\newcommand{\ga}{\gamma}
\newcommand{\acts}{\curvearrowright}
\newcommand{\La}{\Lambda}
\newcommand{\sarpp}{$\mathrm{(SARP}^+\mathrm{)}$\xspace}
\newcommand{\eit}{\end{itemize}}
\newcommand{\mc}[1]{\mathcal{#1}}
\newcommand{\mbb}[1]{\mathbb{#1}}
\newcommand{\mr}[1]{\mathrm{#1}}
\newcommand{\mk}[1]{\mathfrak{#1}}
\newcommand{\wh}[1]{\widehat{#1}}
\newcommand{\mbf}[1]{\mathbf{#1}}
\newcommand{\mtt}[1]{\mathtt{#1}}
\newcommand{\wt}[1]{\widetilde{#1}}
\patchcmd{\BR@backref}{\newblock}{\newblock(page~}{}{}
\patchcmd{\BR@backref}{\par}{)\par}{}{}
\theoremstyle{plain}
\newtheorem{theorem}{Theorem}[section]
\newtheorem*{theorem*}{Theorem}
\newtheorem{proposition}[theorem]{Proposition}
\newtheorem{corollary}[theorem]{Corollary}
\newtheorem{example}[theorem]{Example}
\newtheorem{lemma}[theorem]{Lemma}
\theoremstyle{definition}
\newtheorem{definition}[theorem]{Definition}
\theoremstyle{remark}
\newtheorem{remark}[theorem]{Remark}
\newtheorem{claim}{Claim}[theorem]
\newtheorem{pclaim}{Claim}[theorem]
\newcommand{\defi}{\begin{definition}}
\newcommand{\fdefi}{\end{definition}}
\newcommand{\eje}{\begin{example}}
\newcommand{\feje}{\end{example}}
\newcommand{\ejes}{\begin{ejemplos}}
\newcommand{\fejes}{\end{ejemplos}}
\newcommand{\lema}{\begin{lemma}}
\newcommand{\flema}{\end{lemma}}
\newcommand{\teor}{\begin{theorem}}
\newcommand{\fteor}{\end{theorem}}
\newcommand{\nota}{\begin{remark}}
\newcommand{\fnota}{ \end{remark}}
\newcommand{\clam}{\begin{claim}}
\newcommand{\fclam}{\end{claim}}
\newcommand{\pclam}{\begin{pclaim}}
\newcommand{\fpclam}{\end{pclaim}}
\newcommand{\clams}{\begin{claim*}}
\newcommand{\fclams}{\end{claim*}}
\newcommand{\prop}{\begin{proposition}}
\newcommand{\fprop}{\end{proposition}}
\newcommand{\cor}{\begin{corollary}}
\newcommand{\fcor}{\end{corollary}}
\newcommand{\lclam}{\begin{lclaim}}
\newcommand{\flclam}{\end{lclaim}}
\newcommand{\prucl}{\prue[Proof of Claim:]}
\newcommand{\fprucl}{\fprue}
\newcommand{\ben}{\begin{enumerate}}
\newcommand{\een}{\end{enumerate}}
\newcommand{\bit}{\begin{itemize}}
\newcommand{\N}{\mathbb N}
\newcommand{\R}{\mathbb R}
\DeclareMathOperator{\sma}{<\!}
\DeclareMathOperator{\red}{\mr{red}}
\DeclareMathOperator{\gl}{\mathrm{GL}}
\DeclareMathOperator{\age}{\mathrm{Age}}
\DeclareMathOperator{\ball}{\mathrm{Ball}}
\DeclareMathOperator{\Ball}{\mathrm{Ball}}
\DeclareMathOperator{\flim}{\mathrm{FLim}}
\DeclareMathOperator{\iso}{\mathrm{Iso}}
\DeclareMathOperator{\Emb}{\mathrm{Emb}}
\DeclareMathOperator{\Aut}{\mathrm{Aut}}
\DeclareMathOperator{\gr}{\mathrm{Gr}}
\DeclareMathOperator{\osc}{\mathrm{Osc}}
\newcommand{\rest}{\upharpoonright}
\newcommand{\C}{{\mathbb C}}
\newcommand{\id}{\mr{Id}}
\newcommand{\conj}[2]{ \{ {#1}\,:\,{#2} \} }
\newcommand{\prue}{\begin{proof}}
\newcommand{\fprue}{\end{proof}}
\newcommand{\Ga}{\Gamma}
\newcommand{\al}{\alpha}
\newcommand{\be}{\beta}
\newcommand{\de}{\delta}
\newcommand{\De}{\Delta}
\newcommand{\la}{\lambda}
\newcommand{\ro}{\varrho}
\DeclareMathOperator{\im}{\text{Im}}
\newcommand{\nrm}[1][\cdot]{\|#1\|}
\newcommand{\tnrm}[1][\cdot]{{\left\vert\kern-0.25ex\left\vert\kern-0.25ex\left\vert #1 
    \right\vert\kern-0.25ex\right\vert\kern-0.25ex\right\vert}}\newcommand{\con}{\subseteq}
\newcommand{\vep}{\varepsilon}
\newcommand{\buit}{\emptyset}
\newcommand{\om}{\omega}
\newcommand{\quo}{/\hspace{-0.1cm}/}
\begin{document}
\def\cprime{$'$}

\title[The  Ramsey property for grassmannians over $\R$]{The   Ramsey properties for grassmannians over $\R$, $\C$}
\author[D. Barto\v{s}ov\'{a}]{Dana Barto\v{s}ov\'{a}}
\address{Department of Mathematical Sciences, Carnegie Mellon University,
Pennsylvania, USA}
\email{dbartoso@andrew.cmu.edu}
\author[J. Lopez-Abad]{Jordi Lopez-Abad}
\address{Departamento de Matem\'{a}ticas Fundamentales,
Facultad de Ciencias, UNED, 28040 Madrid, Spain}
\email{abad@mat.uned.es}
\author[M. Lupini]{Martino Lupini}
\address{School of Mathematics and Statistics\\
Victoria University of Wellington\\
PO Box 600\\
Wellington 6140\\
New Zealand}
\address{Mathematics Department\\
California Institute of Technology\\
1200 E. California Blvd\\
MC 253-37\\
Pasadena, CA 91125}
\email{martino.lupini@vuw.ac.nz}
\urladdr{http://www.lupini.org/}
\author[B. Mbombo]{Brice Mbombo}
\address{Department of Mathematics and Statistics, University of Ottawa,
Ottawa, ON, K1N 6N5, Canada}
\email{bmbombod@uottawa.ca}
\subjclass[2000]{Primary 05D10, 37B05; Secondary 05A05, 46B20}
\thanks{D.B. was supported by the grant FAPESP 2013/14458-9. J.L.-A. \ was partially supported by the grant MTM2012-31286 (Spain) and the Fapesp Grant 2013/24827-1 (Brazil). M.L.\ was partially supported by
the NSF Grant DMS-1600186. B. Mbombo was supported by Funda\c{c}\~{a}o de
Amparo \`{a} Pesquisa do Estado de S\~{a}o Paulo (FAPESP) postdoctoral
grant, processo 12/20084-1. This work was initiated during a visit of
J.L.-A.\ to the Universidade de Sao P\~{a}ulo in 2014, and continued during
visits of D.B.\ and J.L.-A. to the Fields Institute in the Fall 2014, a visit
of M.L.\ to the Instituto de Ciencias Matem\'{a}ticas in the Spring 2015, and
a visit of all the authors at the Banff International Research Station in
occasion of the Workshop on Homogeneous Structures in the Fall 2015. The
hospitality of all these institutions is gratefully acknowledged.}
\keywords{Dual Ramsey theorem, Rota's conjecture, approximate Ramsey property, extreme amenability, Fraïssé structures}

\begin{abstract}
 
In this note we study and obtain factorization theorems for
colorings of matrices and Grassmannians over $\mathbb{R}$ and ${\mathbb{C}}$%
, which can be considered metric versions of the Dual Ramsey Theorem
for Boolean matrices and of the Graham-Leeb-Rothschild Theorem for
Grassmannians over a finite field.

\end{abstract}

\maketitle


\section*{Introduction}

One of the most powerful principles in Ramsey theory is  the dual Ramsey theorem of R. L. Graham and B. L. Rothschild \cite{graham_ramseys_1971}. It trivially implies the classical Ramsey theorem or the much more involved Hales-Jewett Theorem. The  Dual Ramsey theorem is the particular instance of the   Rota's conjecture  for Grassmannians over the boolean field $\mbb F_2$, and it indeed implies the   Rota's conjecture for an arbitrary finite field, proved by Graham, Leeb and Rothschild (GLR) in \cite{graham_ramseys_1972}. These statements  can be categorized as a structural Ramsey theorem, the Dual Ramsey as the result   for finite Boolean algebras or for finite dimensional vector spaces over   the boolean field $\mbb F_2$, and  the (GLR) Theorem as  its natural generalization to finite dimensional vector spaces over an arbitrary finite field $\mbb F_p$. In this paper we  study    the  case of the infinite fields $\mbb F=\mbb R,\C$ in its metric form: Suppose that we endow the  $n$-dimensional vector space $\mbb F^n$ with a norm $\mtt m$.  We can naturally identify each $k$-dimensional subspace $V$ of $\mbb F^n$ with its unit ball $\ball(V,\mtt m)$, i.e.,   the centered section of $V$ with the $\ball(\mbb F^n,\mtt m)=\conj{v\in \mbb F^n}{\mtt m(v)\le 1}$. Thus, we can measure the distance between $V$ and $W$ by computing the Hausdorff distance $\La_{\mtt m}$ between the compact and convex sets $\ball(V,\mtt m)$ and $\ball(W,\mtt m)$.  Instead of trying to understand only discrete colorings $c:\gr(k,\mbb F^n)\to r:=\{0,1,\cdots,r-1\}$ we can  now work with 1-Lipschitz mappings, called here compact colorings,  $c:(\gr(k,\mbb F^n),\La_\mtt m)\to (K,d_K)$ into a compact metric space $(K,d_K)$ and ask how the restrictions of $c$ to Grassmannians $\gr(k,V)$  that are {\em congruent} to $\gr(k,\mbb F^m)$ look like. In this context,   a reasonable  notion of congruence  $\gr(k,V)\sim_\mtt m \gr(k,W)$  is  that  $(V,\mtt m)$ and $(W,\mtt m)$ are linearly isometric, or equivalently when there is an affine and symmetric bijection sending the     $(V,V')$-polar of $\ball(V,\mtt m)$ onto the $(W,W')$-polar of $\ball(W,\mtt m)$.  Notice that the set-mapping associated to a linear isometry from $V$ onto $W$ defines a $\La_\mtt m$-isometry from $\gr(k,V)$ onto $\gr(k,W)$. The corresponding quotient $\gr(k, \mbb F^n)/\sim_\mtt m$ is canonically identified with the class $\mc B_k(\mbb F^n,\mtt m)$ of isometric types of $k$-dimensional subspaces of $(\mbb F^n,\mtt m)$, a closed subset of the {\em Banach-Mazur compactum} $\mc B_k$.    In this paper we show that for the $p$-norms $\nrm[(a_j)_j]_p:=(\sum_{j}|a_j|^p)^{1/p}$, if $p\in [1,\infty[\setminus (2\N+4)$, and for the sup norm $\nrm[(a_j)_j]_\infty:=\max_j |a_j|$ we have that on each quotient $\mc B_k(\mbb F^n,\nrm_p)$ there is a compatible  ``Gromov-Hausdorff''-metric $\ga_p$, called here {\em extrinsic metric}, such that for every $k,m\in \N$ every compact metric space $(K,d_K)$ and every $\vep>0$ there is a dimension $n$ such that  for every compact coloring $c:(\gr(k,\mbb F^n),\La_{\nrm_p})\to (K,d_K)$ there is some $V\in \gr(m,\mbb F^n)$  that is ${\nrm_p}$-congruent to $\mbb F^m$ and there is a compact coloring $\wh c:(\mc B_k(\mbb F^n,\nrm_p), \ga_p)\to (K,d_K)$ such that $d_K(\wh{c}([W]_{\sim_\mtt m}), c(W))\le \vep$ for every   $W\in \gr(k,V)$.

In a similar way, we study factorizations of compact colorings of matrices of two kinds: $n\times k$-full rank matrices and $n$-square matrices of rank $k$, denoted by $M_{n,k}^k$ and by $M_n^k$, respectively.  When the field $\mbb F$ is finite, we show that for large enough $n$, for every coloring $c:M_{n,k}^k\to r$ there is some matrix $R\in M_{n,m}^m$ in {\em reduced column echelon form}  and a unique $\widehat{c}: \gl(\mbb F^k)\to r$ such that $c(R\cdot A)= \wh c(\red(A))$ for every $A\in M_{m,k}^k$, where $\mr{red}(A)$ is the $k$-square invertible matrix such that $A\cdot \red (A)$ is in reduced column echelon form.  We prove that  colorings of $M_{n}^k$ are factorized in a similar way by, in addition, using the full rank factorization of matrices.  We then analyze the colorings of these matrices over the fields $\mbb R,\mbb C$, and we compute the corresponding Ramsey factors in the metric context for the $p$-norms.

The proofs for the infinite fields are based on the crucial fact that when $\mtt m$ is a norm on the vector space $\mbb F^\infty$, the space of sequences $(a_n)_n$ with finitely many non-zero entries, have an approximate Ramsey property  called {\em steady approximate Ramsey property},  then there is a unique Banach space $\wh E$ such that $E:=(\mbb F^\infty,\mtt m)$ can be linearly isometrically embedded into $\wh E$,  $\mc B_k(E)$ is dense in $\mc B_k(\wh E)$, and such that the group $\iso(\wh E)$ of linear isometries of $\wh E$, with its strong operator topology, is {\em extremely amenable}, that is, every continuous action of $\iso(\wh E)$ on a compact space has a fixed point.  The corresponding spaces  to the $p$-norms are the Lebesgue space $L_p[0,1]$ if $p<\infty$, and the Gurarij space for the sup-norm. 
 
The use of tools from topological dynamics on a pure approximate Ramsey problem is not accidental. The recent {\em Kechris-Pestov-Todorcevic correspondence} in its discrete and metric versions characterizes the extreme amenability of the automorphism groups of Fraïssé (discrete/metric) structures in terms of the (approximate) Ramsey property of the collection of finitely generated substructures  (see \cite{kechris_fraisse_2005,melleray_extremely_2014,ferenczi_amalgamation_2019}).

The paper is organized as follows. We first study  Ramsey properties of matrices over  $\mbb F_2$ and then over an arbitrary finite field $\mbb F$. In particular, we provide in Theorem    \ref{finitefield}    another  proof of the Rota's conjecture as a straightforward consequence of the Dual Ramsey theorem.  To do this,  we use basic  tools from linear algebra, mainly the reduced column echelon form, that interestingly corresponds to some surjection being {\em rigid} with respect to the antilexicographical ordering, and that determines the Ramsey property (Proposition \ref{io43iuooi4343w44}).  We finish this section by introducing in Proposition \ref{8ret4433344}  a uniqueness principle for these Ramsey factorizations. 
The second section is devoted to the study of Ramsey factorizations   of matrices and Grassmannians over the fields $\R,\C$. We introduce the  main concepts, namely $\vep$-factors, and the Ramsey factors, including the extrinsic metrics, for full rank $n\times k$-matrices, Grassmannians, and $n\times n$-matrices of rank $k$,  and we present our main results in  Theorem \ref{factor_p_full_matrices}, Theorem \ref{factor_p_grass} and  Theorem \ref{factor_p_square_matrices}, respectively.    The third section is devoted to the proofs of the factorization results exposed  in section two. We recall the steady approximate Ramsey property \sarpp of a family of finite dimensional normed spaces  and the extreme amenability of  a topological group. We  explain in Corollary \ref{finitiz1}  when a normed space of the form $E=(\mbb F^\infty,\mtt m)$ has associated a unique   Banach space $\wh E$ that is Fraïssé, has a group of isometries that is extremely amenable, and how that gives Ramsey factors. In Subsection \ref{892sdfwe} we analyze these factors and we prove that they are the ones presented in Section two (Theorem \ref{Canonical_orbit_metrics}).   We finish with an appendix where we analyze the special case of the $\sup$-norm, and we give explicit definitions of   extrinsic metrics.    

\section{The Dual Ramsey Theorem and matrices over finite fields}\label{DRT_Matrices}

To keep the notation unified, let $\mbb F^\infty$ be the vector space over $\mbb F$ consisting of all eventually zero sequences $(a_n)_{n\in \N}$. Let $(u_n)_{n\in \N}$ be the {\em unit basis} of $\mbb F^\infty$, that is, each $u_n$ is the sequence whose only non-zero entry is 1 at the $n^\mr{th}$-coordinate. In this way we   identify $\mbb F^n$ with the subspace $\langle u_j\rangle_{j<n}$ of $\mbb F^\infty$, and then $\mbb F^\infty$ with the increasing union of all $\mbb F^n$.

Given $\al,\be\in \N\cup \{\infty\}$, let $M_{\al, \be}(\mbb F)$ be the collection of $\al \times \be$-matrices  with finitely many non-zero entries. In a similar manner as before, $M_{\al,\be}(\mbb F)= \bigcup_{ n\le \al, m\le \be} M_{n,m}(\mbb F)$, increasing union.  Let ${M}_{\al, \be}^k(\mathbb{F})$ be the set of all  $\al\times \be$-matrices of rank $k$ with entries
in $\mathbb{F}$
 To lighten the notation, when there  is no possible confusion, we will write $M_{\al,\be}$, $M_{\al,\be}^k$,... to denote $M_{\al,\be}(\mbb F)$, $M_{\al,\be}^k(\mbb F)$,...

There are several equivalent ways to present the dual Ramsey theorem (DRT) of Graham and Rothschild \cite{graham_ramseys_1971}. Among these, there is a factorization result for \emph{Boolean matrices} stated below as Theorem \ref{Booleanmatrices}. Motivated by this, we study  Ramsey-theoretical factorization results for colorings of other classes of matrices. We begin with matrices with entries in a finite field, and then conclude, in the next section, with matrices over $\mathbb{R}$ or ${\mathbb{C}}$.

It is well known, for example using the Gauss-Jordan elimination method, that an  $n\times m$-matrix $A$  has a unique decomposition $A=\mr{red}(A)\cdot \tau(A)$ where $\mr{red}(A)$    is in reduced column echelon   form and $\tau(A)$ is an invertible $m\times m$-matrix. We prove that when the field is finite any finite coloring of matrices over a finite field is determined, in a precise way, by $\tau$. This  can be seen as an extension  of the well known result of Graham, Leeb, and Rothschild on Grassmannians over a finite field \cite{graham_ramseys_1972}.   
\defi[Factors]
Let $X$ be a set and $r\in \N$. An \emph{$r$-coloring} of $X$ is a mapping $c: X\to r=\{0,1\dots,r-1\}$. A subset $Y$ of $X$ is \emph{$c$-monochromatic} if $c$ is constant on $Y$.  We say that  a mapping $\pi:X\to K$ is a \emph{factor} of  $c: X\to r$  if there is some $\widetilde{c}: K\to r$ such that $c= \widetilde c \circ \pi$.  Finally, $\pi$ is a \emph{factor of $c$ in $Y\con X$} if $\pi\upharpoonright _{Y}$ is a factor of $c\upharpoonright _{Y}$. So, $Y$ is $c$-monochromatic  when  the trivial constant map $\pi:X\to \{0\}=1$ is a factor of $c$  in $Y$.   
\fdefi

We now  recall the \emph{Dual Ramsey Theorem }(DRT) of Graham and
Rothschild \cite{graham_ramseys_1971} (see also  \cite{nesetril_handbook}, \cite{todorcevic_2010}). For convenience, we present its
formulation in terms of rigid surjections between finite linear orderings.
Given two linear orderings $(R,<_{R})$ and $(S,<_{S})$, a surjective map $%
f:R\rightarrow S$ is called a \emph{rigid surjection} when $\min
f^{-1}(s_{0})<_R\min f^{-1}(s_{1})$ for every $s_{0},s_{1}\in S$ such that $%
s_{0}<_{S}s_{1}$. We let $\mathrm{Epi}(R,S)$ be the collection of rigid
surjections from $R$ to $S$.

\begin{theorem}[Graham--Rothschild]
\label{Theorem:DLT-rigid}For every finite linear orderings $R$ and $S$ such
that $\#R<\#S$ and every $r\in \mathbb{N}$ there exists an integer $n>\#S$
such that, considering $n $ naturally ordered, every $r$-coloring of $%
\mathrm{Epi}(n,R)$ has a monochromatic set of the form $\mathrm{Epi}%
(S,R)\circ \gamma =\{{\sigma \circ \gamma }\,:\,{\sigma \in \mathrm{Epi}(S,R)%
}\}$ for some $\gamma \in \mathrm{Epi}(n,S)$.
\end{theorem}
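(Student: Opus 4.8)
Since this is the Dual Ramsey Theorem of Graham and Rothschild, the intended move is simply to invoke \cite{graham_ramseys_1971}. If one wanted instead to prove it from first principles, here is the route I would take. The first step is to recast the statement in the language of parameter words: writing $k=\#R$, a rigid surjection $f\colon n\to R$ is the same datum as an ordered partition of $\{0,\dots,n-1\}$ into $k$ nonempty blocks listed by increasing minimum, equivalently a word of length $n$ over the variable alphabet $\{\lambda_0,\dots,\lambda_{k-1}\}$ in which each $\lambda_i$ occurs and the first occurrence of $\lambda_i$ precedes that of $\lambda_j$ whenever $i<j$. Under this dictionary, composition of rigid surjections becomes substitution of one parameter word into another, and a set of the form $\mathrm{Epi}(S,R)\circ\gamma$ becomes the family of all $k$-parameter subwords of a fixed $(\#S)$-parameter word $\gamma$. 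So the statement is precisely the empty-alphabet instance of the Graham--Rothschild $n$-parameter set theorem, and the plan is to establish that theorem --- over all finite alphabets, which is what the induction there requires --- and then specialize to the empty one.

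The engine is the Hales--Jewett theorem: for every finite alphabet $\Lambda$, every $r$, and every $d$ there is $N$ such that every $r$-coloring of $\Lambda^N$ admits a monochromatic $d$-dimensional combinatorial subspace, the higher-dimensional form being obtained from the combinatorial-line form by a standard product argument. From Hales--Jewett one would derive the parameter set theorem by the Graham--Rothschild induction on the parameters: one embeds the sought monochromatic family into a suitably structured product of copies of the colored object, applies Hales--Jewett coordinate by coordinate while respecting a fixed partition of the coordinate set, and projects back --- this is the partite amalgamation step. Finally one takes $n=\max\{\#S+1,\,n_0\}$, where $n_0$ is the bound produced by this machinery, and reads off the conclusion; note that $\mathrm{Epi}(S,R)\circ\gamma\subseteq\mathrm{Epi}(n,R)$ since a composition of rigid surjections is again a rigid surjection, so the monochromatic set does lie in the domain $\mathrm{Epi}(n,R)$ of the coloring.

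The hard part is the passage from the unconstrained combinatorial subspaces that Hales--Jewett produces inside $\Lambda^N$ to the rigidly structured families $\mathrm{Epi}(S,R)\circ\gamma$ required here: Hales--Jewett governs colorings of all of $\Lambda^N$ but says nothing about surjectivity or about the order of first occurrences of the variables, and forcing exactly these constraints is the whole point of the partite amalgamation argument. Everything else --- the translation to parameter words, the product argument upgrading lines to subspaces, and the bookkeeping in the choice of $n$ --- is routine. As the result is classical, none of this is carried out here and we cite \cite{graham_ramseys_1971}.
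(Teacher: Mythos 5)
Your proposal is correct and matches the paper's treatment: the paper states this result as the classical Dual Ramsey Theorem and simply cites \cite{graham_ramseys_1971}, giving no proof of its own. Your accompanying sketch (rigid surjections as parameter words over the empty alphabet, with the Graham--Rothschild parameter set theorem derived from Hales--Jewett) is an accurate description of the standard route, but it is not needed here since the citation suffices.
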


\subsection{Ramsey properties of colorings of Boolean matrices}
	
	Perhaps the most common formulation of the {\em dual Ramsey Theorem} of Graham and
Rothschild  is done in terms of partitions. Given $k,m,n\in \mathbb{N}$, let $\mathcal{E}_m(n)$ be the set of all partitions of $n$ into $m$ pieces. Given   $\mathcal{P}\in  \mathcal{E}_m(n)$, let $\langle \mathcal{P}\rangle_k$ be  the set of all partitions $\mathcal{Q}$ of $n$ with $k$ pieces that are coarser than $\mathcal{P}$, i.e., such that each piece of $\mathcal{Q}$ is a union of pieces of $\mathcal{P}$.

\begin{theorem*}[DRT, partitions version]
For every $k,m\in \mathbb{N}$ and $r\in \mathbb{N}$ there is  $n\in \N$ such that every $r$-coloring of $\mathcal{E}_k(n)$ has a
monochromatic set of the form $\langle \mathcal{P}\rangle_k$ for some $\mathcal{P}\in \mathcal{E}_m(n)$.
\end{theorem*}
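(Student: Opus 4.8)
The plan is to deduce this ``partitions version'' directly from the rigid-surjection form of the Dual Ramsey Theorem (Theorem~\ref{Theorem:DLT-rigid}) by setting up a dictionary between partitions and rigid surjections. First I would fix the standard bijection between $\mathcal{E}_k(n)$ and $\mathrm{Epi}(n,k)$, where $k=\{0,1,\dots,k-1\}$ is naturally ordered: to a partition $\mathcal{P}=\{P_0,\dots,P_{k-1}\}$ whose blocks are listed in increasing order of their least elements, associate the surjection $f_{\mathcal{P}}\colon n\to k$ with $f_{\mathcal{P}}^{-1}(i)=P_i$; then $\min f_{\mathcal{P}}^{-1}(i)=\min P_i$ is strictly increasing in $i$, so $f_{\mathcal{P}}$ is rigid, and conversely every rigid surjection $n\to k$ arises this way from the partition into its fibers. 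The case $k=m$ is trivial (then $\langle\mathcal{P}\rangle_k=\{\mathcal{P}\}$ and one just needs $n\ge m$), and the case $k>m$ is vacuous, so we may assume $k<m$, which is precisely the hypothesis $\#R<\#S$ of Theorem~\ref{Theorem:DLT-rigid}.

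Next I would translate the coarsening relation. Given $\mathcal{P}\in\mathcal{E}_m(n)$ with blocks $P_0,\dots,P_{m-1}$ ordered by least elements, a partition $\mathcal{Q}\in\mathcal{E}_k(n)$ is coarser than $\mathcal{P}$ exactly when there is a unique surjection $\sigma\colon m\to k$ with $\mathcal{Q}$-blocks $Q_i=\bigcup_{\sigma(j)=i}P_j$. The key point — and the only place where anything needs to be checked — is that such a $\sigma$ is automatically \emph{rigid}: since $\min P_j$ increases with $j$, we have $\min Q_i=\min P_{\min\sigma^{-1}(i)}$, and as the blocks $Q_i$ are themselves listed by increasing least elements it follows that $\min\sigma^{-1}(i)$ is strictly increasing in $i$. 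Conversely, the composite of two rigid surjections is rigid (for $i_0<i_1$ one has $\min(\sigma\circ f)^{-1}(i_t)=\min f^{-1}(\min\sigma^{-1}(i_t))$ and applies monotonicity twice), and $f_{\mathcal{Q}}=\sigma\circ f_{\mathcal{P}}$ whenever $\mathcal{Q}$ is obtained from $\mathcal{P}$ via $\sigma$. Hence the bijection $\mathcal{E}_k(n)\leftrightarrow\mathrm{Epi}(n,k)$ restricts to a bijection between $\langle\mathcal{P}\rangle_k$ and $\mathrm{Epi}(m,k)\circ f_{\mathcal{P}}=\{\sigma\circ f_{\mathcal{P}}:\sigma\in\mathrm{Epi}(m,k)\}$.

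Finally I would apply Theorem~\ref{Theorem:DLT-rigid} with $R=k$ and $S=m$: it yields an $n$ such that any $r$-coloring of $\mathrm{Epi}(n,k)$ admits a monochromatic set of the form $\mathrm{Epi}(m,k)\circ\gamma$ for some $\gamma\in\mathrm{Epi}(n,m)$. Transporting a given $r$-coloring of $\mathcal{E}_k(n)$ across the dictionary, pulling it back to $\mathrm{Epi}(n,k)$, invoking the theorem, and pushing the resulting $\gamma$ forward to the partition $\mathcal{P}\in\mathcal{E}_m(n)$ with $f_{\mathcal{P}}=\gamma$ produces the desired monochromatic $\langle\mathcal{P}\rangle_k$. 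I do not anticipate a genuine obstacle: the entire content is the verification that coarsenings of $\mathcal{P}$ correspond exactly to post-compositions of $f_{\mathcal{P}}$ by rigid surjections, which the two monotonicity computations above settle.
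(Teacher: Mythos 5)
Your proposal is correct. Note that the paper itself offers no proof of this statement: the partitions version is listed as one of several standard, mutually equivalent formulations of the Graham--Rothschild Dual Ramsey Theorem, alongside the rigid-surjection version (Theorem~\ref{Theorem:DLT-rigid}) which is the form actually used later. Your argument supplies exactly the standard dictionary underlying that equivalence: identifying a $k$-block partition of $n$ with the rigid surjection onto $k$ obtained by listing blocks in increasing order of their minima, and checking that coarsenings of $\mathcal{P}\in\mathcal{E}_m(n)$ correspond precisely to post-compositions $\sigma\circ f_{\mathcal{P}}$ with $\sigma\in\mathrm{Epi}(m,k)$. The two monotonicity verifications (that the induced $\sigma$ is rigid because $\min Q_i=\min P_{\min\sigma^{-1}(i)}$, and that composites of rigid surjections are rigid) are the whole content, and you carry them out correctly; the handling of the degenerate cases $k=m$ and $k>m$, needed because Theorem~\ref{Theorem:DLT-rigid} is stated under $\#R<\#S$, is also fine. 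So the proposal is a complete and correct proof along the route the paper implicitly takes for granted.
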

 The following three reformulations of the Dual Ramsey Theorem are \emph{structural Ramsey results} for finite Boolean
algebras.

\begin{theorem*}[DRT, Boolean algebras]
Let $\mathcal{A}$ and $\mathcal{B}$ be finite Boolean algebras, and let $r\in \mathbb{N}$. Then there exists a finite Boolean algebra $\mathcal{C}$
such that every $r$-coloring of the set $\binom{\mathcal{C}}{\mathcal{A}}$
of isomorphic copies of $\mathcal{A}$ inside $\mathcal{C}$ admits a
monochromatic set of the form $\binom{\mathcal{B}_{0}}{\mathcal{A}}$ for
some $\mathcal{B}_{0}\in \binom{\mathcal{C}}{\mathcal{B}}$.
\end{theorem*}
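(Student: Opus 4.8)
The plan is to deduce this statement from the partitions version of the Dual Ramsey Theorem recalled just above, by translating ``isomorphic copies of a Boolean algebra'' into ``partitions'' via finite Stone duality. First I would set up the dictionary. Every finite Boolean algebra is isomorphic to $\mathcal{P}(X)$ for $X$ its (finite) set of atoms, so finite Boolean algebras are classified up to isomorphism by their number of atoms. If $B$ is a subalgebra of $\mathcal{P}(X)$, then (using that a subalgebra contains the top element $X$) the atoms of $B$ are pairwise disjoint nonempty subsets of $X$ whose union is $X$, i.e.\ they form a partition of $X$; conversely every partition of $X$ generates such a subalgebra, and $B$ is determined by, and is isomorphic to $\mathcal{P}(\cdot)$ applied to, this partition. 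Contravariantly this is just the statement that an embedding $\mathcal{P}(Y)\hookrightarrow\mathcal{P}(X)$ is exactly $f^{-1}$ for a surjection $f\colon X\twoheadrightarrow Y$, a copy being such an $f$ modulo relabelling of $Y$. Hence, writing $k$ for the number of atoms of $\mathcal{A}$ and $m$ for that of $\mathcal{B}$, and taking $\mathcal{C}:=\mathcal{P}(\{0,\dots,n-1\})$ for a suitable $n$, we obtain bijections $\binom{\mathcal{C}}{\mathcal{A}}\cong\mathcal{E}_{k}(n)$ and $\binom{\mathcal{C}}{\mathcal{B}}\cong\mathcal{E}_{m}(n)$.

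Next I would check that this dictionary respects containment the right way. If $\mathcal{B}_{0}\in\binom{\mathcal{C}}{\mathcal{B}}$ corresponds to $\mathcal{P}\in\mathcal{E}_{m}(n)$, so that the atoms of $\mathcal{B}_{0}$ are the blocks of $\mathcal{P}$, then a copy $\mathcal{A}_{0}$ of $\mathcal{A}$ is contained in $\mathcal{B}_{0}$ if and only if every atom of $\mathcal{A}_{0}$ is a join of atoms of $\mathcal{B}_{0}$, i.e.\ a union of blocks of $\mathcal{P}$; equivalently, the partition of $n$ associated with $\mathcal{A}_{0}$ is coarser than $\mathcal{P}$. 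So the bijection above restricts to $\binom{\mathcal{B}_{0}}{\mathcal{A}}\cong\langle\mathcal{P}\rangle_{k}$.

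With the dictionary in place the argument is immediate. We may assume $k\le m$: if $k>m$ then no subalgebra of a copy of $\mathcal{B}$ is isomorphic to $\mathcal{A}$, so $\binom{\mathcal{B}_{0}}{\mathcal{A}}=\emptyset$ for every $\mathcal{B}_{0}\in\binom{\mathcal{C}}{\mathcal{B}}$ and any $\mathcal{C}$ with at least $m$ atoms works vacuously. Given $\mathcal{A}$, $\mathcal{B}$ and $r$, apply the partitions version of the Dual Ramsey Theorem to $k$, $m$, $r$ to obtain $n\in\N$ (with $n\ge m$), and set $\mathcal{C}:=\mathcal{P}(\{0,\dots,n-1\})$, a finite Boolean algebra with $n$ atoms. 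Any $r$-coloring $c$ of $\binom{\mathcal{C}}{\mathcal{A}}$ pulls back along the first bijection to an $r$-coloring of $\mathcal{E}_{k}(n)$; the Dual Ramsey Theorem yields $\mathcal{P}\in\mathcal{E}_{m}(n)$ with $\langle\mathcal{P}\rangle_{k}$ monochromatic; pushing forward, the copy $\mathcal{B}_{0}\in\binom{\mathcal{C}}{\mathcal{B}}$ corresponding to $\mathcal{P}$ has $\binom{\mathcal{B}_{0}}{\mathcal{A}}$ monochromatic for $c$.

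I do not expect a genuine obstacle: all the Ramsey-theoretic content is in the Dual Ramsey Theorem, which is already available to us, and what remains is pure bookkeeping. The two points deserving a little care are the passage between subalgebras of $\mathcal{C}$ and abstract isomorphic copies (i.e.\ quotienting embeddings by $\Aut(\mathcal{A})$, resp.\ $\Aut(\mathcal{B})$), and verifying that finite Stone duality matches atoms, subalgebra containment and the refinement order exactly. If one preferred not to cite the partitions version and to argue directly from Theorem~\ref{Theorem:DLT-rigid} instead, the only additional observation needed is that a rigid surjection from $\{0,\dots,n-1\}$ onto a finite linear order $R$ is precisely a canonical representative (order the blocks by their least elements) of an unordered partition of $n$ into $\#R$ blocks; combined with the dictionary above, this converts Theorem~\ref{Theorem:DLT-rigid} into exactly the statement claimed.
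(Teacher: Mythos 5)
Your argument is correct. Note that the paper does not actually prove this formulation: it lists it, together with the partitions version and the canonically ordered version, as standard equivalent forms of the Graham--Rothschild theorem, and only proves the later matrix version (Theorem \ref{Booleanmatrices}) from the ordered form. What you supply is precisely the standard translation that this equivalence rests on, and it is sound: subalgebras of $\mathcal{P}(n)$ with $k$ atoms correspond to partitions in $\mathcal{E}_k(n)$ (copies being embeddings modulo relabelling of atoms, which you handle correctly), containment of subalgebras corresponds to the coarsening relation, so $\binom{\mathcal{B}_0}{\mathcal{A}}$ is carried onto $\langle\mathcal{P}\rangle_k$, and the partitions version of the Dual Ramsey Theorem then yields the claim, with the degenerate case $k>m$ holding vacuously as you observe. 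Your closing remark about deriving the partitions version directly from Theorem \ref{Theorem:DLT-rigid} (rigid surjections as canonical representatives of partitions, ordering blocks by least elements) is also the standard bridge and is consistent with how the paper uses rigid surjections elsewhere.
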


Let $\mathcal{A}$ be a finite Boolean algebra.  Any  $a\in \mc A$  is represented as
\begin{equation*}
a=\bigvee_{x\in \Gamma _{a}}x,
\end{equation*}
for a unique set of atoms $\Gamma _{a}$. So, any linear ordering $<$ on the sets of atoms $\mathrm{At}(\mathcal{A})$ of $\mc A$ \emph{extends} to $\mathcal{A}$ by defining $a<b$ iff $\min_{<}(\Gamma _{a}\triangle \Gamma _{b})\in \Gamma _{a}$. Following  \cite{kechris_fraisse_2005}, we will say that $(\mathcal{A},<)$ is a \emph{canonically ordered (c.o.)} Boolean algebra. Given c.o. Boolean algebras $\mathcal{A}$ and $\mathcal{B}$, let $\mathrm{Emb}_{<}(\mathcal{A},\mathcal{B})$ be the collection of ordering-preserving embeddings from $\mathcal{A}$ into $\mathcal{B}$, respectively.

\begin{theorem}[DRT, canonically ordered Boolean algebras]
\label{Theorem:DRT-ordered-Boolean}Given c.o.   Boolean algebras $\mathcal{A}$ and $\mathcal{B}$ and    $r\in \N$, there is a c.o. Boolean algebra $\mathcal{C}$ such that  each $r$-coloring  of $\mathrm{Emb}_{<}(\mathcal{A},\mathcal{C})$ has a monochromatic set of the form  $\varrho \circ \mathrm{Emb}_{<}(\mathcal{A},\mathcal{B})$    for some $\varrho \in \mathrm{Emb}_{<}(\mathcal{B},\mathcal{C})$.
\end{theorem}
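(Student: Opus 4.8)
The plan is to reduce the statement to the Graham--Rothschild Theorem \ref{Theorem:DLT-rigid} through the standard Stone duality between finite Boolean algebras and finite sets, made sensitive to the canonical orderings. First I would set up the dictionary. A finite Boolean algebra $\mc A$ is determined up to isomorphism by $k:=\#\mr{At}(\mc A)$, and fixing a linear ordering $<$ on $\mr{At}(\mc A)$ amounts to identifying $(\mr{At}(\mc A),<)$ with the naturally ordered set $k=\{0,\dots,k-1\}$, the induced order on $\mc A$ being the one recalled before the statement. Given Boolean algebras $\mc A,\mc C$, every homomorphism $e:\mc A\to\mc C$ induces a map $\wh e:\mr{At}(\mc C)\to\mr{At}(\mc A)$ sending an atom $c$ to the unique atom $a$ of $\mc A$ with $c\le e(a)$ (existence and uniqueness follow from $\bigvee_{a}e(a)=e(1)=1$ and $e(a\wedge a')=e(a)\wedge e(a')=0$ for distinct atoms); one recovers $e$ from $\wh e$ via $\Gamma_{e(x)}=\wh e^{-1}(\Gamma_x)$, the assignment $e\mapsto\wh e$ is a contravariant bijection between $\mr{Emb}(\mc A,\mc C)$ and the set of surjections $\mr{At}(\mc C)\to\mr{At}(\mc A)$ ($e$ injective iff $\wh e$ surjective), and $\widehat{e'\circ e}=\wh e\circ\wh{e'}$.

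Second, and this is the heart of the matter, I would verify that $e$ is order-preserving for the canonical orders precisely when $\wh e$ is a \emph{rigid} surjection. On atoms this is immediate: for $a<a'$ in $\mc A$ one has $\Gamma_{e(a)}=\wh e^{-1}(a)$ and $\Gamma_{e(a')}=\wh e^{-1}(a')$, so $e(a)<_{\mc C}e(a')$ unwinds to $\min\wh e^{-1}(a)<_{\mc C}\min\wh e^{-1}(a')$, which is rigidity. To upgrade to all of $\mc A$, take $x<y$, so $a_0:=\min_{<_{\mc A}}(\Gamma_x\triangle\Gamma_y)\in\Gamma_x$; since preimages commute with symmetric difference, $\Gamma_{e(x)}\triangle\Gamma_{e(y)}=\wh e^{-1}(\Gamma_x\triangle\Gamma_y)$, and I claim its $<_{\mc C}$-least element $c^{*}$ satisfies $\wh e(c^{*})\in\Gamma_x$. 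Indeed, if $\wh e(c^{*})\in(\Gamma_x\triangle\Gamma_y)\setminus\Gamma_x$ then $a_0<_{\mc A}\wh e(c^{*})$, so rigidity gives $\min\wh e^{-1}(a_0)<_{\mc C}\min\wh e^{-1}(\wh e(c^{*}))\le_{\mc C}c^{*}$; but $\wh e^{-1}(a_0)\subseteq\wh e^{-1}(\Gamma_x\triangle\Gamma_y)$ forces $c^{*}\le_{\mc C}\min\wh e^{-1}(a_0)$, a contradiction. Hence $\min_{<_{\mc C}}(\Gamma_{e(x)}\triangle\Gamma_{e(y)})=c^{*}\in\wh e^{-1}(\Gamma_x)=\Gamma_{e(x)}$, i.e.\ $e(x)<_{\mc C}e(y)$. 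Thus $\mr{Emb}_<(\mc A,\mc C)$ is in contravariant bijection with $\mr{Epi}(\#\mr{At}(\mc C),\#\mr{At}(\mc A))$, compatibly with composition.

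Finally I would feed the Graham--Rothschild theorem through this dictionary. Put $k=\#\mr{At}(\mc A)$ and $\ell=\#\mr{At}(\mc B)$. If $\ell<k$ there are no order-embeddings $\mc A\to\mc B$, and $\mc C=\mc B$ works with the (trivially monochromatic) empty set; if $\ell=k$ then $\mr{Emb}_<(\mc A,\mc B)$ is a singleton and again $\mc C=\mc B$ works. If $k<\ell$, apply Theorem \ref{Theorem:DLT-rigid} with $R=k$, $S=\ell$ and the given $r$ to get $n>\ell$, and let $\mc C$ be the canonically ordered Boolean algebra with $n$ atoms. An $r$-coloring $c$ of $\mr{Emb}_<(\mc A,\mc C)$ transports along $e\mapsto\wh e$ to an $r$-coloring of $\mr{Epi}(n,k)$, which by the theorem has a monochromatic set $\mr{Epi}(\ell,k)\circ\gamma$ for some $\gamma\in\mr{Epi}(n,\ell)$. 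Let $\varrho\in\mr{Emb}_<(\mc B,\mc C)$ be the order-embedding with $\wh\varrho=\gamma$. Since $\widehat{\varrho\circ e}=\wh e\circ\wh\varrho$ and $\wh e$ ranges over $\mr{Epi}(\ell,k)$ as $e$ ranges over $\mr{Emb}_<(\mc A,\mc B)$, we get $\{\widehat{\varrho\circ e}:e\in\mr{Emb}_<(\mc A,\mc B)\}=\mr{Epi}(\ell,k)\circ\gamma$, so $\varrho\circ\mr{Emb}_<(\mc A,\mc B)$ is $c$-monochromatic, as required.

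I expect the only real obstacle to be the second paragraph — showing that order-preservation of a Boolean-algebra embedding is exactly rigidity of the induced surjection of atom sets — since that is where the combinatorial content of the canonical ordering meets the definition of rigid surjection; the rest is bookkeeping with finite Stone duality and an application of Theorem \ref{Theorem:DLT-rigid}.
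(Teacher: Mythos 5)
Your proof is correct, and all the pieces are in place: the finite Stone-duality dictionary (with $\Gamma_{e(x)}=\wh e^{-1}(\Gamma_x)$, injectivity of $e$ corresponding to surjectivity of $\wh e$, and contravariance of composition), the equivalence ``$e$ order-preserving $\Leftrightarrow$ $\wh e$ rigid'' (your chain: order-preservation on all of $\mc A$ trivially gives it on atoms, which is literally rigidity of $\wh e$, which by your minimality argument with $c^*$ and $a_0=\min_{<_{\mc A}}(\Gamma_x\triangle\Gamma_y)$ gives order-preservation on all of $\mc A$), the degenerate cases $\#\mr{At}(\mc B)\le\#\mr{At}(\mc A)$, and the transport of the coloring through the bijection so that Theorem \ref{Theorem:DLT-rigid} applies. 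For comparison: the paper does not prove this statement at all --- it lists it, alongside the partition and Boolean-algebra versions, as a known reformulation of the Dual Ramsey Theorem of Graham and Rothschild (with references), and then uses it as a black box in the proof of Theorem \ref{Booleanmatrices}. So there is no argument in the paper to diverge from; your derivation from the rigid-surjection formulation via the correspondence between order-embeddings of canonically ordered Boolean algebras and rigid surjections of their atom sets is exactly the standard bridge the paper implicitly relies on (note the paper orders $\mc P(k)$ by $s<t$ iff $\min(s\triangle t)\in s$, which is the atom-level shadow of your second paragraph), and your write-up supplies the verification the paper leaves to the citations.
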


Suppose that $\mathcal{A}$ and $\mathcal{B}$ are finite Boolean algebras
with $k$ and $n$ atoms, respectively. Any   embedding from $\mathcal{A}$ to $\mathcal{B}$ has a corresponding \emph{representing }$n\times k$ matrix with entries in $\left\{ 0,1\right\} $. We call the matrices arising in this fashion \emph{Boolean matrices}. The set of $n\times k$
Boolean matrices will be denoted by ${M}_{n, k}^{\mr{ba}}$, i.e., the set of $n\times k$ matrix with entries in $\left\{
0,1\right\} $ whose columns (which can be identified with subsets of $n$)
form a $k$-partition of $n$. We let ${M}_{n,k}^{\mr{oba}}$ be the set of Boolean $n\times k$-matrices
that correspond to order-preserving embeddings between c.o. Boolean algebras. These are precisely the set of Boolean matrices
whose columns $( P_{i}) _{i\in k}$ furthermore satisfy $\min
P_{i}<\min P_{i+1}$ for $i< k-1$.

In the following, we identify a permutation $\sigma $ of $k$ with the
associated $k\times k$ permutation matrix. This allows one to identify the
group $\mathcal{S}_{k}$ of permutations of $k$ with a group of unitary
matrices. Let $\pi :{M}_{n,k}^{\text{ba}}\rightarrow \mathcal{S}_{k}$ be the function assigning to a matrix $A$ the unique element $\pi (A)
$ of $\mathcal{S}_{k}$ such that $A=A_< \cdot \pi (A)$ for some (uniquely determined) matrix $A_<\in M_{n,k}^{\mathrm{oba}}$. Given an $n\times m$-matrix $A$, we let  $A\cdot {M}_{m, k}^{\text{ba}}=\{{A\cdot B}\,:\, {B\in {M}_{m, k}^{\text{ba}}}\}$.

\begin{theorem}[DRT, Boolean matrices]
\label{Booleanmatrices} For every $k,m \in \N$ and $r\in \N$ there is $n$ such that
for every $c:{M}_{n, k}^\mathrm{ba}\to r$ there is $R\in
{M}_{n, m}^\mathrm{oba}$ such that  $\pi$ is a factor of $c$ in   $R \cdot  M_{m,k}^\mathrm{ba}$.
That is, the color of $R\cdot B$ depends only on $\pi( B) = \pi (R\cdot B)$ for every $B\in
{M}_{m, k}^{\mathrm{ba}}$.
\end{theorem}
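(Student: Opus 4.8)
The plan is to deduce the statement from Theorem \ref{Theorem:DRT-ordered-Boolean} (the canonically ordered Boolean algebras version of DRT) by translating everything into the language of embeddings of c.o.\ Boolean algebras and then tracking the bookkeeping done by the map $\pi$. First I would fix finite c.o.\ Boolean algebras $\mc A$ and $\mc B$ with $k$ and $m$ atoms respectively, apply Theorem \ref{Theorem:DRT-ordered-Boolean} with parameter $r$ to obtain a c.o.\ Boolean algebra $\mc C$, and let $n$ be its number of atoms. The key dictionary is: an arbitrary embedding $\mc A\to\mc C$ corresponds to a Boolean matrix $A\in M_{n,k}^{\mr{ba}}$, an order-preserving embedding corresponds to a matrix in $M_{n,k}^{\mr{oba}}$, and the factorization $A = A_< \cdot \pi(A)$ with $A_<\in M_{n,k}^{\mr{oba}}$ and $\pi(A)\in\mc S_k$ is exactly the statement that every embedding of $\mc A$ into $\mc C$ factors uniquely as an order-preserving embedding precomposed with a permutation of the atoms of $\mc A$ (the permutation being forced by the requirement that the columns of $A_<$ be listed with increasing minima). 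Composition of embeddings corresponds to matrix multiplication, so $\mr{Emb}_<(\mc A,\mc B)\circ$ precomposition by permutations ranges over all of $\mr{Emb}(\mc A,\mc B)$, and for $\vphi\in\mr{Emb}(\mc A,\mc B)$ and $\psi\in\mr{Emb}_<(\mc B,\mc C)$ represented by $R\in M_{n,m}^{\mr{oba}}$ we have that $\psi\circ\vphi$ is represented by $R\cdot B$ where $B$ represents $\vphi$.

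Next I would feed the given coloring $c\colon M_{n,k}^{\mr{ba}}\to r$ into Theorem \ref{Theorem:DRT-ordered-Boolean} after restricting attention to the order-preserving part. More precisely, define an auxiliary $r$-coloring $\wt c$ on $\mr{Emb}_<(\mc A,\mc C)$ by $\wt c(\eta) := c(\text{matrix of }\eta)$; note $\mr{Emb}_<(\mc A,\mc C)\cong M_{n,k}^{\mr{oba}}$. Applying Theorem \ref{Theorem:DRT-ordered-Boolean} gives $\varrho\in\mr{Emb}_<(\mc B,\mc C)$, say with representing matrix $R\in M_{n,m}^{\mr{oba}}$, such that $\wt c$ is constant on $\varrho\circ\mr{Emb}_<(\mc A,\mc B)$. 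Translating back: $c$ is constant on $R\cdot M_{m,k}^{\mr{oba}}$. The remaining and central step is to upgrade this monochromaticity on the order-preserving matrices $M_{m,k}^{\mr{oba}}$ to the desired factorization statement on all of $R\cdot M_{m,k}^{\mr{ba}}$. Here I would argue as follows: any $B\in M_{m,k}^{\mr{ba}}$ factors as $B = B_< \cdot \pi(B)$ with $B_<\in M_{m,k}^{\mr{oba}}$, hence $R\cdot B = (R\cdot B_<)\cdot \pi(B)$; since $R\in M_{n,m}^{\mr{oba}}$ and $B_<\in M_{m,k}^{\mr{oba}}$ one checks that $R\cdot B_<\in M_{n,k}^{\mr{oba}}$ (the product of order-preserving Boolean matrices is again order-preserving — this is just that the composition of order-preserving embeddings is order preserving), and therefore $(R\cdot B_<)$ is its own canonical order-preserving factor, i.e.\ $(R\cdot B)_< = R\cdot B_<$ and $\pi(R\cdot B) = \pi(B)$. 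Thus the color $c(R\cdot B)$ depends only on $R\cdot B_<\in R\cdot M_{m,k}^{\mr{oba}}$ through $c$, but $c$ was shown constant on that set — wait, that would give monochromaticity, not a nontrivial factor. I need to be more careful: the correct auxiliary coloring must be indexed so that permutations are \emph{not} collapsed.

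The fix, which I expect to be the main obstacle and the heart of the argument, is to choose $\mc A$ to have $k+1$ atoms rather than $k$, or more cleanly, to apply the ordered DRT not to copies of $\mc A$ but to copies of the ``$\mc A$ together with a linear order on a set of size $k$'' — equivalently, to run the ordered DRT with target algebra having $k$ atoms but coloring the larger set $M_{n,k}^{\mr{ba}}\cong \mr{Emb}(\mc A,\mc C)$ directly. Concretely: define $\wt c\colon \mr{Emb}_<(\mc A', \mc C)\to r$ where $\mc A'$ is a c.o.\ Boolean algebra with $k$ atoms but we record, via a rigid-surjection encoding as in Theorem \ref{Theorem:DLT-rigid}, both the order-preserving embedding and the permutation $\pi$; the DRT then produces $R$ such that for all $B\in M_{m,k}^{\mr{ba}}$, $c(R\cdot B)$ is a function of $\pi(R\cdot B)=\pi(B)$ alone. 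The cleanest route is probably to invoke the rigid-surjection form (Theorem \ref{Theorem:DLT-rigid}) with $R = k$, $S = m$ suitably padded so that a rigid surjection $n\to k$ encodes a Boolean matrix in $M_{n,k}^{\mr{ba}}$ (columns $=$ fibers) and the composition structure matches; the monochromatic set $\mr{Epi}(S,R)\circ\ga$ then corresponds exactly to $R\cdot M_{m,k}^{\mr{ba}}$ with the $\pi$-coordinate surviving because $\mr{Epi}(k,k)$ can be identified with $\mc S_k$. I would carry out this identification carefully, verify that $\pi(R\cdot B)=\pi(B)$ via the order-preservation of $R$, and conclude that the well-defined map $\wt c\colon \mc S_k\to r$, $\wt c(\sigma) := c(R\cdot B)$ for any $B$ with $\pi(B)=\sigma$, witnesses that $\pi$ is a factor of $c$ in $R\cdot M_{m,k}^{\mr{ba}}$.
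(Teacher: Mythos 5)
You correctly set up the dictionary between Boolean matrices and embeddings of c.o.\ Boolean algebras, and you correctly diagnose that the naive application of Theorem \ref{Theorem:DRT-ordered-Boolean} with $r$ colors only yields monochromaticity on $R\cdot M_{m,k}^{\mathrm{oba}}$ and says nothing about $c(R\cdot B)$ when $\pi(B)\neq\mathrm{id}$ (indeed $R\cdot B=(R\cdot B_<)\cdot\pi(B)$ is then not an order-preserving matrix at all). But the fix you sketch does not close this gap. Your concrete suggestion is to pass to the rigid-surjection form (Theorem \ref{Theorem:DLT-rigid}) and claim that the permutation coordinate ``survives because $\mathrm{Epi}(k,k)$ can be identified with $\mathcal{S}_k$''; this is false: a rigid surjection between two linear orders of the same size $k$ must send the fiber minima in increasing order, so $\mathrm{Epi}(k,k)$ is a singleton (the identity), not $\mathcal{S}_k$. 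More generally, rigid surjections encode exactly the matrices in $M^{\mathrm{oba}}$, compositions of rigid surjections are rigid, and the monochromatic set $\mathrm{Epi}(S,R)\circ\gamma$ therefore again corresponds only to products of order-preserving matrices — you are back to the situation you already identified as insufficient. The alternative hints (``$k+1$ atoms'', ``suitable padding'') are not developed into an argument.

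The missing idea, which is how the paper proceeds, is to enlarge the set of colors rather than the structures: apply Theorem \ref{Theorem:DRT-ordered-Boolean} to $\mathcal{P}(k)$, $\mathcal{P}(m)$ with $r^{\mathcal{S}_k}$ colors, and color each order-preserving embedding $\gamma$ (with representing matrix $A_\gamma\in M_{n,k}^{\mathrm{oba}}$) by the full tuple $f(\gamma):=\bigl(c(A_\gamma\cdot\sigma)\bigr)_{\sigma\in\mathcal{S}_k}$. Constancy of $f$ on $\varrho\circ\mathrm{Emb}_<(\mathcal{P}(k),\mathcal{P}(m))$, with constant value $\widetilde c\in r^{\mathcal{S}_k}$, then gives exactly $c(R\cdot B)=c\bigl((R\cdot B_<)\cdot\pi(B)\bigr)=\widetilde c\,(\pi(B))$ for all $B\in M_{m,k}^{\mathrm{ba}}$, using (as you did verify) that $R\cdot B_<\in M_{n,k}^{\mathrm{oba}}$ and $\pi(R\cdot B)=\pi(B)$. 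Without this color-multiplication step (or an equivalent device), your argument proves only monochromaticity on the ordered matrices, not the claimed factorization through $\pi$.
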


\begin{proof}
Let $\mathcal{C}$ be a c.o.  Boolean algebra obtained
by applying the Dual Ramsey Theorem for c.o. Boolean algebras---Theorem \ref{Theorem:DRT-ordered-Boolean}---to the power sets $\mathcal{P}(k)$, $\mathcal{P}(m)$  canonically ordered as above by $s<t$ if and only if $\min (s\triangle t)\in s$, and to the number of colors $r^{\mathcal{S}_{k}}$. Without loss of
generality we can assume that $\mathcal{C}$ is equal to $\mathcal{P}(n) $ for some $n\in \omega $, since any c.o.
Boolean algebra is of this form. We claim that such an $n$ satisfies the
desired conclusions. Indeed, fix a coloring $c:{M}_{n, k}^{\text{ba}}\rightarrow r$. This induces a coloring $f:\mathrm{Emb}_{<}(\mathcal{P}(k),\mathcal{P}(n))\rightarrow r^{\mathcal{S}_{k}}$ as follows.
Let $\gamma $ be an element of $\mathrm{Emb}_{<}(\mathcal{P}(k),\mathcal{P}(n))$, and let $A_{\gamma }\in {M}_{n,k}^{\text{ba}}$ be the
corresponding representing matrix. Define then $f( \gamma ) $ to
be the element $( c( A_{\gamma }\cdot \sigma ) ) _{\sigma
\in \mathcal{S}_{k}}$ of $r^{\mathcal{S}_{k}}$. By the choice of $\mathcal{C}=\mathcal{P}( n) $ there exists $\varrho \in \mathrm{Emb}_{<}(\mathcal{P}(m),\mathcal{P}(n))$  such that $f$ is constant on $\varrho
\circ \mathrm{Emb}_{<}(\mathcal{P}(k),\mathcal{P}(m))$. Let now $\widetilde{c}\in r^{\mathcal{S}_{k}}$ be the constant value of $f$. It is now easy to see that $c(A_\ro \cdot B)=\widetilde c (\pi(B))$ for every $B\in M^\text{ba}_{m, k}$.
\end{proof}

\subsection{Ramsey properties of colorings of matrices over a finite field}
It is natural to consider Ramsey properties of other classes of matrices over a
field $\mathbb{F}$. We are going to see that for $\mathbb{F}$ finite there
is a factorization result similar to the DRT for Boolean matrices, that
extends the well known theorem by Graham, Leeb and Rothschild on
Grassmannians  $\mathrm{Gr}(k,V)$,  the family of all $k$-dimensional subspaces of a vector space $V$
over $\mathbb{F}$. In the following, given a sequence $(x_i)$ in a  vector space $E$, we let $\langle x_i \rangle $ be its linear span inside $E$. 

\begin{theorem}[Graham-Leeb-Rothschild \protect\cite{graham_ramseys_1972}]
\label{Theorem:GLR} Given  $k,m, r\in \N$ there is  $n\in \N$
such that every $r$-coloring of  $\mathrm{Gr}(k,\mathbb{F}^{n})$ has a monochromatic set of the form $\mathrm{Gr}(k,R)$ for some $R\in
\mathrm{Gr}(m,\mathbb{F}^{n})$.
\end{theorem}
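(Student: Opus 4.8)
The plan is to derive Theorem~\ref{Theorem:GLR} --- here $\mbb F$ is a \emph{finite} field --- from the Dual Ramsey Theorem through an intermediate factorization statement for $r$-colorings of full rank rectangular matrices over $\mbb F$, in complete analogy with the way Theorem~\ref{Booleanmatrices} was extracted from Theorem~\ref{Theorem:DRT-ordered-Boolean}. The link is the \emph{reduced column echelon form}: every $A\in M_{n,k}^{k}(\mbb F)$ factors uniquely as $A=\red(A)\cdot\tau(A)$ with $\red(A)$ in reduced column echelon form and $\tau(A)\in\gl(\mbb F^{k})$, and $\red(A)$ depends only on the column space $\langle\text{columns of }A\rangle$. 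Thus $V\mapsto A_{V}$ (the reduced echelon matrix of any $A$ spanning $V$) identifies $\gr(k,\mbb F^{n})$ with the set of rank $k$ reduced echelon $n\times k$ matrices, and for $R\in\gr(m,\mbb F^{n})$ with reduced echelon matrix $A_{R}$ it identifies $\gr(k,R)$ with $\{\langle\text{columns of }A_{R}\cdot A\rangle:A\in M_{m,k}^{k}(\mbb F)\}$, multiplication by $A_{R}$ being injective on Grassmannians because $A_{R}$ has a left inverse.

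The intermediate statement is: for all $k,m,r$ there is $n$ such that every $c:M_{n,k}^{k}(\mbb F)\to r$ admits a reduced echelon $R\in M_{n,m}^{m}(\mbb F)$ and a $\wh c:\gl(\mbb F^{k})\to r$ with $c(R\cdot A)=\wh c(\tau(A))$ for all $A\in M_{m,k}^{k}(\mbb F)$. I would prove it by the argument of Theorem~\ref{Booleanmatrices}, with $\tau$ in the role of $\pi$ and $\gl(\mbb F^{k})$ in the role of $\mc{S}_{k}$: the reduced echelon matrices are coordinatized by a surjection onto $[k]$ that is \emph{rigid} with respect to a suitable antilexicographic ordering of its (finite) index set, so that the class of reduced echelon matrices forms a linearly ordered class to which a dual Ramsey theorem applies. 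Given such a theorem, one colors a reduced echelon matrix $S$ by the tuple $(c(S\cdot g))_{g\in\gl(\mbb F^{k})}$, takes a monochromatic set of the form $A_{R}\cdot\{\text{reduced echelon }m\times k\text{ matrices}\}$, and reads off $\wh c$.

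Granting the intermediate statement, Theorem~\ref{Theorem:GLR} follows at once. Given $c':\gr(k,\mbb F^{n})\to r$, set $c(A):=c'(\langle\text{columns of }A\rangle)$ on $M_{n,k}^{k}(\mbb F)$ and choose $R,\wh c$ as above. The coloring $c$ is invariant under right multiplication by $\gl(\mbb F^{k})$; moreover $\red(A\cdot g)=\red(A)$ (same column space) and $\red(A)$ has a left inverse, whence $\tau(A\cdot g)=\tau(A)\cdot g$. Therefore $\wh c(\tau(A)\cdot g)=c(R\cdot A\cdot g)=c(R\cdot A)=\wh c(\tau(A))$ for every $g\in\gl(\mbb F^{k})$, so $\wh c$ is constant (take $A$ reduced echelon, so that $\tau(A)$ is the identity). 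Hence $c$ is constant on $R\cdot M_{m,k}^{k}(\mbb F)$, i.e.\ $c'$ is constant on $\gr(k,\langle\text{columns of }R\rangle)$, and $\langle\text{columns of }R\rangle\in\gr(m,\mbb F^{n})$ is the required subspace.

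The main obstacle is the dual Ramsey theorem for reduced echelon matrices invoked above: one must identify the antilexicographic ordering that makes ``reduced'' equivalent to ``rigid'', verify that passing to $\gr(k,R)$ on the subspace side corresponds to postcomposition of rigid surjections on the combinatorial side (so that monochromatic sets of the correct shape are produced), and then deduce the Ramsey property of this ordered class from the Dual Ramsey Theorem, e.g.\ from Theorem~\ref{Theorem:DLT-rigid} via a product coloring over the finitely many ``reductions'', in the spirit of the proof of Theorem~\ref{Booleanmatrices}. It is natural to treat $\mbb F=\mbb F_{2}$ first, where this bookkeeping is lightest, and then repeat the argument over an arbitrary finite field.
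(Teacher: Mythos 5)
Your architecture is exactly the paper's: Theorem \ref{Theorem:GLR} is extracted from a factorization theorem for colorings of full-rank $n\times k$ matrices (the paper's Theorem \ref{finitefield}), which is in turn deduced from the Dual Ramsey Theorem \ref{Theorem:DLT-rigid} by enlarging the set of colors to $r^{\mathrm{GL}(\mathbb{F}^{k})}$, in the spirit of Theorem \ref{Booleanmatrices}. Your final reduction is correct, and indeed more explicit than the paper's one-line remark after Theorem \ref{finitefield}: the coloring $c(A):=c'(\langle\text{columns of }A\rangle)$ is invariant under right multiplication by $\mathrm{GL}(\mathbb{F}^{k})$, in your normalization $\tau(A\cdot g)=\tau(A)\cdot g$ because the RCEF representative of a column space is unique, hence $\widehat{c}$ is constant and $c'$ is constant on $\mathrm{Gr}(k,\langle\text{columns of }R\rangle)$.

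The genuine gap is the intermediate factorization itself, and specifically the combinatorial lemma you defer as ``the main obstacle''. The one concrete description you give of it is not correct over a general finite field: a full-rank reduced echelon matrix is \emph{not} coordinatized by a rigid surjection onto a $k$-element set --- that is the Boolean situation of Theorem \ref{Booleanmatrices}. What is actually needed, and what the paper proves, is Proposition \ref{io43iuooi4343w44}: a full-rank $k\times n$ matrix $A$ is in RREF if and only if the linear map $T_{A}:\mathbb{F}^{n}\to\mathbb{F}^{k}$ is a rigid surjection for the antilexicographic orders induced by an ordering of $\mathbb{F}$ whose first two elements are $0<1$, and every unit vector $u_{i}$ occurs among the columns of $A$; its proof is a nontrivial two-way induction using the right inverse $I_{A}$, and it is the heart of the argument. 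Moreover, since the rows of an arbitrary RCEF $n\times k$ matrix need not exhaust $\mathbb{F}^{k}$, one cannot apply a dual Ramsey theorem directly to ``the class of reduced echelon matrices'': the paper instead colors $\mathrm{Epi}(n,\mathbb{F}^{k})$ via the row map $\Phi_{n,k}$ (Lemma \ref{io43iuooi4343w44dsw}) with colors indexed by $\mathrm{GL}(\mathbb{F}^{k})$, and transfers the monochromatic set $\mathrm{Epi}(\mathbb{F}^{m},\mathbb{F}^{k})\circ\varrho$ back to matrices through the identity $\Phi_{n,k}(T_{B}\circ\varrho)=R\cdot \mathrm{red}_{c}(A)$, where $B$ is the transpose of the reduced column echelon form of $A$ and $T_{B}\in\mathrm{Epi}(\mathbb{F}^{m},\mathbb{F}^{k})$ again by Proposition \ref{io43iuooi4343w44}. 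Both the ``reduced $\Leftrightarrow$ rigid'' equivalence and this compatibility of composition with matrix multiplication are absent from your sketch; once they are supplied, your proposal coincides with the paper's proof.
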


This result is a particular case of the factorization theorem for injective
matrices. Recall that a $p\times q$-matrix $A=(a_{ij})$ is in \emph{reduced row echelon form} (RREF) when there is $p_{0}\leq p$ and (a unique) strictly increasing sequence $(j_{i})_{i<p_{0}}$ of integers $<q$ such that
\begin{enumerate}[i)]
\item $A\cdot u_{j_{i}}=u_{i}$ for every $i<p_{0}$ and

\item $\langle A\cdot u_{j}\rangle _{j<j_{i}}=\langle u_{l}\rangle _{l<i}$ for every $i<p_{0}$.
\end{enumerate}
When $A$ is
in RREF  and it has rank $p$, we define  $I_A$ as the  $q\times p$-matrix  with entries in $\left\{
0,1\right\} $, and whose nonzero entries are in the positions $(j_{i},i)$ ($i<p$).  For example for the field $\mbb F_5$ and 
\begin{equation}\label{lk3jreijwrijw}
 A= \left( \begin{array}{cccccc}  1 & 2 & 0 & 3 &0 &1 \\ 0 & 0 & 1&4 & 0 &2 \\ 0 & 0 & 0 & 0&1 & 3
\end{array} \right) \text{ we have  }I_A= \left( \begin{array}{ccc}   1 & 0 & 0 \\ 0 & 0 & 0 \\ 0& 1 & 0\\0 & 0 & 0 \\ 0& 0& 1 \\0 & 0 & 0 
\end{array} \right)  
\end{equation}    

It follows   that $I_A$ is a right inverse to $A$, i.e., $A\cdot I_{A}=\mathrm{Id}_{p}$. A matrix $A$ is in \emph{reduced column echelon form (RCEF)} when
its transpose $A^{\mathrm{t}}$ is in RREF. Let $\mathcal{E}_{n,m}(\mathbb{F})$, $\mc E(\mbb F)$   be the collection
of $n\times m$-matrices of rank $m$  in RCEF  and  of  full rank matrices in RCEF, respectively.  

\defi
Let $\tau:  M_{\infty, k}^k\to \mr{GL}(\mbb F^k)$ be the mapping that assigns to each
$A\in {M}_{\infty, k}^k(\mathbb{F})$  the unique $k\times k$-invertible matrix $\tau (A)$  such that $A\cdot \tau (A)$ is in  RCEF. Let also $\mr{red}_c(A):=A\cdot \tau (A)$.
\fdefi

\begin{theorem}[Factorization of colorings  of full rank matrices over a finite field]
\label{finitefield} Given $k,m,r\in \N $ there is $n\in \N $ such
that for every  $c:{M}_{n, k}^k(\mathbb{F})\to r$  there is $R\in \mathcal{E}_{n, m}(\mathbb{F})$ such that $\tau$ is a factor  of $c$ in  $ R \cdot M_{m, k}^k(\mbb F)  $.
 \end{theorem}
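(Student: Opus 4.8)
The plan is to mimic the proof of Theorem~\ref{Booleanmatrices}, replacing the Dual Ramsey Theorem for canonically ordered Boolean algebras by the Graham--Leeb--Rothschild Theorem (Theorem~\ref{Theorem:GLR}), and using the RCEF decomposition $A=\red_c(A)\cdot\tau(A)^{-1}$ in place of the factorization $A=A_<\cdot\pi(A)$. The key algebraic fact that makes this work is that $\red_c(A)\in\mc E_{\infty,k}(\mbb F)$ depends only on the column space $\langle A\cdot u_j\rangle_{j<k}$ of $A$; that is, each $k$-dimensional subspace $W$ of $\mbb F^n$ has a canonical RCEF ``basis matrix'' $E_W\in\mc E_{n,k}(\mbb F)$ whose columns span $W$, and for a full rank $A\in M^k_{n,k}$ with column space $W$ we have $A=E_W\cdot\tau(A)^{-1}$, equivalently $\red_c(A)=E_W$ and $\tau(A)=$ the change-of-basis matrix from the columns of $A$ to the columns of $E_W$. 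This is exactly the statement that $\tau$ measures the ``coordinates'' of $A$ relative to the canonical frame of its column space, and the finite field theorem should say that, after restricting to a suitable subspace, the color of $A$ is governed only by these coordinates.

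First I would fix $k,m,r$ and apply Theorem~\ref{Theorem:GLR} to $k$, $m$, and the number of colors $r^{\gl(\mbb F^k)}$ (a finite set since $\mbb F$ is finite), obtaining an $n$ with the GLR property for $\gr(k,\mbb F^n)$. Next, given $c:M^k_{n,k}\to r$, I would push it down to a coloring $f:\gr(k,\mbb F^n)\to r^{\gl(\mbb F^k)}$ by setting, for a subspace $W$ with canonical RCEF basis matrix $E_W$,
\[
f(W):=\bigl(c(E_W\cdot g)\bigr)_{g\in\gl(\mbb F^k)}.
\]
This is well defined because $E_W$ is uniquely determined by $W$, and for every $g\in\gl(\mbb F^k)$ the matrix $E_W\cdot g$ is again full rank with column space $W$. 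By the choice of $n$, there is $S\in\gr(m,\mbb F^n)$ with $f$ constant, say equal to $\widetilde c\in r^{\gl(\mbb F^k)}$, on $\gr(k,S)$. Let $R:=E_S\in\mc E_{n,m}(\mbb F)$ be the canonical RCEF basis matrix of $S$.

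It then remains to check that $\tau$ is a factor of $c$ on $R\cdot M^k_{m,k}(\mbb F)$, i.e.\ that $c(R\cdot B)=\widetilde c(\tau(B))$ for every full rank $B\in M^k_{m,k}$. Here $R\cdot B$ is a full rank $n\times k$ matrix whose column space $W$ is a $k$-dimensional subspace of $S$, so $f(W)=\widetilde c$; thus $c(E_W\cdot g)=\widetilde c(g)$ for all $g\in\gl(\mbb F^k)$. Now I must identify which $g$ gives $E_W\cdot g=R\cdot B$: writing $R\cdot B=E_W\cdot\tau(R\cdot B)^{-1}$ from the RCEF decomposition, we get $c(R\cdot B)=\widetilde c(\tau(R\cdot B)^{-1})$, so after reparametrizing $\widetilde c$ by $g\mapsto\widetilde c(g^{-1})$ it suffices to show $\tau(R\cdot B)=\tau(B)$. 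This is the one genuinely computational point, and it is the step I expect to require the most care: it amounts to the fact that since $R=E_S$ is in RCEF and its columns form the canonical frame of $S$, right-multiplication by $R$ on a matrix $B$ with entries describing a subspace of $\mbb F^m$ is compatible with the RCEF normalization — precisely, $\red_c(R\cdot B)=R\cdot\red_c(B)$ and hence $\tau(R\cdot B)=\tau(B)$. I would prove this by noting that $R\cdot\red_c(B)$ has columns spanning the same subspace as $R\cdot B$, and then verifying that $R\cdot\red_c(B)$ is itself in RCEF: this uses that $R\in\mc E_{n,m}(\mbb F)$ carries the standard flag $\langle u_l\rangle_{l<i}$ of $\mbb F^m$ to the corresponding canonical subflag of $S$ in an order-preserving way on pivot positions, so the pivot structure (conditions (i) and (ii) in the definition of RREF, transposed) of $\red_c(B)$ is preserved under left multiplication by $R$. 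Once this identity is in hand, the theorem follows, with the final factor being $g\mapsto\widetilde c(g^{-1})$.
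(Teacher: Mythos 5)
Your proposal is correct, but it takes a genuinely different route from the paper's. The paper proves Theorem \ref{finitefield} directly from the Dual Ramsey Theorem for rigid surjections (Theorem \ref{Theorem:DLT-rigid}): it orders $\mbb F^k$ and $\mbb F^m$ antilexicographically, shows in Proposition \ref{io43iuooi4343w44} that full rank matrices in RREF are exactly the matrices of rigid surjections having every $u_i$ among their columns, and colors $\mathrm{Epi}(n,\mbb F^k)$ with $r^{\mathrm{GL}(\mbb F^k)}$ colors; the Graham--Leeb--Rothschild Theorem \ref{Theorem:GLR} is then deduced as a corollary of Theorem \ref{finitefield}, which is the declared point of the section (a proof of Rota's conjecture for finite fields from the DRT alone). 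You instead take Theorem \ref{Theorem:GLR} as the Ramsey input and run the argument of Theorem \ref{Booleanmatrices}, enriching the coloring by $\mathrm{GL}(\mbb F^k)$ via the unique RCEF basis matrix $E_W$ of each $W\in\gr(k,\mbb F^n)$. Your argument is sound: $E_W$ is unique, every full rank $A$ with column space $W$ equals $E_W\cdot g$ for the unique $g=\tau(A)^{-1}$, and your one computational point, $\tau(R\cdot B)=\tau(B)$ for $R\in\mc E_{n,m}(\mbb F)$, is exactly part b) of the corollary to Proposition \ref{io43iuooi4343w44}, provable just as you indicate (a product of full rank RCEF matrices is again RCEF, plus uniqueness of $\tau$); the final reparametrization $g\mapsto\widetilde c(g^{-1})$ is harmless. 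The trade-off: your route is shorter, makes the analogy with the Boolean case completely transparent, and avoids the antilexicographic bookkeeping of Proposition \ref{io43iuooi4343w44}; but it consumes the Graham--Leeb--Rothschild theorem of \cite{graham_ramseys_1972} as a black box, so within the paper's own architecture (where Theorem \ref{Theorem:GLR} is obtained \emph{from} Theorem \ref{finitefield}) it would be circular, and it cannot serve the paper's purpose of rederiving GLR from the Dual Ramsey Theorem. As a standalone proof of the stated theorem, however, it is correct.
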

This gives immediately the Graham-Leeb-Rothschild
Theorem---Theorem \ref{Theorem:GLR}---as every $k$-dimensional subspace of $\mathbb{F}^{n}$ can be represented as the linear span of the columns of a
matrix in RCEF. The proof of Theorem \ref{finitefield} is a direct consequence of the DRT and the next propositions. In the following, we fix an ordering $<$ on the finite field $\mathbb{F}$ such that $0<1$ are the first two elements of $\mathbb{F}$. We let $\mathbb{F}^{k}$ be endowed with
the corresponding antilexicographic order $<_{\mathrm{alex}}$ and we define  $\Phi
_{n,k}:\mathrm{Epi}(n,\mathbb{F}^{k})\rightarrow {M}_{n,
k}^k$ as the function assigning to each rigid surjection $f$ the matrix whose
rows are $f(j)$ for every $j< n$.

\begin{lemma}
\label{io43iuooi4343w44dsw} $\Phi _{n,k}(f)$ is a full rank matrix in
RCEF.
\end{lemma}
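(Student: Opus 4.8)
The plan is to prove directly that $B:=\Phi_{n,k}(f)$ has its transpose $B^{\mr t}$ in RREF of rank $k$; by the definition of RCEF this is exactly the assertion that $B$ is a full rank matrix in RCEF. Recall that the $j$-th row of $B$ is $f(j)\in\mbb F^k$, so that $B^{\mr t}\cdot u_j=f(j)$ for every $j<n$.

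I would first record two elementary observations. \emph{(a) Position of the unit vectors.} Using that $0<1$ are the two least elements of $\mbb F$, a short inspection of the highest coordinate in which a vector differs from $u_i$ shows that in $(\mbb F^k,<_{\mr{alex}})$ one has $\{v\in\mbb F^k:v<_{\mr{alex}}u_i\}=\langle u_\ell\rangle_{\ell<i}$; in particular $\mbf 0$ is the $<_{\mr{alex}}$-least vector and $u_0<_{\mr{alex}}u_1<_{\mr{alex}}\cdots<_{\mr{alex}}u_{k-1}$. \emph{(b) Partial images of rigid surjections.} If $g\colon n\to L$ is a rigid surjection onto a finite linear order and $j<n$, then $\{g(0),\dots,g(j)\}$ is a $<_L$-initial segment of $L$: if $t$ occurs among $g(0),\dots,g(j)$ and $s<_L t$, then $\min g^{-1}(s)<\min g^{-1}(t)\le j$ by rigidity, so $s$ occurs there too.

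Since $f$ is onto $\mbb F^k$, each $u_i$ (for $i<k$) lies in the range of $f$, so $j_i:=\min f^{-1}(u_i)$ is well defined; by \emph{(a)} and rigidity, $j_0<j_1<\cdots<j_{k-1}<n$. I would then check that $(j_i)_{i<k}$ witnesses that the $k\times n$ matrix $B^{\mr t}$ is in RREF. Condition i): $B^{\mr t}\cdot u_{j_i}$ is the $j_i$-th row of $B$, namely $f(j_i)=u_i$. Condition ii): $\langle B^{\mr t}\cdot u_j\rangle_{j<j_i}=\langle f(j):j<j_i\rangle$, and by \emph{(b)} the set $\{f(j):j<j_i\}$ is a $<_{\mr{alex}}$-initial segment of $\mbb F^k$ not containing $u_i$, hence is contained in $\langle u_\ell\rangle_{\ell<i}$ by \emph{(a)}, while it contains each $u_\ell$ with $\ell<i$ because $j_\ell<j_i$; therefore $\langle f(j):j<j_i\rangle=\langle u_\ell\rangle_{\ell<i}$. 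Having exhibited $k$ pivots, $B^{\mr t}$ is in RREF of rank $k$, and hence $B$ is a full rank matrix in RCEF.

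The only step that requires genuine care is \emph{(a)}: one has to fix the convention for the antilexicographic order and use precisely that $0$ and $1$ are the two smallest elements of $\mbb F$; otherwise the $<_{\mr{alex}}$-predecessors of $u_i$ need not form the subspace $\langle u_\ell\rangle_{\ell<i}$ and the pivot structure breaks. The remaining steps are routine bookkeeping with the definitions of rigid surjection and of RREF.
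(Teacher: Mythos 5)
Your proof is correct and takes essentially the same route as the paper's: the pivots are $j_i=\min f^{-1}(u_i)$, rigidity makes them strictly increasing and forces every earlier column of $\Phi_{n,k}(f)^{\mathrm{t}}$ to be $<_{\mathrm{alex}}$-below $u_i$, and the identification $\{v\in\mathbb{F}^k: v<_{\mathrm{alex}}u_i\}=\langle u_\ell\rangle_{\ell<i}$ (used implicitly in the paper) yields condition ii) of RREF. Your observations (a) and (b), and your explicit check of the reverse inclusion $\langle u_\ell\rangle_{\ell<i}\subseteq\langle f(j)\rangle_{j<j_i}$, merely spell out what the paper compresses into one line.
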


\begin{proof}
It is clear that $\Phi _{n, k}( f) $ is a full rank matrix.
We prove that it is in RCEF. Let $A$ be the transpose of $\Phi _{n, k}(f)$. For each $i\in k$, let $j_{i}:=\min \{{j<n}\,:\, {A\cdot u_{j}=u_{i}}\}$. Then $(j_{i})_{i<k}$ is strictly
increasing, since $f$ is a rigid surjection, and if $j<j_{i}$, then $A\cdot
u_{j}<_{\mathrm{alex}}u_{i}$, by the definition of $j_{i}$, and the rigidity of $f$. Therefore $A\cdot u_{j}\in \langle u_{l}\rangle
_{l<i}$.  Consequently, $A$ is in RREF. \end{proof}
The next is the key relation between matrices in RREF and rigid surjections that will allow us to use the dual Ramsey Theorem and prove Theorem  \ref{finitefield}. 
\begin{proposition}
\label{io43iuooi4343w44} For $A\in {M}_{k, n}^k(\mathbb{F})$ the following are equivalent.

\begin{enumerate}[i)]
\item $A$ is in RREF.

\item The linear map $T_A:\mathbb{F}^{n}\rightarrow \mathbb{F}^{k}$
represented by $A$ in the corresponding unit bases is a rigid surjection and for every $i<k$ there is a column of $A$ equal to $u_{i}$.
\end{enumerate}
\end{proposition}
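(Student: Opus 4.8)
My plan is to analyze, for each $s\in\mathbb{F}^{k}$, the $<_{\mathrm{alex}}$-least preimage $\sigma(s):=\min T_{A}^{-1}(s)$, because rigidity of $T_{A}$ is by definition the statement that $\sigma\colon(\mathbb{F}^{k},<_{\mathrm{alex}})\to(\mathbb{F}^{n},<_{\mathrm{alex}})$ is order-preserving (surjectivity of $T_{A}$ being automatic since $\mathrm{rank}\,A=k$). Throughout I will use four elementary facts about the antilexicographic order, all consequences of $0$ being the $<$-least element of $\mathbb{F}$: that $0$ is the $<_{\mathrm{alex}}$-minimum of $\mathbb{F}^{n}$; that $\langle u_{0},\dots,u_{i-1}\rangle$ is a $<_{\mathrm{alex}}$-initial segment with $u_{i}=\min\big(\mathbb{F}^{n}\setminus\langle u_{0},\dots,u_{i-1}\rangle\big)$; that inserting the coordinates of a vector of $\mathbb{F}^{k}$ into a fixed strictly increasing list of coordinate positions of $\mathbb{F}^{n}$ is order-preserving for the two antilexicographic orders; and that a finite linear order has no automorphism other than the identity.

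For $(i)\Rightarrow(ii)$ I would assume $A$ is in RREF with pivot columns at positions $j_{0}<\dots<j_{k-1}$, and describe $\ker A$ explicitly: for each non-pivot index $j<n$ the vector $v_{j}:=u_{j}-\sum_{i<k}(Au_{j})_{i}\,u_{j_{i}}$ satisfies $Av_{j}=0$, and these $n-k$ vectors are linearly independent, hence a basis of $\ker A$. Using the RREF span conditions one checks that the largest coordinate in the support of each $v_{j}$ is $j$ itself, a non-pivot index; consequently every nonzero $z\in\ker A$ has its $<_{\mathrm{alex}}$-leading coordinate at a non-pivot index. Now fix $s$ and put $y:=I_{A}\cdot s=\sum_{i<k}s_{i}u_{j_{i}}$, so $Ay=s$ and $y$ is supported on pivot indices; any other solution is $x=y+z$ with $z\in\ker A\setminus\{0\}$, and writing $j^{\dagger}$ for the leading coordinate of $z$ we get $x_{j^{\dagger}}=z_{j^{\dagger}}\neq 0=y_{j^{\dagger}}$ while $x$ agrees with $y$ above $j^{\dagger}$, so $y<_{\mathrm{alex}}x$. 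Hence $\sigma(s)=I_{A}\cdot s$; since $s\mapsto I_{A}\cdot s$ is order-preserving, $T_{A}$ is a rigid surjection, and $u_{i}=Au_{j_{i}}$ is a column of $A$ for each $i$, which is (ii).

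For $(ii)\Rightarrow(i)$ I would call $j<n$ a pivot index of $A$ when $Au_{j}\notin\langle Au_{j'}\rangle_{j'<j}$, and let $P=\{p_{0}<\dots<p_{k-1}\}$ be the set of pivot indices (there are $\mathrm{rank}\,A=k$ of them). The key step is to show that $\sigma(s)$ is supported on $P$ for every $s$: if not, take the largest coordinate $j^{\bullet}\notin P$ in the support of $\sigma(s)$, write $Au_{j^{\bullet}}=\sum_{j'<j^{\bullet}}\lambda_{j'}Au_{j'}$, and set $x':=\sigma(s)+\sigma(s)_{j^{\bullet}}\big(-u_{j^{\bullet}}+\sum_{j'<j^{\bullet}}\lambda_{j'}u_{j'}\big)$; then $Ax'=s$, $x'$ agrees with $\sigma(s)$ above $j^{\bullet}$, and $x'_{j^{\bullet}}=0\neq\sigma(s)_{j^{\bullet}}$, so $x'<_{\mathrm{alex}}\sigma(s)$, contradicting minimality. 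Thus $\sigma$ injects $\mathbb{F}^{k}$ into the $|\mathbb{F}|^{k}$-element set $\langle u_{j}:j\in P\rangle$, hence is a bijection onto it, and being order-preserving it is an order-isomorphism; identifying $\langle u_{j}:j\in P\rangle$ with $(\mathbb{F}^{k},<_{\mathrm{alex}})$ via $u_{p_{i}}\mapsto u_{i}$ turns it into an automorphism of a finite linear order, hence the identity, so $\sigma(s)=\sum_{i<k}s_{i}u_{p_{i}}$. Applying $T_{A}$ and taking $s=u_{l}$ gives $Au_{p_{l}}=u_{l}$, and then $p_{l}=\min\{j<n:Au_{j}=u_{l}\}$, since an earlier occurrence of $u_{l}$ would make $p_{l}$ a non-pivot index; finally, the span of an initial segment of the columns of $A$ equals the span of the pivot columns among them, so $\langle Au_{j}\rangle_{j<p_{l}}=\langle Au_{p_{i}}\rangle_{i<l}=\langle u_{i}\rangle_{i<l}$, which is exactly the statement that $A$ is in RREF with pivots $j_{i}=p_{i}$.

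The step I expect to be the real obstacle is the implication $(ii)\Rightarrow(i)$: rigidity constrains only the least preimages $\sigma(s)$, and one has to bootstrap from this to recover the entire column structure of $A$. The two ideas that make it work are the exchange argument forcing $\sigma$ to be supported on the pivot indices, and the rigidity of finite linear orders, which upgrades an order-preserving bijection to the identity and thereby forces the pivot columns of $A$ to be $u_{0},u_{1},\dots,u_{k-1}$ occurring in that order. (Incidentally this argument shows that the second clause of (ii), that each $u_{i}$ is a column of $A$, is already a consequence of the first together with $\mathrm{rank}\,A=k$, so it could be dropped; keeping it merely makes the statement more readily usable.)
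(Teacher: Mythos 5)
Your proof is correct, and while the first implication follows the paper's blueprint, the second takes a genuinely different route. For {\it i)}$\Rightarrow${\it ii)} both arguments rest on the same key identity $\min_{<_{\mathrm{alex}}}T_A^{-1}(w)=I_A\cdot w$; the paper verifies it by a downward induction on the coordinates of the minimal preimage, using the row structure of $A$, whereas you verify it by exhibiting the basis $v_j=u_j-\sum_i(Au_j)_iu_{j_i}$ of $\ker A$ and noting that every nonzero kernel element leads at a non-pivot coordinate --- equivalent content, with arguably cleaner bookkeeping. The real divergence is in {\it ii)}$\Rightarrow${\it i)}: the paper takes $j_i$ to be the first column equal to $u_i$ (so it uses the second clause of {\it ii)}) and proves $T_A\langle u_j\rangle_{j<j_i}=\langle u_l\rangle_{l<i}$ by induction on $i$, reaching a contradiction at a least bad column $\xi$ via rigidity; you instead define pivot indices intrinsically, show by an exchange argument that minimal preimages are supported on the pivot set, and then combine a cardinality count with the fact that a finite linear order has only the trivial automorphism to pin down $\sigma(s)=\sum_i s_iu_{p_i}$ exactly, from which both RREF conditions follow at once. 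This global description of $\sigma$ buys you the observation, absent from the paper, that the second clause of {\it ii)} is redundant given rigidity and $\operatorname{rank}A=k$. The price is that your counting step genuinely uses finiteness of $\mathbb{F}$ (the bijection onto $\langle u_j : j\in P\rangle$ is a counting argument), which is harmless here since the whole section fixes a finite field with a chosen ordering, while the paper's induction is purely local and coordinate-wise. Both verifications of the elementary facts you invoke (initial-segment structure of $<_{\mathrm{alex}}$, order-preservation of spreading coordinates into an increasing set of positions) are sound, so there is no gap.
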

In particular we have the following.
\begin{corollary}
Suppose that $A\in M_{n,m}^m(\mbb F)$ and $B\in M_{m,k}^k(\mbb F)$.
\begin{enumerate}[a)]
\item  If $A$ and $B$ are in RCEF (resp. RREF) then $A\cdot B$ is also in RCEF (resp. RREF).
\item If $A$ is in RCEF then $\tau(A\cdot B)= \tau(B)$. \qed
  
\end{enumerate}  
\end{corollary}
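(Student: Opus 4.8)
The plan is to derive everything from Proposition \ref{io43iuooi4343w44}, which identifies the RREF matrices of full row rank with those whose associated linear map is a rigid surjection and which contain every $u_i$ among their columns; I will also use throughout that a matrix is in RCEF exactly when its transpose is in RREF. First I would record that $A\cdot B$ has rank $k$: its column map is the composite of the injective linear maps $\mbb F^k\to\mbb F^m$ and $\mbb F^m\to\mbb F^n$ given by $B$ and $A$, so $A\cdot B\in M^k_{n,k}(\mbb F)$. Then the transposes $A^{\mr t}\in M^m_{m,n}(\mbb F)$, $B^{\mr t}\in M^k_{k,m}(\mbb F)$ and $(A\cdot B)^{\mr t}=B^{\mr t}\cdot A^{\mr t}\in M^k_{k,n}(\mbb F)$ all have full row rank, so Proposition \ref{io43iuooi4343w44} applies to each of them.

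For part a) I would assume $A$ and $B$ in RCEF, i.e.\ $A^{\mr t}$ and $B^{\mr t}$ in RREF, and show that $(A\cdot B)^{\mr t}$ is in RREF. I need two ingredients. The first is the elementary fact that a composition of rigid surjections is a rigid surjection: if $f\colon R\to S$ and $g\colon S\to T$ are rigid, then $s\mapsto\min f^{-1}(s)$ and $t\mapsto\min g^{-1}(t)$ are order preserving, and $\min(g\circ f)^{-1}(t)=\min f^{-1}\!\bigl(\min g^{-1}(t)\bigr)$, so $t\mapsto\min(g\circ f)^{-1}(t)$ is order preserving as well. The second is the translation furnished by Proposition \ref{io43iuooi4343w44}: the maps $T_{A^{\mr t}}\colon\mbb F^n\to\mbb F^m$ and $T_{B^{\mr t}}\colon\mbb F^m\to\mbb F^k$ are rigid surjections, hence so is their composite $T_{(A\cdot B)^{\mr t}}=T_{B^{\mr t}}\circ T_{A^{\mr t}}$; and for every $i<k$ there is $l<m$ with $B^{\mr t}\cdot u_l=u_i$ and then $j<n$ with $A^{\mr t}\cdot u_j=u_l$ (by Proposition \ref{io43iuooi4343w44} applied to $B^{\mr t}$ and to $A^{\mr t}$, using that both have full row rank so that all the required standard basis vectors occur among their columns), whence $(A\cdot B)^{\mr t}\cdot u_j=B^{\mr t}\cdot u_l=u_i$ and every $u_i$, $i<k$, is a column of $(A\cdot B)^{\mr t}$. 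The converse implication in Proposition \ref{io43iuooi4343w44} then yields that $(A\cdot B)^{\mr t}$ is in RREF, i.e.\ $A\cdot B$ is in RCEF. The variant with RREF in place of RCEF I would prove by the identical argument applied to $A$ and $B$ directly instead of to their transposes.

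For part b), assume $A\in\mc E_{n,m}(\mbb F)$ is in RCEF and let $B\in M^k_{m,k}(\mbb F)$. By the definition of $\tau$, the matrix $\mr{red}_c(B)=B\cdot\tau(B)$ lies in $\mc E_{m,k}(\mbb F)$, that is, it is an $m\times k$ matrix of rank $k$ in RCEF. Applying part a) to the pair $A,\mr{red}_c(B)$, the matrix $A\cdot\mr{red}_c(B)=(A\cdot B)\cdot\tau(B)$ is again in RCEF. Since $A\cdot B\in M^k_{n,k}(\mbb F)$ and $\tau(B)\in\gl(\mbb F^k)$, the uniqueness clause in the definition of $\tau$ forces $\tau(A\cdot B)=\tau(B)$.

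I do not expect a real obstacle; the argument is essentially bookkeeping. The points that need care are keeping track of transposes and matrix shapes so that Proposition \ref{io43iuooi4343w44} is invoked only for full-rank matrices of the correct size, and noticing that it is precisely the condition ``every $u_i$ occurs as a column'' that propagates through a matrix product --- and it does so exactly because the relevant factor has a full set of pivot columns, which is the rank hypothesis present in the statement.
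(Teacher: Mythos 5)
Your argument is correct and is exactly the route the paper intends: the corollary is stated with a \qed as an immediate consequence of Proposition \ref{io43iuooi4343w44}, and your proof (composition of rigid surjections is rigid, the standard basis columns propagate through the product, then b) via uniqueness of $\tau$ applied to $A\cdot\mathrm{red}_c(B)$) is the expected verification of that claim. One small caveat: for the ``resp.\ RREF'' clause you cannot apply Proposition \ref{io43iuooi4343w44} to $A$ and $B$ ``directly'', since with the stated shapes they have full \emph{column} rank, so $T_A$ and $T_B$ are injections rather than surjections and the rigid-surjection characterization does not apply to them; note instead that the substantive RREF statement (for full row rank matrices, in the order $B^{\mr t}\cdot A^{\mr t}$) is precisely what you already established inside the RCEF argument, while in the literal shapes of the corollary an RREF matrix of full column rank is forced to be of the form $\left(\begin{smallmatrix} I \\ 0 \end{smallmatrix}\right)$, so that case is trivial.
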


\begin{proof}[Proof of Proposition \ref{io43iuooi4343w44}]
{\it i)}$\Rightarrow ${\it ii)} Suppose that $A$ is in RREF.  We will prove that the canonical linear operator $T_A:\mathbb{F}^{n}\rightarrow \mathbb{F}^{k}$, $T_A(u_{i}):=A\cdot u_{i}$  for  $i<n$ is a rigid surjection from $\mathbb{F}^{n}$ to $\mathbb{F}^{k}$ endowed with the antilexicographical order $<_{\mathrm{alex}}$ described before. Let $(j_{i})_{i<k}$ be the strictly increasing sequence in $n$ witnessing that $A$ is in RREF. By linearity, $T_A(0)=0$. Fix now $w\in \mathbb{F}^{k}$.

\begin{pclaim}
$\min_{<_{\mathrm{alex}}}(T_A)^{-1}(w)=I_{A}\cdot w$.
\end{pclaim}
From this, since $I_{A}:\mathbb{F}^{k}\rightarrow \mathbb{F}^{n}$ is  $<_{\mathrm{alex}}$-increasing, we obtain that $T_A$ is a rigid surjection.

\begin{proof}[Proof of Claim:]  Applied to the example in \eqref{lk3jreijwrijw} and to $w=(1,2,3)$,  it should be clear that the spread $I_A \cdot (1,2,3)= (1,0,2,0,3,0)$ of $(1,2,3)$ is the $<_\mr{alex}$-least  element of the preimage of $(1,2,3)$ under $T_A$. We give a detailed proof. 
Suppose that $(v_{j})_{j<n}=\bar{v}=\min_{<_{\mathrm{alex}}}\{{v\in
\mathbb{F}^{n}}\,:\,{A\cdot v=w}\}$. Set $z=(z_{j})_{j}:=I_{A}(w)$. We prove
by induction on $i<k$ that $v_{j}=z_{j}$ for every $j\geq j_{k-i-1}$.
Suppose that $i=0$. Since for every $j>j_{k-1}$ one has that $z_{j}=0$, we
obtain that $v_{j}=0$, by $<_{\mathrm{alex}}$-minimality of $\bar{v}$. Let $(A)_{k-1}$ be the $(k-1)^{\mathrm{th}}$-row of $A$. It follows that $(A)_{k-1}=u_{j_{k-1}}+y$, where $y\in \langle u_{j}\rangle _{j>j_{k-1}}$. Hence,
\begin{equation*}
z_{j_{k-1}}=w_{k-1}=(A)_{k-1}\cdot \bar{v}=v_{j_{k-1}}.
\end{equation*}
Suppose that the conclusion holds for $i$, that is, $v_{j}=z_{j}$ for every $j\geq j_{k-i-1}
$. We will prove that it also holds for $i+1$. Since $v\leq _{\mathrm{alex}}z$, and $z_{j}=0$ for every $j_{k-i'-2}<j<j_{k-i'-1}$ and $0\leq i'\leq i$, we obtain that $v_{j}=0$ for
such $j$'s. Then the $(k-i-2)^{\mathrm{nd}}$ row of $A$ is of the form $(A)_{k-i-2}=u_{j_{k-i-2}}+y$ with $y$ in the   span of $\{{u_{j}}\,:\,{j>j_{k-i-2},\,j\neq j_{p}\text{ for all $p$}}\}$. 
 It follows that
\[\pushQED{\qed}
z_{j_{k-i-2}}=w_{k-i-2}=(A)_{k-i-2}\cdot \bar{v}=v_{j_{k-i-2}}. \qedhere \popQED
\] \let\qed\relax
\end{proof}
{\it ii)}$\Rightarrow ${\it i)} Now suppose that $T_A$ is a rigid surjection from $\mathbb{F}^{n}$ to $\mathbb{F}^{k}$ with respect to the antilexicographical
orderings, and that for every $i<k$ a column of $A$ is $u_{i}$. For each $i<k$, let $j_{i}$ be the first such column of $A$. We prove that $(j_{i})_{i<k}$ witnesses that $A$ is in RREF, that is:

\begin{pclaim}
$T_A\langle u_j\rangle_{j<j_i}= \langle u_l\rangle_{l<i}$ for every $i<k$.
\end{pclaim}

\begin{proof}[Proof of Claim:]
The proof is by induction on $i$. If $i=0$, then $T_A\langle
u_j\rangle_{j<j_0}=\{ 0\} $ because $u_0$ is the second element of $\mathbb{F}^n$ in the antilex ordering, while the first element is the zero vector. Suppose the result is true for $i$, and let us
extend it to $i+1$. In particular, we know that $j_{i+1}>j_i$, and it is clear that $\langle u_l\rangle_{l\le i}\subseteq T_A\langle u_j\rangle_{j\le
j_i}\subseteq \langle u_j\rangle_{j<j_{i+1}}$. Suppose towards a contradiction that there exists $j$ such that $j_i<j<j_{i+1}$ and $T_A(u_j)\notin \langle u_l\rangle_{l\le i}$. Denote by $\xi$ the least such $j$. Thus, $u_{i+1}\le_{\mathrm{alex}} T_A(u_\xi)$, hence there is some $x\le_\mathrm{alex} u_\xi$ such that $T_A(x)=u_{i+1}$. This means, by the minimality of $\xi$, that $T_A(u_\xi)= y+u_{i+1}$ with $y\in \langle u_l\rangle_{l\le i}$. We know that $y\neq 0$ by the minimality of $j_{i+1}$; so $u_{i+1}<_\mathrm{alex} y+u_{i+1}$. Hence, \begin{equation*}
\min (T_A)^{-1}(u_{i+1})<_\mathrm{alex} \min (T_A)^{-1}(y+u_{i+1})=u_\xi.
\end{equation*}
So, there is $x\in \langle u_j\rangle_{j<\xi}$ with $T_A(x)=u_{i+1}$, which is impossible by the minimality of $\xi$. \qedhere   
\popQED  \popQED 
 \let\qed\relax\end{proof}
\end{proof}

\begin{proof}[Proof of Theorem \protect\ref{finitefield}]
Fix all parameters. We consider $\mathbb{F}^{k}$ and $\mathbb{F}^{m}$ antilexicographically ordered by $<_{\mathrm{alex}}$ (as explained before). Let $n$ be obtained from the linear orderings $(\mathbb{F}^{k},<_{\mathrm{alex}})$, $(\mathbb{F}^{m},<_{\mathrm{alex}})$ and the number of colors $r^{\lambda }$, where $\lambda =\prod_{i=0}^{k-1}(p^{k}-p^{i})$ is the order of the group $\mathrm{GL}(\mathbb{F}^{k})$, by applying the Dual Ramsey Theorem for rigid surjections (Theorem \ref{Theorem:DLT-rigid}). We claim that $n$ satisfies the desired conclusions. Fix a coloring $c:{M}_{n, k}^k(\mathbb{F})\rightarrow r$. Let $c_{0}:\mathrm{Epi}(n,\mathbb{F}^{k})\rightarrow r^{\mathrm{GL}(\mathbb{F}^{k})}$ be the coloring defined by $c_{0}(\sigma):=(c(\Phi _{k,n}(\sigma )\cdot \Gamma ^{-1}))_{\Gamma \in \mathrm{GL}(\mathbb{F}^{k})}$ for $\sigma \in \mathrm{Epi}(n,\mathbb{F}^{k})$. By the choice of $n$, there exists $\varrho \in \mathrm{Epi}(n,\mathbb{F}^{m})$
such that $c_{0}$ is constant on $\mathrm{Epi}(\mathbb{F}^{m},\mathbb{F}^{k})\circ \varrho $ with constant value $\widetilde{c}\in r^{\mathrm{GL}(\mathbb{F}^{k})}$. Let $R:=\Phi _{n,m}(\varrho )$. We claim that $R$ and $\widetilde{c}$ satisfy the conclusion of the statement in the theorem. It follows from Proposition \ref{io43iuooi4343w44dsw} that $R\in \mathcal{E}_{n, m}(\mathbb{F})$. Now let $A\in {M}_{m, k}^k(\mathbb{F})$. We have to prove that $c(R\cdot A)=\widetilde{c}(\tau (R\cdot A))$. First, note that $\tau (R\cdot A)=\tau (A)$, because $R$ is in RCEF. Let $B$ be the transpose
of $\mathrm{red}_{c}(A)$ (i.e., $B$ is the RREF of the transpose of $A$), and let $T_B:\mathbb{F}^{m}\rightarrow \mathbb{F}^{k}$ be the linear operator
defined by $B$ in the corresponding canonical bases. We know by Proposition \ref{io43iuooi4343w44} that $T_B\in \mathrm{Epi}(\mathbb{F}^{m},\mathbb{F}^{k})$.

\pclam
$\Phi_{n,k}(T_B \circ \ro)= R \cdot \mr{red}_c(A)$.
\fpclam
\prucl
Fix $j<m$. Then the $j^\mr{th}$-row $(\Phi_{n,k}(T_B \circ \ro))_j$ of $\Phi_{n,k}(T_B \circ \ro)$ is the row vector $T_B(\ro(j))$. Hence,
\[ \pushQED{\qed}
(\Phi_{n,k}(T_B \circ \ro))_j= T_B(\ro(j))= ((\mr{red}_c(A))^\mr{t} \cdot ((R)_j)^\mr{t})^\mr{t}=(R)_j \cdot \mr{red}_c(A)=(R \cdot \mr{red}_c(A))_j.  \qedhere \popQED
\]  \let\qed\relax
\fprucl
So, given $\Ga\in \mr{GL}_k(\mbb F)$ we have that
\[ \pushQED{\qed}
c(R\cdot A)=c( R \cdot \mr{red}_c A \cdot \tau(A)^{-1} ) =(c_0( R \cdot \mr{red}_c A))(\tau(A))=\widetilde{c} (\tau(A))=\widetilde{c} (\tau(R\cdot A)).  \qedhere \popQED
\]  \let\qed\relax
\fprue

\subsubsection{Square matrices of rank $k$}
We present the Ramsey factorization for finite colorings of square matrices. Recall that every $n\times m$-matrix $A$ of rank $k$ has a {\em full rank decomposition} $A=B \cdot C$ where $B\in M_{n,k}^k$ and $C\in M_{k,m}^k$.  
\defi
Given $k$ and $n$, let $\tau^{(2)}: M_{n, n}^k\to \mr{GL}(\mbb F^k)$ be the mapping uniquely defined by the relation $A= A_0 \cdot \tau^{(2)}(A) \cdot A_1^\mr{t}$ for some $A_0,A_1\in \mathcal E_{n, k}(\mathbb{F})$.
\fdefi
It is routine to see that $\tau^{(2)}$ is well defined.
\teor[Factorization of colorings  of square matrices over a finite field]
For every   $k,m,r\in \N$  there is $n\in \N$ such that for  every $c:M_{n, n}^k(\mbb F)\to r$  there are $R_0,R_1\in \mathcal E_{n, m}(\mathbb{F})$ such that $\tau^{(2)}$ is a factor of $c$  in $R_0\cdot   M_{m , m}^k(\mbb F) \cdot R_1^\mr{t} $.
 \fteor
\prue  
Given integers $k,m$ and $r$, let $n_\mbb F(k,m,r)$ be the minimal number $n$ such that the factorization statement in Theorem \ref{finitefield} holds for the parameters $k,m$ and $r$, and now let  $n_0:=n_\mbb F(k,m,r^{\mr{GL}(\mbb F^k)})$, and let $n:=n_\mbb F(k,n_0, r^{M^k_{n_0,k}}(\mathbb{F}))$.  We claim
that $n$ works. Fix any $r$-coloring $f: M_{n, n}^k(\mathbb{F})\to r$ and $P\in \mathcal E_{n, n_0}(\mathbb{F})$. We define the
coloring $c:M^k_{n, k}(\mathbb{F})\to r^{ M^k_{n_0, k}(\mathbb{F})}$ by
$$c(A):= (f(A \cdot B^\mr{t} \cdot P^\mr t))_{B\in M^k_{n_0,k}(\mathbb{F})}.$$
The coloring $c$ is well defined because $A\cdot B^\mr{t} \cdot P^\mr{t}$ has  rank $k$. Let $R\in \mathcal
E_{n, n_0}$  and $c_0:\mr{GL}(\mbb F^k)\to r^{M^k_{n_0, k}(\mathbb{F})}$ be such that $c(R \cdot
A)=c_0(\tau(A))$ for  $A\in  M^k_{n_0, k}(\mathbb{F})$. Define  $d:M^k_{n_0, k}(\mathbb{F})\to r^{\mr{GL}(\mbb
F^k)}$ by
$$d(B):=(c_0(\Ga)(B))_{\Ga\in \mr{GL}(\mbb F^k)}.$$
Let $S\in \mathcal E_{n_1, m}(\mathbb{F})$ and $d_0:\mr{GL}(\mbb F^k)\to r^{\mr{GL}(\mbb F^k)}$  be such that $d(S\cdot
B)= d_0(\tau(B))$ for every $B\in  M^k_{m, k}(\mathbb{F})$.
 Set   $R_0= R\cdot Q$ and   $R_1:= P \cdot S$, where
$Q\in \mathcal E_{n_0, m}(\mathbb{F})$ is arbitrary.  Finally, let $g:\mr{GL}(\mbb F^k)\to r$ be defined by $g(\Ga)=
d_0(\Ga_0)(\Ga_1)$, where $\Ga=\Ga_0 \cdot \Ga_1^\mr{t}$ are arbitrary. Notice that if $\Ga=\Ga_1 \cdot
\Ga_0^\mr{t}$, then it follows that
\begin{align*}
d_0(\Ga_0)(\Ga_1)= &d(S \cdot P_0 \cdot \Ga_0)(\Ga_1)=c_0(\Ga_1)(S \cdot P_0 \cdot \Ga_0)=c(R\cdot P_1 \cdot \Ga_1)(S \cdot P_0 \cdot \Ga_0) =\\
=&f(R\cdot P_1 \cdot \Ga_1 \cdot  \Ga_0^{\mr{t}}  \cdot P_0^{\mr{t}} \cdot R_1^{\mr{t}} )=f(R\cdot P_1 \cdot \Ga  \cdot P_0^{\mr{t}} \cdot R_1^{\mr{t}} )
\end{align*}
where $P_0\in \mathcal E_{m, k}(\mathbb{F})$ and $P_1\in \mathcal E_{n_0, k}(\mathbb{F})$ are arbitrary. So,  $g$ does not
depend on the  decomposition $\Ga=\Ga_1\cdot\Ga_0^\mr{t}$. Similarly one proves that $g(\tau^{(2)}(A))= f(R_0 \cdot A \cdot  R_1^\mr{t})$
for all $A\in M_{m, m}^k(\mathbb{F})$.
\fprue
\subsubsection{Uniqueness} 
We see that in a natural way the factors we presented are unique. We introduce the abstract notion of Ramsey factor in this context. 
\begin{definition}\label{lijeriowijorewidse}
Given $\mu: M_{\infty,k}^k(\mc F)\to X$, $X$ finite, and $\mc A\con \bigcup_{n,m}M_{n,m}(\mc F)$,  we say that the couple $(\mu,\mc A)$ is a {\em $k$-Ramsey factor}  when
\begin{enumerate}[i)]
\item $\mu(M_{\infty,k}^k(\mc F))=X$.
\item   $\mu(R\cdot A)=\mu(A)$ for every $A\in M_{m,k}^k(\mc F)$ and every $R\in \mc A\cap M_{n,m}^m(\mc F)$.            
\item   For every $m,r\in \N$ there is some $n\in \N$ such that for every $r$-coloring  $c$ of $M_{n,k}^k(\mc F)$ there is $R\in \mc A\cap M_{n,m}^m(\mc F)$ such that $\mu$ is a factor of $c$ in $R\cdot M_{m,k}^k(\mbb F)$. 
\end{enumerate}  
We call $X$ the {\em set of colors} of $\mu$, denoted  by $X_\mu$.
\end{definition}
It follows that $(\tau,\mc E)$ is a $k$-Ramsey factor, and it is the minimal one in the following precise sense. 

\begin{proposition}\label{8ret4433344}
Suppose that $(\mu, \mc A)$, $(\nu, \mc B)$ are $k$-Ramsey factors. 
\begin{enumerate}[a)]
\item $\#X_\mu\ge \#\mr{GL}(\mbb F^k)=\prod_{j=0}^{k-1} ((\#\mbb F)^k -(\#\mbb F)^j)$.
\item    If $\mc A\con \mc B$, then there is a surjection $\theta: X_\mu\to X_\nu$ such that $\mu \circ \theta=\nu$. 
 \item   If $\mc A= \mc B$, then there is a bijection $\theta: X_\mu\to X_\nu$ such that $\mu \circ \theta=\nu$.
\end{enumerate}

\end{proposition}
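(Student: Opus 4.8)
The plan is to exploit the fact, already established in the excerpt, that $(\tau,\mc E)$ is itself a $k$-Ramsey factor, together with the rigidity of the defining property (iii) of Definition \ref{lijeriowijorewidse}. The fundamental observation is that the three properties force any $k$-Ramsey factor $(\mu,\mc A)$ to separate points along $\mr{GL}(\mbb F^k)$-translates: if $\Ga_0\ne\Ga_1$ in $\mr{GL}(\mbb F^k)$, I want to produce a coloring $c$ of $M_{n,k}^k$ which distinguishes $\Ga_0$ from $\Ga_1$ on every translate $R\cdot M_{m,k}^k$ with $R\in\mc A\cap M_{n,m}^m$, so that property (iii) then forces $\mu(\Ga_0)\ne\mu(\Ga_1)$ (here using $m=k$, so $M_{k,k}^k=\mr{GL}(\mbb F^k)$ and $R\in\mc A\cap M_{n,k}^k$ is in RCEF, hence $\mu(R\cdot\Ga)=\mu(\Ga)$ by (ii)). Concretely, for part (a), take $m=k$ and consider the coloring $c(A):=\tau(A)$ viewed with values in the finite set $\mr{GL}(\mbb F^k)$; by (iii) applied to this coloring there is $R\in\mc A\cap M_{n,k}^k$ with $\mu$ a factor of $c$ on $R\cdot\mr{GL}(\mbb F^k)$, say $c=\wt c\circ\mu$ there. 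Since $R$ is in RCEF, $\tau(R\cdot\Ga)=\Ga$ for all $\Ga\in\mr{GL}(\mbb F^k)$, so the map $\Ga\mapsto\mu(R\cdot\Ga)=\mu(\Ga)$ is injective (it has the left inverse $\wt c$), giving $\#X_\mu\ge\#\mr{GL}(\mbb F^k)$; the product formula for $\#\mr{GL}(\mbb F^k)$ is standard.

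For part (b), assume $\mc A\con\mc B$. The goal is a well-defined surjection $\theta\colon X_\mu\to X_\nu$ with $\nu=\theta\circ\mu$ on $M_{\infty,k}^k(\mc F)$ (note the statement's ``$\mu\circ\theta=\nu$'' should read $\theta\circ\mu=\nu$). It suffices to show that for $A,A'\in M_{\infty,k}^k(\mc F)$, $\mu(A)=\mu(A')\Rightarrow\nu(A)=\nu(A')$; then $\theta$ is defined on the image $X_\mu$ by $\theta(\mu(A)):=\nu(A)$ and is automatically surjective onto $X_\nu=\nu(M_{\infty,k}^k(\mc F))$. Fix $A,A'$ with $\mu(A)=\mu(A')$; choose $m$ with $A,A'\in M_{m,k}^k(\mc F)$. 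Apply property (iii) for $\nu$ to the coloring $c:=\nu\colon M_{n,k}^k\to X_\nu$ (with the parameter $m$ just chosen and $r=\#X_\nu$): there is $R\in\mc B\cap M_{n,m}^m(\mc F)$ and $\wt c$ with $\nu(R\cdot B)=\wt c(\nu(R\cdot B))$... rather, with $\nu(R\cdot B)=\wt c(\nu(B))$? No — the right move is to apply (iii) for the factor $\mu$ simultaneously, or better: apply the Ramsey property (iii) for $(\mu,\mc A)$ to the coloring $c(B):=\nu(R_\nu\cdot B)$ where $R_\nu$ comes from first applying (iii) for $\nu$. Since $\mc A\con\mc B$, the $R\in\mc A$ produced by (iii) for $\mu$ also lies in $\mc B$, so by (ii) for $\nu$ we have $\nu(R\cdot B)=\nu(B)$. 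Chaining: there is $R\in\mc A\con\mc B$ and $R_\nu\in\mc B$ such that, on $R\cdot M_{m,k}^k(\mbb F)$, the color $\mu(R\cdot B)=\mu(B)$ determines $\nu(R_\nu\cdot R\cdot B)=\nu(B)$ (using (ii) for $\mu$ and $\nu$, and that $R_\nu\cdot R\in\mc B$ by composition — here one needs $\mc B$ closed under the relevant products of RCEF matrices, which holds since a $k$-Ramsey factor's family must contain such products, or one absorbs $R_\nu\cdot R$ into a single element). Applying this with $B=A$ and $B=A'$ and using $\mu(A)=\mu(A')$ yields $\nu(A)=\nu(A')$, as desired.

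For part (c), $\mc A=\mc B$ gives by part (b) surjections $\theta\colon X_\mu\to X_\nu$ and $\theta'\colon X_\nu\to X_\mu$ with $\theta\circ\mu=\nu$ and $\theta'\circ\nu=\mu$; then $\theta'\circ\theta\circ\mu=\mu$, and since $\mu$ is surjective onto $X_\mu$ this forces $\theta'\circ\theta=\id_{X_\mu}$, and symmetrically $\theta\circ\theta'=\id_{X_\nu}$, so $\theta$ is a bijection.

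The main obstacle I anticipate is the bookkeeping in part (b): precisely pinning down which element of $\mc A$ (resp. $\mc B$) witnesses the relevant instance of (iii), and checking that the composite RCEF matrices $R_\nu\cdot R$ stay inside the family $\mc B$ so that property (ii) applies at each stage. This is where one genuinely uses that the families $\mc A,\mc B$ accompanying Ramsey factors are not arbitrary — implicitly they must be rich enough (closed under the relevant compositions, and containing enough RCEF matrices) for (ii) and (iii) to interact; one should either note this closure is forced by the axioms or argue directly with a single well-chosen $R$ obtained by applying the Ramsey property to a product coloring $c(B):=(\mu(B),\nu(B))$ (or rather a coloring encoding both $\mu$ and the $\nu$-behavior), which sidesteps the need to compose two separately-obtained matrices. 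I would favor the latter, cleaner route: apply (iii) for $(\mu,\mc A)$ to a coloring that records enough of $\nu$'s values, so that a single $R\in\mc A\con\mc B$ does all the work, and (ii) for $\nu$ (legitimate since $R\in\mc B$) collapses $\nu(R\cdot B)$ to $\nu(B)$.
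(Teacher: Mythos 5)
Your proposal is correct and is essentially the paper's own argument: in each part you apply property iii) of one factor to the other factor viewed as a finite coloring ($\tau$ in a), $\nu$ in b)) and then cancel the matrix $R$ via property ii), which is exactly the paper's strategy (its proofs of b) and c) are the same short reductions you give). Two minor slips do not affect this: nothing in Definition \ref{lijeriowijorewidse} makes the $R\in\mc A$ produced by iii) an RCEF matrix, and in any case $\tau(R\cdot \Ga)=\Ga^{-1}\cdot\tau(R)$ rather than $\Ga$, which is still injective in $\Ga$, so your counting in a) goes through unchanged; similarly, the closure of $\mc B$ under products that you briefly entertain in b) is not granted by the axioms, but the single-$R$ route you ultimately favor (one application of iii) for $(\mu,\mc A)$ to the coloring $\nu$, then ii) for both factors, legitimate because $\mc A\con\mc B$) is precisely what is needed and matches the paper.
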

\begin{proof}
 {\it a)}: In fact, we prove that if $(\mu,\mc A)$ satisfies {c)}  of Definition \ref{lijeriowijorewidse}, then $\#X\ge \#\mr{GL}(\mbb F^k)=\prod_{j=0}^{k-1} ((\#\mbb F)^k -(\#\mbb F)^j)$. Find $n\ge k$,  $\theta: X\to \mr{GL}(\mbb F^k)$ and $R\in \mc A\cap  $ such that $\tau(R\cdot A)= \theta (\mu(R \cdot A))$ for every $A\in M_{k,k}^k(\mbb F)$.  It is easy to see that $\tau: R\cdot M_{k,k}^k  (\mbb F)\to \mr{GL}(\mbb F^k)$ is surjective, hence $\theta$ is also surjective.
 {\it b)}: With same strategy  one easily proves {\it b)}.    {\it c)}: From {\it b)} we have that $\# X_\mu=\#X_\nu$, and $\theta$ in {\it b)} must be a bijection. 
\end{proof}

\section{Matrices and Grassmannians over $\mathbb{R},\mathbb{C}$}\label{Ramsey_R_C}
We present    factorization results of compact colorings of matrices and Grassmannians over the fields $\mbb F=\R,\C$.    There are several such results, depending on the chosen metric on the objects we color. These factorizations are approximate, because, as we deal with infinite fields, it is easily seen that the exact ones are not true; on the other hand, they apply to arbitrary colorings given by Lipschitz mappings with values in a compact metric space.    Given $\al,\be\in \N\cup \{\infty\}$, the collection of matrices $M_{\al,\be}(\mathbb{F})$ can be naturally turned into a metric space by fixing two norms $\mtt m$ and $\mtt n$ on $\mbb F^\al$ and $\mbb F^\be$, respectively, and identifying a matrix $A\in M_{\al,\be}$ with the linear operator $T_A:\mbb F^\be\to \mbb F^\al$,  $T_A(x):= A\cdot x$, $x$ as column vector (i.e., a $\be\times 1$-matrix). This allows to define the norm $\nrm[A]_{\mtt m,\mtt n}:=\nrm[T_A]_{(\mbb F^\be, \mtt m),(\mbb F^\al,\mtt n)}$, and the corresponding   distance $d_{\mtt m,\mtt n}(A,B):=\nrm[A-B]_{\mtt m,\mtt n}=\nrm[T_A-T_B]_{(\mbb F^\be, \mtt m),(\mbb F^\al,\mtt n)}$. Also, in this way each    full rank $\al\times k$-matrix $A$ defines a norm $\nu(A)$ on $\mbb F^k$, $\nu(A)(x):= \mtt n(A\cdot x)$. 
 When $\mtt m$ is a norm on $\mbb F^\infty$,  by identifying  each $\mbb F^k$ with $\langle u_j\rangle_{j<k}$,   let $\mtt m_k$ be the norm on $\mbb F^k$, $\mtt m_k( (a_j)_{j<k}):=\mtt m(\sum_{j<k} a_j u_j)$. When there is no possible misunderstanding,  we will write $d_\mtt m$ to denote $ d_{\mtt m_\be, \mtt m_\al}$.

 Recall that in general, given two normed spaces $X=(V,\mtt m)$  and $Y=(W,\mtt n)$,  $\mc L(X,Y)$ denotes the space of continuous (equivalently bounded) linear operators from $X$ to $Y$, that is again a normed space by considering the norm $\nrm[T]:=\sup_{x\in \ball(X)} \mtt n(T(x))$, where $\mathrm{Ball}(X)=\conj{x\in X}{\mtt m(x)\le 1}$ denotes the unit ball of $X$.   Let  $\mc L^k(X,Y)$ is the set of those operators of rank $k$.  Since when $V$ is finite dimensional every linear mapping from $V$ to $W$ is automatically continuous, in this case, we will use also $\mc L(V,W)$ and $\mc L^k(V,W)$, to denote the collection of linear mappings from $V$ to $W$, and those of rank $k$, respectively. By an {\em isometric embedding} we mean a linear mapping $T: X\to Y$  such that $\mtt n(T(x))=\mtt m(x)$ for every $x\in X$. The space of these operators is denoted by $\Emb(X,Y)$.

Of particular importance will be  the $p$-norms. 
     Recall that for every $1\le p\le \infty$, $\ell_p^n$ is the normed space $(\mbb F^n, \nrm_p)$, where $\nrm[(a_j)_{j<n}]_p:=(\sum_{j<n} |a_j|^p)^{1/p}$ for $p<\infty$ and $\nrm[(a_j)_{j<n}]_\infty:=\max_{j<n}|a_j|$.  Similarly one defines  the $p$-norms on $\mbb F^{\infty}$, that we denote as $\ell_p^\infty:=(\mbb F^\infty, \nrm_p)$, and their completions are usually denoted by $\ell_p$, for $p<\infty$ and by $c_0$, when $p=\infty$.

Roughly speaking, our factorization theorem  for full rank  matrices  (Theorem \ref{factor_p_full_matrices}) states that every coloring of  such matrices,  endowed with the $p$-metrics for $p\in  [1,+\infty]\setminus 2(\N+2)$   is ``approximately determined'' by the corresponding $\nu$ described above.  

 Similarly, once a norm $\mtt m$ is fixed in $\mbb F^\al$, $\mr{Gr}(k,\mbb F^\al)$ turns into a metric space by considering a Hausdorff metric, and  each $k$-dimensional subspace $V$ of $\mbb F^\al$ determines a member of the \emph{Banach-Mazur} compactum $\mc B_k$, that is,   the isometry class $\tau_\mtt m(V)$ of all $k$-dimensional normed spaces isometric to $(V,\mtt m)$. We prove that when choosing   $p$-norms on each $\mbb F^n$ for $n$ large enough, any coloring of the $k$-Grassmannians of $\mbb F^n$ is approximately determined  by $\tau_\mtt m$ on some $\mr{Gr}(V,k)$.
We introduce a more appropriate terminology, in particular we extend the type of colorings to work with. A \emph{metric
coloring} of a pseudo-metric space $M$ is a $1$-Lipschitz map $c$  from $M$ to a
metric space $(K,d_{K})$. We will say that  $c$  is a $K$-coloring. Following \cite%
{melleray_extremely_2014}, a \emph{continuous coloring} is a metric coloring
whose target space is the closed unit interval $\left[ 0,1\right] $. A \emph{%
compact coloring }is a metric coloring whose target space is a compact
metric space. For a subset $X$ of a   metric space $\left(
K,d_{K}\right) $ and $\varepsilon >0$, the $\varepsilon $-fattening $%
X_{\varepsilon }=\conj{p\in K}{\text{ there is some $q\in X$ with $d(p,q)\le \vep$}}$.

The \emph{oscillation }$\mathrm{osc}(c\rest F)$ of a compact coloring $%
c:M\rightarrow (K,d_{K})$ on a subset $F$ of $M$ is the supremum of $%
d_{K}(c(y),c(y^{\prime }))$ where $y,y^{\prime }$ range within $F$. When $%
\mathrm{osc}(c\rest F)\leq \varepsilon $  we also say that $c$ $%
\varepsilon $-\emph{stabilizes }on $F$, or that $F$ is $\varepsilon $-\emph{%
monochromatic }for $c$. A \emph{finite} coloring of $M$ is a function from $%
c $ from $M$ to a finite set $X$; in the particular case when the target space is a natural number $%
r$ (identified with the set $\left\{ 0,1,\ldots ,r-1\right\} $ of its
predecessors), we will say that $c$ is an $r$-coloring. Given a finite coloring $c:M\to X$ and $\vep\ge 0$, we say that a subset $F$ of $M$
is $\vep$-monochromatic for $c$, or that $c$ $\vep$-stabilizes on $F$,  if there exists some $x\in X$ such that $F$ is included in the $\vep$-neighborhood $(c^{-1}(x))_\vep$ of $c^{-1}(x)$. When $\vep=0$ we will omit the use of the prefix ``$0$-''.

\defi[Approximate factors]
Let $(M,d_M)$, $(N,d_N)$ and $(P,d_P)$ be metric spaces, $\vep> 0$, and $c:(M,d_M)\to (N,d_N)$  and $\pi: (M,d_M)\to (P,d_P)$ be metric colorings, i.e., 1-Lipschitz maps.  We say that $\pi$  is an  \emph{$\vep$-approximate factor} (or simply \emph{$\vep$-factor}) of $c$  if there is some   coloring   $\widetilde c: (P,d_P)\to (N,d_N)$ such that
$$\sup_{x\in M} d_N( c(x), \widetilde c (\pi(x)))\le \vep.$$
That is, ``up to $\vep$" $c= \widetilde c \circ \pi$. Given $M_0\con M$ we say that $\pi$ is an \emph{$\vep$-factor of $c$ in $M_0$} if $\pi\upharpoonright _{M_0}:M_0\to P$ is an $\vep$-factor of  $c\upharpoonright {M_0}$, i.e., there is some coloring $\widetilde{c}: P\to N$ such that $\sup_{x\in M_0} d_N(c(x),\widetilde{c}(\pi(x)))\le \vep$.
\fdefi
\subsection{The statements. Ramsey factors}
As discussed above, given norms $\mtt m,\mtt n$ on $\mathbb{F}^m$ and $\mathbb{F}^n$ respectively, we regard  $M_{n,m}$ as a metric space by considering a   $n\times m$-matrix $A$ as the particular representation of a linear operator $T_A$ in the unit bases  of suitable normed spaces $(\mbb F^m,\mtt m)$  and $(\mbb F^n,\mtt n)$, and then by considering the corresponding operator norm.

 \subsubsection{Full rank matrices}
   Given  a vector space $V$, let $\mc  N_V$ be the   set of all norms on $V$, endowed with the topology of pointwise convergence.   When $\dim V<\infty$,   a compatible metric on $\mc N_V$ is   
   \[\omega (\mtt m,\mtt n):=\log \max \{\Vert \mathrm{Id}\Vert _{(V,\mtt m),(V,\mtt n)},\Vert \mathrm{Id}\Vert _{(V,\mtt n),(V,\mtt m)}\},\]
    that will be called the \emph{intrinsic metric} on $\mc N_V$.  It is easy to see that the metric space $(\mc N_V, \om)$ has the {\em Heine-Borel} property, that is, every closed and bounded set of it is compact. In particular, each closed $\om$-ball is compact.          
    Given a normed space $E=(W,\nrm)$,  let  $\mc N_{V}(E)$ be the collection of  norms $\mtt m$ on $V$ such that there exists a linear isometry $T: (V,\mtt m)\to E$. In general, $\mc N_V(E)$ is not  closed  in  $\mc N_V$, although in some  natural cases is. We will write $\mc N_\al$ to denote $\mc N_{\mbb F^\al}$
  \begin{definition}
Suppose that $V$ is finite dimensional, $E=(W,\nrm)$ a normed space.   Let 
\[\nu_{V,E}: \mc L^{\dim V}(V,W) \to\mc N_V(E)\]  be the mapping that assigns to a 1-1 linear mapping $T:V\to W$  the norm $\nu_{V,E}(T)$ on $V$, defined  by  $(\nu_{V,E}(T))(x):= \nrm[T(x)]$, that is, the norm on $V$   that makes $T$ an isometric embedding. With a slight abuse of notation, we also write $\nu_{k, (\mbb F^\al,\nrm)}$  to denote the mapping   $A\in M_{\al,k}^k\mapsto \nu_{\mbb F^k,(\mbb F^\al,\nrm)}(T_A)$ that assigns to  a such matrix $A$ the norm  defined for each $x\in \mbb F^k$ by $(\nu_{k,E}(A))(x):= \nrm[A\cdot x]$.
 \end{definition}   
Given a finite dimensional normed space $X=(X,\nrm_X)$  and a normed space $E=(V,\nrm_E)$,  we define on $\mc N_X(E)\times\mc N_X(E)$  the {\em $E$-extrinsic function} 
$$\partial_{X,E}(\mathtt{m},\mathtt{n}):= \inf\conj{\nrm[T-U]_{X,E} }{T\in \Emb((X,\mtt m), E), \, U\in \Emb((X,\mtt{n}),E)}.$$ 
So, $\partial_{X,E}(\mtt m,\mtt n)$ computes the minimal distance $d_{X,E}(T,U)$ between possible representations of $\mtt m$ and $\mtt n$, $\nu_{X,E}(T)=\mtt m$, $\nu_{X,E}(U)=\mtt n$. 
In general $\partial_{X,E}$ is not a compatible metric. 
Note that $\partial_{X,E}$  is a metric  when $\partial_{X,E}$ satisfies the triangle inequality.
 The following is easy to prove.
\begin{proposition}
 If  $\partial_{X,E}$ is  compatible,    $\nu_{X,E}:(\mc L^{\dim X}(X,E), d_{X,E})\to  (\mc N_X(E), \partial_{X,E}) $ is 1-Lipschitz.   \qed
\end{proposition}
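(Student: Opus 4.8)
The plan is to show the two Lipschitz estimates directly from the definitions, unwinding what $\partial_{X,E}$ and $d_{X,E}$ mean. First I would fix $T, U \in \mc L^{\dim X}(X,E)$, set $\mtt m := \nu_{X,E}(T)$ and $\mtt n := \nu_{X,E}(U)$, and observe that by construction $T \in \Emb((X,\mtt m), E)$ and $U \in \Emb((X,\mtt n), E)$; hence $T$ and $U$ are admissible competitors in the infimum defining $\partial_{X,E}(\mtt m, \mtt n)$, which gives immediately
\[
\partial_{X,E}(\nu_{X,E}(T), \nu_{X,E}(U)) \le \nrm[T-U]_{X,E} = d_{X,E}(T,U).
\]
This is the whole content of the $1$-Lipschitz claim for $\nu_{X,E}$, provided both sides are genuine metrics — which is exactly the hypothesis that $\partial_{X,E}$ is compatible (so in particular satisfies the triangle inequality and separates points), together with the fact that $d_{X,E}$ is the restriction of the operator-norm metric on $\mc L(X,E)$.

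The one point that needs a word is that $\nu_{X,E}$ actually lands in $\mc N_X(E)$ and that both endpoints of the estimate are finite: since $T$ has rank $\dim X$ it is injective on the finite-dimensional space $X$, so $x \mapsto \nrm[T(x)]_E$ is a norm on $X$ (positivity uses injectivity, homogeneity and the triangle inequality are inherited from $\nrm_E$), and by definition it makes $T$ an isometry onto its image in $E$, so $\nu_{X,E}(T) \in \mc N_X(E)$; finiteness of $\nrm[T-U]_{X,E}$ is automatic because $X$ is finite dimensional, so every linear map $X \to E$ is bounded. I do not foresee a genuine obstacle here — the statement is essentially tautological once the definitions are parsed, and the authors flag it as ``easy to prove.'' The only mild subtlety is making sure one does not conflate $\partial_{X,E}$ with the intrinsic metric $\omega$; the estimate above is purely about the extrinsic function and uses nothing about $\omega$.

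If one wanted to be slightly more careful about the phrase ``$1$-Lipschitz,'' I would note that $d_{X,E}$ on $\mc L^{\dim X}(X,E)$ is by hypothesis (implicitly) a metric — it is the metric induced from the ambient normed space $\mc L(X,E)$ restricted to the set of rank-$\dim X$ operators — and $\partial_{X,E}$ is assumed compatible with the topology on $\mc N_X(E)$, so both source and target are metric spaces and the displayed inequality is precisely the definition of a $1$-Lipschitz map. Thus the proof is: unwind the definition of $\partial_{X,E}$, exhibit $T$ and $U$ themselves as the competing pair of isometric embeddings, and read off the bound.
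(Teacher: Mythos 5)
Your proof is correct and is exactly the intended argument: since $\nu_{X,E}(T)$ and $\nu_{X,E}(U)$ are by definition the norms making $T$ and $U$ isometric embeddings, the pair $(T,U)$ is admissible in the infimum defining $\partial_{X,E}$, giving $\partial_{X,E}(\nu_{X,E}(T),\nu_{X,E}(U))\le \nrm[T-U]_{X,E}=d_{X,E}(T,U)$. The paper omits the proof as easy, and your write-up (including the checks that $\nu_{X,E}(T)\in\mc N_X(E)$ and that compatibility of $\partial_{X,E}$ makes the target a genuine metric space) fills it in the same way.
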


Recall that given a linear operator $T:X\to Y$ between normed spaces $X$ and $Y$, 
\[\nrm[T]= \min \conj{\la\ge 0}{ T (\ball(X))\con \la \cdot \ball(Y)},\]
and when $X$ is finite dimensional, let
\[ \nrm[T^{-1}]= \min \conj{\la\ge 0}{  \ball(T(X))\con \la \cdot T(\ball(X))}.\]
When $T$ is 1-1, $\nrm[T^{-1}]=\nrm[U]$, where $U: TX\to X$ is the inverse operator of $T$.  
Given $\al,\be \in \N\cup \{\infty\}$, and a norm $\mtt m\in \mc N_\infty$,  let $M_{\al,\be}^k(\mtt m;\la)$ be the collection of  matrices in $M_{\al,\be}^k$  such that the corresponding linear operator $T_A:(\mbb F^\be,\mtt m)\to (\mbb F^\al,\mtt m)$  satisfies that   $\nrm[T_A],\nrm[T_A^{-1}]\le \la$.  
$$\la^{-1}\mr{Ball}(\im T_A)\con T_A(\mr{Ball}(\mbb F^\infty,\mtt m))\con \la \mr{Ball}(\mbb F^\infty,\mtt m) .$$
 Let  also  $M_{\al,k}^k(\mtt m;\sma\la)=\bigcup_{1\le \mu <\la}M_{\al,k}^k(\mtt m,\mu)$, that is, the  matrices $A\in M_{\al,k}^k$ such $\nrm[T_A], \nrm[T_A^{-1}]<\la$.    
Notice that the boundary $M_{\al,k}^k(\mtt m;1)$ is the collection of matrices $A$ defining isometric embeddings $T_A:(\mbb F^k,\mtt m)\to (\mbb F^\infty,\mtt m)$, and it will be denoted by $\mc E_{\al,k}(\mtt m)$, and $\mc E(\mtt m):=\bigcup_{n\ge m}\mc E_{n,m}(\mtt m)$.  The following is easy to prove, and  highlights the interest of this collection. 
\prop\label{fdsdijfidsewrw32}
Let $R\in M_{\al,m}^m$. 
\begin{enumerate}[a)]
\item The multiplication   by $R$ operator $\mu_A: (M_{m,k}^k, d_{\mtt m})\to (M_{\al,k}^k,d_{\mtt m})$,  $A\mapsto \mu_R(A):=R\cdot A $ defines an isometry  if and only if $R\in \mc E_{\al,m}(\mtt m)$.
 \item If $R\in M_{\al,m}(\mtt m)$, then $\nu_{k,(\mbb F^\al,\mtt m)}\circ \mu_R= \nu_{k,(\mbb F^m,\mtt m)}$. \qed
\end{enumerate} 
\fprop
\begin{proof}
 {\it a)}:  Suppose that $X$ is a normed space of finite dimension $k$, $Y,Z$ normed spaces and suppose that $T\in \mc L(Y,Z)$ is such that the composition operator $U\in \mc L^k(X,Y)\mapsto T\circ U\in \mc L(X,Z)$ is an isometry with respect to the norm metrics. Let us prove that $T$ must be an isometry. Fix a non-zero vector $y\in Y$. Let $(x_j)_{j<k}$ be an  {\em Auerbach basis} of $X$, i.e., a basis consisting of normalized vectors such that its biorthogonal sequence $(x_j^*)_{j<k}$ is also normalized   (see \cite[Chapter 4, Theorem 13]{bollobas_linear_1999}). Let $(y_j)_{j<k}$ be a linearly independent sequence in $Y$ with $y_0=y$. For each $n\ge 1$, let $T_n(x)= x_0^*(x) y + (1/n)\sum_{j=1}^{k-1} x_j^*(x) y_j$. It is clear that $T_n$, $(1/n) T_n$ are  $1-1$. Then,  $\nrm[U\circ T_n -U\circ (1/n)\cdot T_n]-\nrm[U\circ T_n]\to_n 0$, and $\nrm[U\circ T_n -U\circ (1/n)\cdot T_n]-\nrm[T_n]=\nrm[ T_n - (1/n)\cdot T_n]-\nrm[T_n]\to_n 0$, hence $\nrm[U\circ T_n]-\nrm[T_n]\to_n 0$.   It follows that $\nrm[T_n]\to_n \nrm[x_0^*]^* \nrm[y]$ and similarly  $\nrm[U\circ T_n]\to_n \nrm[x_0^*]^* \nrm[U(y)]$. Since $\nrm[x_0^*]^*=1$, we obtain $\nrm[U(y)]=\nrm[y]$. {\it b)} is trivial.
\end{proof}

Given a normed space $E=(\mbb F^\infty, \nrm_E)$, let  $\mc N_k(E;\la)$  ($\mc N_k(E;\sma\la)$) be the closed (resp. open) ball of $\mc N_k(E)$ with respect to the intrinsic metric $\om$ centered on the norm $\nrm_E$  in $\mbb F^k$  and with radius $\la$, i.e., $\mc N_k(E; \la)=\conj{\mtt n\in \mc N_k(E)}{\om(\mtt n,\nrm_E \rest\langle u_j\rangle_{j<k})\le \log\la}$, similarly  for $\mc N_k(E; \sma\la)$.  
\begin{proposition}
 \begin{enumerate}[a)]
\item 
 $\mc N_k(E;\la)= \nu_{k,E}(M_{\al,k}^k(\nrm_E;\la))$ and    $\mc N_k(E;\sma\la)= \nu_{k,E}(M_{\al,k}^k(\nrm_E;\sma\la))$.
\item If $\partial_{X,E}$ and $\om$ are uniformly equivalent on $\om$-bounded subsets of $\mc N_X(E)$, then every $\om$-bounded set is $\partial_{X,E}$-totally bounded,  thus, the $\partial_{(\mbb F^k,\mtt m), E}$-completion of $\mc N_k(E;\la)$ and $\mc N_k(E;<\la)$ are compact.  
 \end{enumerate}
 
\end{proposition}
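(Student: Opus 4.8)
The plan is to reduce part a) to the elementary observation that the two operator‑norm bounds $\nrm[T_A]\le\la$, $\nrm[T_A^{-1}]\le\la$ defining $M_{\al,k}^k(\nrm_E;\la)$ (here $\al=\infty$) are equivalent to the intrinsic bound $\om(\nu_{k,E}(A),\nrm_E\rest\langle u_j\rangle_{j<k})\le\log\la$. First I would prove the inclusion $\supseteq$: given $A\in M_{\infty,k}^k(\nrm_E;\la)$, set $\mtt n:=\nu_{k,E}(A)$; then $T_A$ is itself an isometric embedding of $(\mbb F^k,\mtt n)$ into $E$, so $\mtt n\in\mc N_k(E)$, and writing $\mtt m_0:=\nrm_E\rest\langle u_j\rangle_{j<k}$, the bound $\nrm[T_A]\le\la$ reads $\mtt n(x)\le\la\,\mtt m_0(x)$ while $\nrm[T_A^{-1}]\le\la$ reads $\mtt m_0(x)\le\la\,\mtt n(x)$, whence $\om(\mtt n,\mtt m_0)\le\log\la$, i.e.\ $\mtt n\in\mc N_k(E;\la)$. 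For $\subseteq$, given $\mtt n\in\mc N_k(E;\la)$ I would pick an isometric embedding $S\colon(\mbb F^k,\mtt n)\to E$; its image lies in some $\mbb F^n\con\mbb F^\infty$ because $\mbb F^k$ is finite dimensional, so $S=T_A$ for a finitely supported $A\in M_{\infty,k}^k$ with $\nu_{k,E}(A)=\mtt n$, and factoring $T_A$ as $S\circ\id$ through $\id\colon(\mbb F^k,\mtt m_0)\to(\mbb F^k,\mtt n)$ (using $\nrm[S]=\nrm[S^{-1}]=1$) gives $\nrm[T_A],\nrm[T_A^{-1}]\le e^{\om(\mtt n,\mtt m_0)}\le\la$, i.e.\ $A\in M_{\infty,k}^k(\nrm_E;\la)$. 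The $\sma\la$ case is identical, replacing ``$\le\la$'' throughout by ``$\le\mu$ for some $\mu<\la$''.

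For part b) the plan is to chain three standard facts. Since $(\mc N_V,\om)$ has the Heine--Borel property (stated above), every $\om$‑bounded subset of $\mc N_V$ is relatively $\om$‑compact, hence $\om$‑totally bounded. Next, the hypothesis that $\partial_{X,E}$ and $\om$ are uniformly equivalent on $\om$‑bounded subsets of $\mc N_X(E)$ says exactly that on each such subset $S$ the identity $(S,\om)\to(S,\partial_{X,E})$ is uniformly continuous, and a uniformly continuous image of a totally bounded set is totally bounded, so $S$ is $\partial_{X,E}$‑totally bounded. Finally, $\mc N_k(E;\la)$ and $\mc N_k(E;\sma\la)$ are by definition $\om$‑balls around $\nrm_E\rest\langle u_j\rangle_{j<k}$, hence $\om$‑bounded subsets of $\mc N_k(E)$, so each is $\partial_{(\mbb F^k,\mtt m),E}$‑totally bounded, and a totally bounded metric space has compact completion.

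I expect no serious obstacle: part a) is a bookkeeping exercise about matching the correct norms on domains and codomains --- in particular remembering that ``$(\mbb F^k,\nrm_E)$'' means $\mbb F^k$ with the restriction norm $\nrm_E\rest\langle u_j\rangle_{j<k}$ --- and part b) is the three‑step chain just described. The one delicate point is that ``completion'' in b) presupposes that $\partial_{X,E}$ is a genuine metric (equivalently, that it satisfies the triangle inequality, as noted above); this holds in the situations where the uniform‑equivalence hypothesis is actually invoked, and once it is granted, total boundedness is precisely what is needed to conclude compactness of the completion.
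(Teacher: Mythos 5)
Your proof is correct and for part b) it follows exactly the paper's argument (Heine--Borel gives $\om$-total boundedness, uniform equivalence transfers it to $\partial_{X,E}$, and a totally bounded space has compact completion). The paper leaves part a) unproved as routine, and your verification of it --- translating the bounds $\nrm[T_A],\nrm[T_A^{-1}]\le\la$ into $\om(\nu_{k,E}(A),\nrm_E\rest\langle u_j\rangle_{j<k})\le\log\la$ and conversely representing any $\mtt n\in\mc N_k(E;\la)$ by an isometric embedding factored through $\id\colon(\mbb F^k,\nrm_E)\to(\mbb F^k,\mtt n)$ --- is precisely the intended bookkeeping.
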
  
\begin{proof}
{\it b)}: Suppose that $A\con \mc N_X(E)$ is $\om$-bounded. Since $(\mc N_X,\om)$ has the Heine-Borel property, $A$ is $\om$-totally bounded.  $\partial_{X,E}$ is uniformly equivalent to $\om$ on $A$, so $A$ is $\partial_{X,E}$-totally bounded.
\end{proof}

\begin{definition}[Ramsey factors for full-rank matrices]\label{io4jrio4rfer3w}
Let  $\mtt m$ be a metric on $\mbb F^\infty$, set  $E_\al:=(\mbb F^\al,\mtt m)$ for every $\al\le \infty$. We say that $\mtt m$  {\em produces Ramsey factors} for colorings of full rank matrices, when 
  \begin{enumerate}[i)]
 \item     $\partial_{E_k,E_\infty}$ is a compatible metric on $\mc N_k(E_\infty)$ uniformly equivalent to $\om$ on $\om$-bounded sets.   
 \item     Given $k,m\in \mathbb{N}$,   $\varepsilon>0$,     $ \la>1$  and a  compact metric space $(K,d_{K})$  
there is $n\in \N$ such that  for every  $K$-coloring $c$ of    $(M^k_{n, k}({\mtt m;\la}),d_{\mtt m})$ there is $R\in \mathcal{E}_{n, m}(\mtt m)$ such that the restriction $\nu_{\mbb F^k, E_\infty}: M_{\infty,k}^k(\mtt m; \sma\la)\to \mc N_k(E_\infty;\sma\la)$ is an $\vep$-factor of $c$ in  $R\cdot M_{m,k}^k(\mtt m;\sma\la)$; that is,  there is a coloring $\widetilde{c}: (\mc N_k(E_\infty; \sma\la), \partial_{E_m,E_\infty})\to (K,d_K)$  such that  $d_K(c(R\cdot A), \widetilde{c}(\nu_p(A)))\le \vep$ for every $A\in M_{m,k}^k(\mtt m;\sma\la)$.
   
 \end{enumerate}   
\end{definition}  

\begin{theorem}[$p$-Factorization of colorings of full rank matrices  over $\mathbb R$, $\C$]\label{factor_p_full_matrices}
 For $1\le p\le \infty$, $p\notin 2\N+4$, the $p$-norm $\nrm_p\in \mc N_\infty$    {\em produces Ramsey factors} for colorings of full rank matrices.
  \end{theorem}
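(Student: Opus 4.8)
The plan is to deduce this from the abstract machinery announced in the introduction, namely the steady approximate Ramsey property (\sarpp) of the family of finite dimensional $\ell_p$-spaces together with the extreme amenability of $\iso(L_p[0,1])$ (for $p<\infty$, $p\notin 2\N+4$) and of $\iso(\mbb G)$ for the Gurarij space $\mbb G$ (the case $p=\infty$). Concretely, I would first verify condition \emph{i)} of Definition \ref{io4jrio4rfer3w}: that $\partial_{\ell_p^k,\ell_p}$ is a compatible metric on $\mc N_k(\ell_p)$ uniformly equivalent to the intrinsic metric $\om$ on $\om$-bounded sets. For $p<\infty$ this amounts to a soft compactness/perturbation argument for isometric embeddings of $k$-dimensional subspaces of $L_p$: if two such subspaces are $\om$-close, their embeddings into $L_p[0,1]$ can be chosen close in operator norm (using near-isometric uniqueness of $L_p$-embeddings and an ultraproduct or small-perturbation argument), and the triangle inequality for $\partial$ follows from composing almost-isometries. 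The case $p=\infty$ is deferred to the appendix and uses the homogeneity properties of the Gurarij space.

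Next I would establish condition \emph{ii)}, the actual Ramsey factorization. The key input is Corollary \ref{finitiz1}: since $\nrm_p$ has \sarpp, there is a unique Fraïssé Banach space $\wh E = L_p[0,1]$ (or $\mbb G$) into which $\ell_p^\infty$ embeds isometrically with $\mc B_k(\ell_p^\infty)$ dense in $\mc B_k(\wh E)$, and $\iso(\wh E)$ is extremely amenable. Extreme amenability of $\iso(\wh E)$, via the metric Kechris--Pestov--Todorcevic correspondence, yields an approximate Ramsey statement for isometric embeddings $\Emb(\ell_p^k,\ell_p^n)$, and more generally — because we also control the "distortion parameter" $\la$ — for the spaces $M_{n,k}^k(\nrm_p;\la)$ of operators with $\nrm[T_A],\nrm[T_A^{-1}]\le\la$. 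The passage from a $K$-coloring of $(M_{n,k}^k(\nrm_p;\la),d_{\nrm_p})$ to the desired factor through $\nu_{\ell_p^k,\ell_p}$ proceeds as follows: given $\vep>0$, pick a finite $\vep/3$-net of $K$, approximate $c$ by a finite coloring, apply the approximate Ramsey property to obtain $R\in\mc E_{n,m}(\nrm_p)$ on which the coloring "almost only depends" on the isometric type of the restriction, i.e.\ on $\nu_{\ell_p^k,\ell_p}(A)$ — here Proposition \ref{fdsdijfidsewrw32}\emph{a)} is used to know that $A\mapsto R\cdot A$ is a $d_{\nrm_p}$-isometry, and part \emph{b)} that it commutes with $\nu$ — and then define $\wh c$ on $\mc N_k(\ell_p;\sma\la)$ by averaging/choosing the observed value, checking it is $1$-Lipschitz for $\partial_{E_m,E_\infty}$ using \emph{i)}. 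A density argument (norms coming from $\ell_p^\infty$ are dense among those coming from $L_p$) extends $\wh c$ to all of $\mc N_k(\ell_p;\sma\la)$.

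The main obstacle I expect is twofold. First, one must be careful that the "steady" in \sarpp is genuinely needed: a plain approximate Ramsey property for $\Emb$ would only handle colorings of isometric embeddings, whereas here we color \emph{all} full-rank matrices with bounded distortion, so the Ramsey statement has to be uniform across the scale of embeddings $(\ell_p^k,\mtt n)\hookrightarrow\ell_p^n$ as $\mtt n$ ranges over $\mc N_k(\ell_p;\la)$ — this is exactly the content of steadiness and of the parametrized version of Corollary \ref{finitiz1}. Second, the compactness needed to reduce a compact-metric-valued $1$-Lipschitz coloring to a finite coloring, and then to pass back, requires that $(\mc N_k(\ell_p;\sma\la),\partial_{E_m,E_\infty})$ have a compact completion; this is where uniform equivalence with $\om$ and the Heine--Borel property of $(\mc N_V,\om)$ enter, i.e.\ condition \emph{i)} is genuinely used, not merely stated. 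The exclusion $p\notin 2\N+4$ is precisely the known boundary for \sarpp/Fraïssé-ness of the $\ell_p$-family (the even integers $\ge 4$ being the exceptional ones where $L_p$ fails the relevant amalgamation), so nothing beyond citing that classification is needed there; everything else is a matter of assembling the quoted results carefully.
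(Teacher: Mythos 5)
Your proposal is essentially the paper's own proof: Theorem \ref{factor_p_full_matrices} is obtained by citing the known \sarpp of the $p$-families and feeding it into Theorem \ref{lk3io2ior4o23i32}\,a), whose proof is exactly the chain you outline --- extreme amenability of $\iso(\wh E)$ (with $\wh E=L_p[0,1]$, resp.\ the Gurarij space) via Theorem \ref{43iooi4355}, the finitized orbit-quotient factorization of Corollary \ref{finitiz1}, and Theorem \ref{Canonical_orbit_metrics}, which identifies the quotient with $(\mc N_k(\wh E),\partial_{E_k,\wh E})$, proves compatibility and uniform equivalence with $\om$ on bounded sets, and (by an amalgamation argument inside $\age(\ell_p^\infty)$, the one step your sketch glosses over) shows $\partial_{E_k,\wh E}=\partial_{E_k,\ell_p^\infty}$ on the dense set $\mc N_k(\ell_p^\infty)$, so that condition i) of Definition \ref{io4jrio4rfer3w} holds for $\ell_p^\infty$ itself and not merely for $L_p$. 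Two cosmetic remarks: the paper needs no discretization of the compact coloring, since Proposition \ref{factor_orbitspace0} handles Lipschitz colorings directly, and the restriction $p\notin 2\N+4$ enters through the failure of approximate ultrahomogeneity (hence of the \sarpp) of $\age(L_p)$, not of amalgamation or of extreme amenability, which hold for all $p$.
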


 \subsubsection{Grassmannians}
Given a  normed space $E=(V,\nrm_E)$ the  $k$-Grassmannian $\gr(k,V)$ of $V$ is  naturally a  topological space,  as it can be identified with the corresponding topological quotient of $E^k$ by the relation $(x_j)_{j<k}\sim (y_j)_{j<k}$
  iff $\langle x_j\rangle_{j<k}=\langle y_j\rangle_{j<k}$.  If in addition $E$ is separable, this turns $\mr{Gr}(k,E):=\mr{Gr}(k,V)$ into a polish space.  A natural compatible metric is  the \emph{gap (or opening) metric} (see \cite{kalton_distances_2008}), $\La_{k,E}(U,W)$ defined as    the Hausdorff distance, with respect to the norm metric in $E$, between the unit balls  $\mr{Ball}(U,\mtt m)$ and  $\mr{Ball}(W,\mtt m)$, that is,
$$\La_{k,E}(U,W ):= \max \{ \max_{u\in \ball(U,\nrm_E)} \min_{w\in \ball(W,\nrm_E)} \nrm[u-w]_E,    \max_{w\in \ball(W,\nrm_E)}\min_{u\in \ball(U,\nrm_E)} \nrm[w-u]_E\}.$$
Let $\mr{GL}(V)\acts \mc N_V$ be the canonical action $ (\De\cdot \mtt m  )(v):= \mtt m( \De^{-1}( x))$ for every $x\in V$. Notice that the intrinsic metric $\om$ is invariant under this action.  Let  $\mc B_V:=\mc N_V \quo \mr{GL}(V)$ be the quotient space. Since $\om$ is invariant under the action and $\mc N_V$ has the Heine-Borel property, so is $\mc B_V$ with its quotient metric.  Observe that given a norm $\mtt m$ on $V$ of dimension $k$ there is a linear transformation $\De$ such that $(\De(u_j))_{j<k}$ is an Auerbach basis of $(V,\mtt m)$, i.e., a normalized sequence such that $\mtt m(\sum_{j<k} a_j \De(u_j))\ge \max_{j<k} |a_j|$ for every sequence of scalars $(a_j)_{j<k}$. This implies that given two norms $\mtt m,\mtt n\in \mc N_V$ there is a linear transformation $\De$ such that $\om(\De \cdot \mtt m, \mtt n)\le \log k$, and consequently the diameter of $\mc B_V$ is at most $\log k$, hence it is compact, called the {\em Banach-Mazur compactum}. The quotient metric corresponding to $\om$  is 2-Lipschitz equivalent to the well-known {\em Banach-Mazur metric}  
$$d_\mr{BM}(\mtt m,\mtt n):=\log \inf_{\De\in\mathrm{GL}(V)} \nrm[\De]_{(V,\mtt m),(V,\mtt n)}\cdot \nrm[\De^{-1}]_{(V,\mtt n),(V,\mtt m)}.$$ 
 Let $\mc B_V(E)$ denote the Banach-Mazur classes corresponding to norms in $\mc N_V(E)$, or, in other words, the isometric types of finite dimensional subspaces of $E$ of the same dimension than $V$. We write  $\mc B_k$, $\mc B_k(E)$ and $\gr(k,E)$  to denote $\mc B_{\mbb F^k}$, $\mc B_{\mbb F^k}(E)$ and   $\gr(k,V)$, respectively. 
\begin{definition}
Let  $\tau_{k,E}:\mr{Gr}(k,E)\to \mathcal B_k(E)$   be the mapping that assigns to each  $k$-dimensional  normed subspace $W$ of $E$  the isometric type of $(W,\nrm_E)$.
\end{definition}  
 In other words,   for    $W\in
\mr{Gr}(k,E)$, $\tau_{k,E}(W)=[\nu_{k,E}(T)]_\mr{BM}$ for some  1-1 linear  function $T: \mbb F^k\to W$ such that   $\im T= W$.  
We  define   the \emph{$E$-Kadets} mapping
 $\ga_{k,E}$ on $\mc B_{k}(E)\times \mc B_{k}(E)$ by 
 \[\ga_{k,E}([\mtt m],[\mtt n]):=\inf\conj{\La_{k,E}(U,W)}{\text{$U,W\in \mr{Gr}(k, E)$,  
  $(U,\nrm_E)\equiv (\mbb F^k,\mtt m)$, $(W,\nrm_E)\equiv (\mbb F^k,\mtt n)$}}.\]
  \begin{definition}
 $\ga_{k,E}$ is the {\em $E$-Kadets  metric} when it is a compatible metric on $\mc B_{k}(E)$. 
  \end{definition}  
In the literature the Kadets metric $\ga$  corresponds
 to the metric $\ga_{C[0,1]}$  for Grassmannians of the universal space of continuous functions on the unit interval $C[0,1]$ (see \cite{kalton_distances_2008}).
  
 \begin{proposition}
If 
 $\ga_{k,E}$ is  Kadets,   $\tau_{k,E}: (\mr{Gr}(k,E), \La_E)\to (\mc B_k(E), \ga_{k,E})$ is 1-Lipschitz.  \qed
\end{proposition}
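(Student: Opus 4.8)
The plan is to observe that the assertion is essentially a restatement of the way $\ga_{k,E}$ was defined, namely as an infimum of gap distances taken over all representatives of the given isometric types. So the proof amounts to exhibiting a concrete competitor in that infimum.

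Concretely, I would fix $U,W\in\gr(k,E)$ and recall that, by definition, $\tau_{k,E}(U)=[\mtt m]$ and $\tau_{k,E}(W)=[\mtt n]$, where $\mtt m,\mtt n$ are norms on $\mbb F^k$ with $(U,\nrm_E)\equiv(\mbb F^k,\mtt m)$ and $(W,\nrm_E)\equiv(\mbb F^k,\mtt n)$ (for instance $\mtt m=\nu_{k,E}(T)$ for any linear isomorphism $T:\mbb F^k\to U$, and likewise for $\mtt n$). Then the pair $(U,W)$ is itself one of the pairs of subspaces over which the infimum defining
\[
\ga_{k,E}([\mtt m],[\mtt n])=\inf\conj{\La_{k,E}(U',W')}{U',W'\in\gr(k,E),\ (U',\nrm_E)\equiv(\mbb F^k,\mtt m),\ (W',\nrm_E)\equiv(\mbb F^k,\mtt n)}
\]
is taken, so that $\ga_{k,E}(\tau_{k,E}(U),\tau_{k,E}(W))\le\La_{k,E}(U,W)$. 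Since $\La_E=\La_{k,E}$ is by definition the metric on $\gr(k,E)$ and, under the standing hypothesis that $\ga_{k,E}$ is Kadets, $\ga_{k,E}$ is a genuine metric on $\mc B_k(E)$, this inequality is precisely the statement that $\tau_{k,E}$ is $1$-Lipschitz.

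There is no real obstacle here: the Kadets hypothesis is used only to guarantee that the target is a metric space (so that ``$1$-Lipschitz'' is meaningful), and the inequality itself is built into the definition of the extrinsic/Kadets distance as an infimum over representatives. I would remark that this is the exact analogue of the earlier proposition asserting that $\nu_{X,E}$ is $1$-Lipschitz when $\partial_{X,E}$ is compatible, and that both follow from the same one-line ``the given pair is a competitor in the infimum'' argument.
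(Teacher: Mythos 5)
Your argument is correct and is exactly the intended one: the paper states this proposition with an immediate \qed, treating it as a direct consequence of the definition of $\ga_{k,E}$ as an infimum over representatives, which is precisely your ``the given pair $(U,W)$ is a competitor in the infimum'' observation. Nothing further is needed.
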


Given $E=(\mbb F^\al,\mtt m)$, we write $\mr{Gr}_{\mtt m}(k, \mbb F^\al)$  to denote the set  of $k$-dimensional subspaces $W$ of $\mbb F^\al$ so that $(W,\mtt m)$ is  isometric to $(\mbb F^k,\mtt m)$, i.e., $\tau_{k,E}(W)=[ \mtt m \rest\langle u_j\rangle_{j<k}]$. 
The next explains the interest of   $\mr{Gr}_{\mtt m}(k, \mbb F^\al)$ and it is proved similarly  to Proposition  \ref{fdsdijfidsewrw32}.
\begin{proposition}
Fix $k\le m$ and $W\in \gr(m, E)$.
\begin{enumerate}[a)]
\item An invertible linear  operator $\theta: W\to \mbb F^m$ defines an isometry $\Theta: (\gr(k, W), \La_{k,E})\to (\gr(k, \mbb F^m), \La_{k,E})$, $V\mapsto\theta(V)$,  if and only if $\theta:(W,\mtt m)\to (\mbb F^m,\mtt m)$ is an isometry.
\item  $\tau_{k,(W,\mtt m)}=\tau_{k,E}\rest  \mr{Gr}(k, W)$.  
\end{enumerate}  
 \end{proposition}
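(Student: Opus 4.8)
Part (b) is immediate and needs no hypothesis: for $V\in\gr(k,W)$ the norm that $V$ carries as a subspace of $(W,\mtt m)$ and the norm it carries as a subspace of $E$ are both the restriction of $\mtt m$ to $V$; hence for any injective $T\colon\mbb F^{k}\to V$ with $\im T=V$ one has $\nu_{k,(W,\mtt m)}(T)=\nu_{k,E}(T)$, and passing to Banach--Mazur classes gives $\tau_{k,(W,\mtt m)}(V)=\tau_{k,E}(V)$. Thus $\tau_{k,(W,\mtt m)}=\tau_{k,E}\!\rest\gr(k,W)$.

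For the forward implication of (a), suppose $\theta\colon(W,\mtt m)\to(\mbb F^{m},\mtt m)$ is a surjective linear isometry. For each $V\in\gr(k,W)$ the restriction $\theta\!\rest V$ is a linear isometry of $(V,\mtt m)$ onto $(\theta(V),\mtt m)$, so $\theta(\ball(V,\mtt m))=\ball(\theta(V),\mtt m)$; since $\theta$ also preserves the ambient norm $\nrm_{E}=\mtt m$, it preserves Hausdorff distances between bounded sets, and therefore $\La_{k,E}(\theta(V),\theta(V'))=\La_{k,E}(V,V')$ for all $V,V'\in\gr(k,W)$. As $\theta$ is invertible, $\Theta\colon V\mapsto\theta(V)$ is a bijection of $\gr(k,W)$ onto $\gr(k,\mbb F^{m})$, hence a surjective isometry. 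This is the exact analogue of the ``if'' direction of Proposition~\ref{fdsdijfidsewrw32}(a): composition with an isometric embedding preserves the relevant metric.

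For the reverse implication I would imitate the argument behind Proposition~\ref{fdsdijfidsewrw32}(a). Because $\Theta$ depends on $\theta$ only up to a nonzero scalar, what one proves is that $\theta$ is a \emph{similarity}, $\mtt m(\theta x)=\lambda\,\mtt m(x)$ for a fixed $\lambda>0$ and all $x\in W$; replacing $\theta$ by $\lambda^{-1}\theta$ then yields the isometry in the statement, which induces the same $\Theta$. Fix $V_{0}\in\gr(k,W)$, an Auerbach basis $(v_{j})_{j<k}$ of $(V_{0},\mtt m)$, and a linear complement $Y$ of $V_{0}$ in $W$, and parametrise the $k$-dimensional subspaces of $W$ near $V_{0}$ and transverse to $Y$ as graphs $V_{S}=\{x+Sx:x\in V_{0}\}$, $S\in\mc L(V_{0},Y)$ (each $V_{S}$ is automatically $k$-dimensional, so $S$ is unrestricted). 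Using the Auerbach basis to squeeze $\mtt m$ on $V_{0}$ between constant multiples of the coordinate $\ell_{\infty}$-norm, one shows that $\ball(V_{S},\mtt m)$ agrees with $(\mathrm{Id}+S)(\ball(V_{0},\mtt m))$ up to a higher-order Hausdorff error, and hence that $s\mapsto\La_{k,E}(V_{sS},V_{0})$ is right-differentiable at $0$ with right-derivative $\nrm[S]_{(V_{0},\mtt m),(Y,\mtt m)}$. Since $\Theta$ carries $V_{S}$ to the graph of $\theta S\theta^{-1}$ over $\theta(V_{0})$ and preserves $\La_{k,E}$, comparing derivatives gives $\nrm[S]_{(V_{0},\mtt m),(Y,\mtt m)}=\nrm[\theta S\theta^{-1}]_{(\theta(V_{0}),\mtt m),(\theta(Y),\mtt m)}$ for every $S$; specialising to rank-one $S$ and using the standard fact that two norms with proportional dual norms are proportional, one concludes that $\theta\!\rest V_{0}$ and $\theta\!\rest Y$ are similarities with a common ratio. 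Letting $V_{0}$ vary over $\gr(k,W)$ and matching these ratios on shared vectors shows $\theta$ is a similarity on all of $W$.

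The step I expect to be the main obstacle is the infinitesimal analysis of the gap metric near $V_{0}$: one must establish that $s\mapsto\La_{k,E}(V_{sS},V_{0})$ has first-order term exactly $\nrm[S]_{(V_{0},\mtt m),(Y,\mtt m)}$, which requires controlling how the deformation from $V_{0}$ to $V_{sS}$ moves the unit ball along the non-smooth directions of $\mtt m$; this is precisely where the Auerbach basis enters, as in Proposition~\ref{fdsdijfidsewrw32}. A secondary, purely bookkeeping point is the scalar indeterminacy noted above, on account of which the reverse implication is naturally phrased in terms of similarities.
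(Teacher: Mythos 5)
Parts (b) and the forward implication of (a) are fine and are the routine half of the statement: since $\nrm_E$ restricted to $W$ is $\mtt m$, the isometry types of $k$-dimensional subspaces of $W$ computed inside $(W,\mtt m)$ or inside $E$ coincide, and a surjective linear isometry $\theta$ carries $\ball(V,\mtt m)$ onto $\ball(\theta(V),\mtt m)$ and preserves the ambient distance, hence preserves $\La_{k,E}$. Your remark that $\theta$ and $\lambda\theta$ (for $\lambda>0$) induce the same $\Theta$, so that the converse can only pin down $\theta$ up to a positive scalar (i.e.\ as a similarity), is correct and is a caveat the statement glosses over; note that the matrix analogue, Proposition \ref{fdsdijfidsewrw32}, does not have this indeterminacy because the metric $d_{\mtt m}$ on matrices is not invariant under rescaling $R$. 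The paper's indication for the present proposition is precisely to argue ``similarly'' to Proposition \ref{fdsdijfidsewrw32}, i.e.\ via the Auerbach-basis/degeneration trick used there, not via any differential analysis of the gap metric, so your converse takes a genuinely different route.

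That route, as written, has a genuine gap. The claimed right-derivative of $s\mapsto\La_{k,E}(V_{sS},V_0)$ at $0$, namely $\nrm[S]_{(V_0,\mtt m),(Y,\mtt m)}$, cannot be correct: the left-hand side depends only on the curve of subspaces, while the right-hand side depends on the chosen complement $Y$. Already for $W=\R^2$ Euclidean and $V_0=\langle e_1\rangle$, the line spanned by $e_1+s(e_1+e_2)$ is the graph over $V_0$ of $sS$ with $S e_1=e_1+e_2$ (complement $\langle e_1+e_2\rangle$, $\nrm[S]=\sqrt2$) and, to first order, of $sS'$ with $S'e_1=e_2$ (complement $\langle e_2\rangle$, $\nrm[S']=1$), while $\La_{k,E}(V_{sS},V_0)\sim s$; so the derivative is $1$, not $\sqrt2$. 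The correct first-order quantity is of quotient type, roughly $\sup_{x\in\ball(V_0,\mtt m)}\mr{dist}_{\mtt m}(Sx,V_0)$, i.e.\ the perturbation is only seen modulo $V_0$, and even this requires care because renormalizing nearby points back into the unit ball contributes at the same order in a general norm. With the formula corrected, the remaining chain (equality of these quotient quantities for all $S$, specialization to rank one, ``proportional dual norms'', patching ratios over varying $V_0$) is not carried out and does not obviously yield that $\theta\rest V_0$ is a similarity. So the only-if direction is not established. The intended, more elementary argument is to adapt the composition trick of Proposition \ref{fdsdijfidsewrw32}: compare $\La_{k,E}$-distances of suitably chosen degenerating pairs of $k$-dimensional subspaces of $W$ and of their $\theta$-images so as to recover the ratios $\mtt m(\theta x)/\mtt m(x)$, which, by your scalar observation, is exactly the (similarity) conclusion one can hope for before normalizing $\theta$.
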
  
 
\begin{definition}[Ramsey factors for Grassmannians] A norm $\mtt m$ on $\mbb F^\infty$, $E=(\mbb F^\infty,\mtt m)$,  {\em produces Ramsey factors} for colorings of Grassmannians when
\begin{enumerate}[i)]
\item  $\ga_{k, E}$ is a compatible metric.
\item  For every  $k\in \N$, $\vep>0$, and every   compact metric  $(K,d_{K})$     there is $n$ such that for every $c:(\mr{Gr}(k, \mbb F^n),\La_{k,E})\to (K,d_K)$ there is
$V\in \mr{Gr}_{\mtt m}(m,\mbb F^n)$    such that  $\tau_{k, E}$ is an $\vep$-factor of $c$ in $\mr{Gr}(k,V)$.  

\end{enumerate}

\end{definition}  
\begin{theorem}[Factorization  of Grassmannians over $\R$, $\C$]\label{factor_p_grass}  For $1\le p\le \infty$, $p\notin 2\N+4$, the $p$-norm $\nrm_p\in \mc N_\infty$    {\em produces Ramsey factors} for colorings of Grassmannians.
  \end{theorem}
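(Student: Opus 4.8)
The plan is to reduce the Grassmannian factorization to the full-rank matrix factorization Theorem \ref{factor_p_full_matrices}, which is available for exactly the same range of $p$. The point is that $\mr{Gr}(k,\mbb F^n)$ is a metric quotient of the space $M_{n,k}^k$ of full-rank $n\times k$-matrices (send a matrix to the span of its columns), and this quotient map is compatible with the factor maps: the isometric type $\tau_{k,E}$ of the span of the columns of $A$ depends only on the norm $\nu_{k,E}(A)$ up to the $\mr{GL}(\mbb F^k)$-action, i.e.\ $\mc B_k(E)= \mc N_k(E)\quo \mr{GL}(\mbb F^k)$. So given a $K$-coloring $c$ of $(\mr{Gr}(k,\mbb F^n),\La_{k,E})$, I would first pull it back to a coloring $\wt c$ of $(M_{n,k}^k(\nrm_p;\la),d_{\nrm_p})$ by $\wt c(A):= c(\langle A\cdot u_j\rangle_{j<k})$; one checks this is $C$-Lipschitz (for a constant depending only on $k$ and $\la$, since on the set of $\la$-bounded matrices the passage from operator distance to gap distance of the images is Lipschitz), so after rescaling it is a compact coloring to which Theorem \ref{factor_p_full_matrices} applies, with the restriction to $\la$-isometric-embedding-like matrices handled by taking $\la\downarrow 1$.

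The key steps, in order, are: (1) record that $\ga_{k,E}$ is Kadets for $E=L_p[0,1]$ ($p<\infty$) and for the Gurarij space ($p=\infty$) — this is condition (i) and should already be contained in, or follow immediately from, the machinery of Section 3 (the uniqueness of the Fraïssé limit $\wh E$ and the fact that $\mc B_k(E)$ is dense in $\mc B_k(\wh E)$) together with the statement $\partial_{E_k,E_\infty}$ being compatible in Theorem \ref{factor_p_full_matrices}(i); indeed $\ga_{k,E}$ is the image of $\partial_{E_k,E_\infty}$ under the quotient by $\mr{GL}$. (2) Apply Theorem \ref{factor_p_full_matrices} to $\wt c$ with the same $k$, with $m$ as given, with a suitable $\la>1$, and with a compact target to obtain $n$, a matrix $R\in \mc E_{n,m}(\nrm_p)$, and an $\vep'$-factor $\widehat{c}:(\mc N_k(E_\infty;\sma\la),\partial_{E_m,E_\infty})\to K$ of $\wt c$ in $R\cdot M_{m,k}^k(\nrm_p;\sma\la)$. (3) Let $V:=\langle R\cdot u_j\rangle_{j<m}$; since $R$ is an isometric embedding of $(\mbb F^m,\nrm_p)$, we have $V\in \mr{Gr}_{\nrm_p}(m,\mbb F^n)$, and by Proposition (the last one stated, on $\tau_{k,(W,\mtt m)}=\tau_{k,E}\rest\mr{Gr}(k,W)$) every element of $\mr{Gr}(k,V)$ is the span of the columns of $R\cdot A$ for some full-rank $A\in M_{m,k}^k$ — and, using Auerbach bases, one may take $A$ with $\nrm[T_A],\nrm[T_A^{-1}]<\la$ by choosing $\la\ge k$ from the start. (4) Descend $\widehat{c}$ through the quotient $\mc N_k(E_\infty;\sma\la)\to \mc B_k(E_\infty)$ to get the coloring on $\mc B_k(E_\infty)$ witnessing that $\tau_{k,E}$ is an $\vep$-factor of $c$ in $\mr{Gr}(k,V)$; the oscillation picked up in this descent is controlled because $\widehat c$ was obtained as an $\vep'$-factor and $\partial_{E_m,E_\infty}$ dominates the pullback of $\ga_{k,E}$, so taking $\vep'$ small compared to $\vep$ and $\la$ close to $1$ (balanced against the Lipschitz constant of the pullback) closes the estimate.

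The main obstacle I expect is step (3)–(4): the descent of the factor coloring from norms to Banach–Mazur classes is not automatic, because a $\mr{GL}(\mbb F^k)$-orbit in $\mc N_k$ need not map into a single $\widehat c$-fiber — $\widehat c$ factors $\wt c$ only up to $\vep'$, and $\wt c$ itself is genuinely $\mr{GL}(\mbb F^k)$-periodic only in the finite-field case, not here. So I must argue that $\wt c(R\cdot A)$ is, up to a controlled error, a function of $\nu_{k,E}(A)$ *modulo* $\mr{GL}(\mbb F^k)$, and this is precisely where I would invoke the steady approximate Ramsey property / extreme amenability input from Section 3 rather than pure linear algebra: the point is that already in Theorem \ref{factor_p_full_matrices} the factor $\widehat c$ is, by its construction via the extremely amenable isometry group of $\wh E$, invariant (up to $\vep'$) under precomposition by isometries of $(\mbb F^k,\mtt m)$, and one needs the corresponding statement for the full $\mr{GL}(\mbb F^k)$-action on norms, which follows since any two norms in the same Banach–Mazur class are related by an isometry after relabelling. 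A careful bookkeeping of the three small parameters ($\vep'$, the Lipschitz constant from step (1), and $\log\la$) is the only remaining technical point, and it is routine.
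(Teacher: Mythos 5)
There is a genuine gap, and it sits exactly where you yourself flag it: the descent from $(\mc N_k(E_\infty;\sma\la),\partial_{E_k,E_\infty})$ to $(\mc B_k(E_\infty),\ga_{k,E_\infty})$. Theorem \ref{factor_p_full_matrices}, used as a black box, gives a $1$-Lipschitz $\widehat c$ with $d_K(\wt c(R\cdot A),\widehat c(\nu_{k,E}(A)))\le\vep'$ for $A\in M^k_{m,k}(\nrm_p;\sma\la)$, but it gives \emph{no} invariance of $\widehat c$ along $\mr{GL}(\mbb F^k)$-orbits. The only invariance you can extract from the exact $\mr{GL}$-periodicity of the pulled-back coloring is approximate (error $2\vep'$) and holds only at norms of the form $\nu_{k,E}(A)$ with $A$ ranging over $E_m$-matrices staying inside the $\la$-bounded region, not on all of $\mc N_k(E_\infty;\sma\la)$. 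Even granting such approximate orbit-invariance, the definition of $\vep$-factor requires an honestly $1$-Lipschitz coloring $\check c$ of $(\mc B_k(E_\infty),\ga_{k,E_\infty})$ with values in the same compact $K$; producing it from an approximately invariant, approximately Lipschitz $\widehat c$ needs (i) a quantitative comparison, on bounded sets, between $\ga_{k,E}([\mtt m],[\mtt n])$ and $\inf\{\partial_{E_k,E_\infty}(\mtt m_0,\mtt n_0):\mtt m_0\in[\mtt m],\,\mtt n_0\in[\mtt n]\}$ (your heuristic ``$\ga_{k,E}$ is the image of $\partial$ under the quotient by $\mr{GL}$'' is not proved in the paper and is at best a bi-Lipschitz statement with $\la$-dependent constants), and (ii) a smoothing step turning an approximately $1$-Lipschitz map into a $1$-Lipschitz one into an arbitrary compact target, which is not automatic. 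Likewise, condition (i) of producing Ramsey factors for Grassmannians --- that $\ga_{k,E}$ is a compatible metric at all (triangle inequality, right topology) --- does not follow from Theorem \ref{factor_p_full_matrices}(i); the paper proves it separately.

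Your closing remark that the missing invariance ``is precisely where I would invoke the steady approximate Ramsey property / extreme amenability input from Section 3'' concedes the point: once you reopen that machinery you are no longer reducing to the matrix theorem, and the paper's actual route is both shorter and cleaner. It never passes through matrices for the Grassmannian case: extreme amenability of $\iso(\wh E)$ (from the \sarpp{} of $\age(\ell_p^\infty)$ via Theorem \ref{43iooi4355}) is applied \emph{directly} to the isometric action $\iso(\wh E)\acts(\gr(k,\wh E),\La_{\wh E})$ through Proposition \ref{factor_orbitspace0}, giving an exactly $\iso(\wh E)$-invariant limit coloring which descends \emph{exactly} to a $1$-Lipschitz coloring of the orbit space $\gr(k,\wh E)\quo\iso(\wh E)$ (Lemma \ref{oi34ioioirffvfgg}(b)); this is finitized by the ultrafilter argument of Corollary \ref{finitiz1}(2); and Theorem \ref{Canonical_orbit_metrics}(b) identifies the orbit space with $(\mc B_k(\wh E),\ga_{k,\wh E})$, showing at the same time that $\ga_{k,\wh E}$ is compatible, agrees with $\ga_{k,E}$ on $\mc B_k(E)$, and that $\mc B_k(E)$ is dense --- all via the approximate ultrahomogeneity of $\wh E$ ($L_p[0,1]$ or the Gurarij space). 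That identification is exactly the ingredient your reduction would have to re-prove to close the descent step, so the proposal as written does not constitute a proof.
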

Geometrically, the previous result states that restrictions of compact colorings of Grassmannians depend on shapes of their unit balls.  
 The following statement can be considered as a version of the Graham-Leeb-Rothschild Theorem for the fields $\mbb R, \mbb C$.
\cor[Graham-Leeb-Rothschild for $\mbb R$, $\mbb C$]\label{oi43roijre4}
For every $1\le p\le \infty$, $p\notin 2\N+4$, every $k,m\in \N$, every $\vep>0$ and every compact metric space $(K,d_K)$   there is $n$ such that  every coloring  $c:(\mr{Gr}(k, \mbb F^n), \La_{k,\ell_p^\infty})\to (K,d_K)$   $\vep$-stabilizes on $ \mr{Gr}(k,W)$ for some $W\in \mr{Gr}(m,\mbb F^n)$.
\fcor
\prue
This is direct consequence of Theorem \ref{factor_p_grass} and the facts that for large $r$ the space $\ell_p^r$ contains almost isometric copies of $\ell_2 ^m$ and that $\mc B_k(\ell_2^m)$ consists of a point.
\fprue
\nota\label{lkejroiwerjweirwe}
Recall that Dvoretzky's Theorem asserts that any finite-dimensional normed space $X$ of dimension $r$ contains almost isometric copies of $\ell_2^m$  with $m$ proportional to $\log(r)$
  (see \cite[Theorem 12.3.6]{albiac_topics_2006}). This means that Corollary \ref{oi43roijre4} remains true for every norm $\mtt m$ on $\mbb F^\infty$.
  
\fnota
 
 \subsubsection{Square matrices}
Given a vector space $V$, let $V^*$ be its (algebraic) dual, the vector space  of linear functions $f:V\to \mbb F$; if in addition $X=(V,\mtt m)$ is a normed space, $X^*$ will denote   the (normed) dual space $\mc L(X,(\mbb F,|\cdot|))$, that is,  the vector space of continuous linear functionals $f:V\to \mbb F$ endowed with the {\em dual norm} $\mtt m^*(f):=\sup_{\mtt m(x)\le 1} |f(x)|$.    Let $\mr{GL}(V)\acts \mc N_{V^*}$ be the canonical action  $(\De\cdot \mtt n)(f):= \mtt n(\De^*(f))$ for $f\in V^*$. Observe that the dual mapping ${\cdot}^*: (\mc N_V,\om)\to( \mc N_{V^*},\om)$ is a $\mr{GL}(V)$-equivariant isometry, that is,   given $\mtt m\in \mc N_V$, $(\De\cdot \mtt m)^*=\De\cdot \mtt m^*$. Let 
 $\mr{GL}(V)\acts \mc N_V \times \mc N_{V^*}$ be the action  $\De \cdot (\mtt m,\mtt n):= (\De \cdot \mtt m, \De\cdot \mtt n)$ and let  $\mc D_V$  be the quotient space $(\mc N_V\times \mc N_{V^*})\quo \mr{GL}(V)$. With the compatible metric $\om_2((\mtt m, \mtt n), (\mtt p,\mtt q)):= \om(\mtt m, \mtt p)+\om(\mtt n,\mtt q)$ the product $\mc N_V\times \mc N_{V^*}$ has the Heine-Borel property, so with the corresponding quotient metric $\wt \om_2$,   $\mc D_V$ also has this property.    Given a normed space $E$, let   $\mc D_V(E):=(\mc N_V(E)\times \mc N_{V^*}(E))\quo \mr{GL}(V)$. Its orbits will be  denoted by $[ {\mtt{\mbf m}}]=[(\mtt m_0,\mtt m_1)]$. 
 In the next    $E:=(\mbb F^\al, \nrm_E)$ and $\mc D_k$, $\mc D_k(E)$ denote  $\mc D_{\mbb F^k}$ and $\mc D_{\mbb F^k}(E)$, respectively.   
 \begin{definition} 
Let $\nu^2_{k,E}: M_{\al}^k  \to \mc D_k(E)$ be the function that assigns to an  $\al$-squared  matrix $A$ of rank $k$, the class   $\mr{GL}(\mbb F^k)$-orbit  of the pair $(\nu_{p}(B), \nu_{p}(C))$ for $B,C\in M_{\al,k}^k$   with $A=B\cdot C^*$. 
\end{definition}  
The fact that $\nu_{k,E}^2$ is well defined follows from the full-rank factorization of matrices. 
 \begin{proposition}
$[(\nu_{\mbb F^k, E}(T_0), \nu_{(\mbb F^k)^*,E}(T_1))]=\{(\nu_{\mbb F^k, E}(U_0), \nu_{(\mbb F^k)^*,E}(U_1)) : U_0 \circ U_1^*= T_0\circ T_1^*\}$. 
\end{proposition}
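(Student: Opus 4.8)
The plan is to make the $\mr{GL}(\mbb F^{k})$-orbit on the left-hand side completely explicit and then match it against the right-hand side by an elementary adjoint computation. Write $V=\mbb F^{k}$; since $\dim V=k<\infty$ the space $V$ is reflexive, so throughout I will identify $V^{**}$ with $V$ via the canonical isomorphism, which makes $S^{**}=S$ and $(\De^{*})^{*}=\De$ for every $S,\De\in\mr{GL}(V)$, and I will write $j\colon E\to E^{**}$ for the canonical isometric embedding. The first step is to unwind the two canonical actions: from $(\nu_{V,E}(T))(x)=\nrm[T(x)]_{E}$ one gets $\De\cdot\nu_{V,E}(T_{0})=\nu_{V,E}(T_{0}\circ\De^{-1})$ and $\De\cdot\nu_{V^{*},E}(T_{1})=\nu_{V^{*},E}(T_{1}\circ\De^{*})$, so the orbit $[(\nu_{V,E}(T_{0}),\nu_{V^{*},E}(T_{1}))]$ is exactly $\{(\nu_{V,E}(T_{0}\De^{-1}),\nu_{V^{*},E}(T_{1}\De^{*}))\,:\,\De\in\mr{GL}(V)\}$. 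Hence it is enough to prove the following operator identity: a pair $(U_{0},U_{1})\in\mc L^{k}(V,E)\times\mc L^{k}(V^{*},E)$ satisfies $U_{0}\circ U_{1}^{*}=T_{0}\circ T_{1}^{*}$ if and only if there is $\De\in\mr{GL}(V)$ with $U_{0}=T_{0}\De^{-1}$ and $U_{1}=T_{1}\De^{*}$. (Note $U_{1}^{*}\colon E^{*}\to V^{**}=V$, so both $U_0\circ U_1^*$ and $T_0\circ T_1^*$ are honest operators $E^{*}\to E$.)

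The ``if'' direction I expect to be purely formal: given $\De$, compute $U_{1}^{*}=(T_{1}\De^{*})^{*}=(\De^{*})^{*}\circ T_{1}^{*}=\De\circ T_{1}^{*}$, and then $U_{0}\circ U_{1}^{*}=(T_{0}\De^{-1})\circ(\De T_{1}^{*})=T_{0}\circ T_{1}^{*}$. Transporting this through the two actions gives the inclusion of the orbit into the right-hand set.

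For the converse, assume $U_{0}\circ U_{1}^{*}=T_{0}\circ T_{1}^{*}$. The key preliminary observation is that $U_{1}^{*}\colon E^{*}\to V^{**}=V$ is onto: $U_{1}$ is injective, so $\im U_{1}^{*}=(\ker U_{1})^{\perp}=V^{**}$ (its range is a finite-dimensional, hence norm- and weak-$*$-closed, subspace whose weak-$*$ closure is all of $V^{**}$). Therefore $\im(U_{0}\circ U_{1}^{*})=U_{0}(V)=\im U_{0}$, and likewise $\im(T_{0}\circ T_{1}^{*})=\im T_{0}$, so the hypothesis forces $\im U_{0}=\im T_{0}=:F$, a $k$-dimensional subspace of $E$. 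Now $T_{0},U_{0}$ are both linear isomorphisms of $V$ onto $F$, so $\De:=U_{0}^{-1}T_{0}\in\mr{GL}(V)$ and $T_{0}\De^{-1}=U_{0}$. Substituting this into the hypothesis and cancelling the injective $T_{0}$ on the left yields $U_{1}^{*}=\De\circ T_{1}^{*}=(\De^{*})^{*}\circ T_{1}^{*}=(T_{1}\De^{*})^{*}$; since $S\mapsto S^{*}$ is injective on $\mc L(V^{*},E)$ (if $S^{*}=0$ then $\langle\phi,Sf\rangle=0$ for all $\phi\in E^{*}$, $f\in V^{*}$, whence $S=0$), we conclude $U_{1}=T_{1}\De^{*}$, which places $(\nu_{V,E}(U_0),\nu_{V^*,E}(U_1))$ in the orbit.

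The only point needing care is the bidual bookkeeping — tracking which identifications are canonical, where $j$ is genuinely needed, and in particular the surjectivity of $U_{1}^{*}$ that drives the image computation $\im(U_{0}\circ U_{1}^{*})=\im U_{0}$; once that is pinned down the rest is routine.
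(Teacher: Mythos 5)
Your proof is correct and follows essentially the same route as the paper: the paper's entire argument is the assertion that $U_0\circ U_1^*=T_0\circ T_1^*$ holds if and only if $U_0=T_0\circ\De^{-1}$ and $U_1=T_1\circ\De^*$ for some $\De\in\mr{GL}(\mbb F^k)$, which is exactly the operator identity you isolate after unwinding the $\mr{GL}(\mbb F^k)$-action on pairs of norms. Your write-up simply supplies the details the paper leaves implicit (surjectivity of $U_1^*$, the image computation giving $\im U_0=\im T_0$, and the definition $\De=U_0^{-1}\circ T_0$), all of which are correct.
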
  
\begin{proof}
 If $ U_0,U_1: \mbb F^k \to E$ are linear operators of rank $k$  then  $T_0 \circ T_1^*= U_0\circ U_1^*$ if and only if there is some $\De\in \mr{GL}(\mbb F^k)$ such that $U_0= T_0\circ \De^{-1}$ and $U_1= T_1\circ \De^*$. 
 \end{proof}  
  We define on   $\mc D_k(E)\times \mc D_k(E)$ the function $\mk d_{k,E}$
  $$\mk d_{k,E}([\mtt{\mbf m}], [\mtt{\mbf n}])):=\inf_{T,U}\nrm[T-U]_{E^*,E}$$      where the infimum is over bounded linear mappings $T,U: E^*\to E$ of rank $k$ admitting decompositions $T=T_0 \circ T_1^*$ and $U=U_0\circ U_1^*$  with  $T_0,U_0:\mbb F^k \to E$ and  $T_1,U_1:\mbb F^k \to E$ of rank $k$ for $j=0,1$ and  such that $(\nu_{\mbb F^k,E}(T_0),\nu_{(\mbb F^k)^*,E}(T_1))\in [\mtt{\mbf m}]$ and $(\nu_{\mbb F^k,E}(U_0),\nu_{(\mbb F^k)^*,E}(U_1))\in [\mtt{\mbf n}]$. In the previous, we are identifying canonically $(\mbb F^k)^*$ with $\mbb F^k$.   The following is easy to prove.
 \begin{proposition}
 If $\mk d_{k,E}$ is  compatible,  $\nu_{k,E}^2: (M_{\al}^k, d_{E^*,E})\to (\mc D_k(E), \mk d_{k,E})$
 is 1-Lipschitz.  \qed
\end{proposition}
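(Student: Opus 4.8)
The plan is to unwind the definitions and observe that the inequality
\[\mk d_{k,E}(\nu^2_{k,E}(A),\nu^2_{k,E}(A'))\le d_{E^*,E}(A,A')\]
for $A,A'\in M_\al^k$ holds simply because the operators $T_A,T_{A'}\colon E^*\to E$ are themselves legitimate competitors in the infimum defining $\mk d_{k,E}$.

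First I would fix $A,A'\in M_\al^k$ and choose full-rank factorizations $A=B\cdot C^*$ and $A'=B'\cdot (C')^*$ with $B,C,B',C'\in M_{\al,k}^k$. Then $T_A=T_B\circ T_C^*$ and $T_{A'}=T_{B'}\circ T_{C'}^*$ are rank-$k$ bounded operators from $E^*$ to $E$, since they factor through the $k$-dimensional space $\mbb F^k$. By the very definition of $\nu^2_{k,E}$, the pair $(\nu_{\mbb F^k,E}(T_B),\nu_{(\mbb F^k)^*,E}(T_C))$ represents the orbit $\nu^2_{k,E}(A)\in\mc D_k(E)$, and likewise $(\nu_{\mbb F^k,E}(T_{B'}),\nu_{(\mbb F^k)^*,E}(T_{C'}))$ represents $\nu^2_{k,E}(A')$; the preceding proposition guarantees both that $\nu^2_{k,E}$ is well defined and that this orbit does not depend on the chosen factorization.

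Next I would take $T:=T_A$ with the decomposition $T_B\circ T_C^*$ and $U:=T_{A'}$ with the decomposition $T_{B'}\circ T_{C'}^*$ as the competitors in the infimum defining $\mk d_{k,E}(\nu^2_{k,E}(A),\nu^2_{k,E}(A'))$. These are admissible: they are rank-$k$ operators $E^*\to E$ admitting decompositions whose associated pairs of norms lie in the required $\mr{GL}(\mbb F^k)$-orbits. Hence the infimum is bounded above by $\nrm[T_A-T_{A'}]_{E^*,E}$, which is by definition $d_{E^*,E}(A,A')$; combined with the standing hypothesis that $\mk d_{k,E}$ is a compatible metric, this gives that $\nu^2_{k,E}$ is $1$-Lipschitz. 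I do not expect any genuine obstacle here: the only points worth a remark are that $d_{E^*,E}(A,A')$ is by definition the operator norm $\nrm[T_A-T_{A'}]_{E^*,E}$ under the identifications in force, and that the existence of a full-rank factorization of a rank-$k$ matrix (already used for the well-definedness of $\nu^2_{k,E}$) is exactly what makes each $T_A$ an admissible competitor in the infimum.
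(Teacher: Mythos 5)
Your argument is correct and is exactly the intended one (the paper states this proposition with a \qed and no proof, as the inequality is immediate from the definitions): the full-rank factorization $A=B\cdot C^*$ exhibits $T_A=T_B\circ T_C^*$ as an admissible competitor in the infimum defining $\mk d_{k,E}$, and likewise for $A'$, so $\mk d_{k,E}(\nu^2_{k,E}(A),\nu^2_{k,E}(A'))\le \nrm[T_A-T_{A'}]_{E^*,E}=d_{E^*,E}(A,A')$. The compatibility hypothesis is only needed so that $\mk d_{k,E}$ is a genuine metric on $\mc D_k(E)$, as you note.
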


 Given a norm  $\mtt m\in \mc N_{\mbb F^{\al}}$,  let $M_{\al}^{k}(\mtt m;\la)$ be the collection of    $A\in M_{\al}^k$  such that $\nrm[T_A],\nrm[T_A^{-1}]\le \la$.   The next has a    similar proof to that of Proposition   \ref{fdsdijfidsewrw32}, so we leave the details to the reader. 
 \prop
Let $L\in M_{\al,m}^k$ and $R\in M_{m,\al}^k$.
\begin{enumerate}[a)]
\item The multiplication by $L$ and $R$ function $\mu_{L,R}:(M_{m}^k, d_{E^*,E})\to (M_{\al}^k, d_{E^*,E})$, $A\in M_{m}^k\mapsto \mu_{L,R}(A):=L\cdot A \cdot R \in M_{\al}^k$ is  an isometry if and only if $L, R^*\in \mc E_{\al,m}(\nrm_E)$.
 \item If  $L, R^*\in \mc E_{n,m}(\nrm_E)$, then $\nu_{k,(\mbb F^\al, \nrm_E)}^2\circ \mu_{L,R}= \nu_{k,(\mbb F^m, \nrm_E)}^2$. \qed
 \end{enumerate} 
\fprop

Given $\la\ge 1$, let $\mc D_k(\la):=\conj{[(\mtt m,\mtt n)] \in \mc D_k}{ \om(\mtt m^*,\mtt n)\le \la }$,   $\mc D_k(\sma \la):=\conj{[(\mtt m,\mtt n)] \in \mc D_k}{ \om(\mtt m^*,\mtt n)< \la }$,  and let $\mc D_k(E;\la):=\mc D_k(\la)\cap \mc D_k(E)$, and $\mc D_k(E;\sma\la):=\mc D_k(\sma\la)\cap \mc D_k(E)$.  
\prop
\begin{enumerate}[a)] 
\item $\mc D_k(\la)$ and $\mc D_k(\sma\la)$ are well defined.
\item $\mc D_k(\la)$ is compact. 
\item  $\mc D_k(E;\la)=\nu_{k,E}^2( M_\al^k(\nrm_E;\la))$ and $\mc D_k(E;\sma\la)=\nu_{k,E}^2( M_\al^k(\nrm_E;\sma\la))$.
  
\end{enumerate}  
\fprop 
\begin{proof} {\it a)} follows from the fact that given $\De\in \gl(\mbb F^k)$, 
 $\om((\De\cdot \mtt m)^*, \De\cdot \mtt n)=\om(\De\cdot \mtt m^*, \De\cdot \mtt n)=  \om( \mtt m^*,  \mtt n)$. {\it b)}: It is clear that $\mc D_k(\la)$ is closed, so by the Heine-Borel property of $(\mc D_k,\wt\om_2)$, we just have to prove that $\mc D_k(\la)$ is $\wt\om_2$-bounded:       Fix $[(\mtt m,\mtt n)],[(\mtt p,\mtt q)]\in \mc D_k(\la)$, let $\De\in \gl(\mbb F^k)$ be such that $\om(\De\cdot \mtt m,\mtt p)\le \la$. Then it follows that 
$\om(\De\cdot \mtt n, \mtt q)\le \om(\De \cdot \mtt n, \De\cdot  \mtt m^*) +\om( \De\cdot  \mtt m^*,  \mtt p^*)+\om( \mtt p^*,  \mtt q) \le 2\la +k$.  {\it c)} will be proved in   Lemma \ref{jiowioerio43r446576}.
\end{proof}

\begin{definition} A norm $\mtt m$ on $\mbb F^\infty$  {\em produces Ramsey factors} for colorings of square matrices if 
    
 \begin{enumerate}[i)]
 \item  $\mk d_{k,E_\infty}$ is a compatible metric  on $\mc D_k(E)$ uniformly equivalent to $\om_2$ on $\om_2$-bounded sets.
\item Given $k,m\in \mathbb{N}$, real numbers $\varepsilon>0$,    $  \la \ge 1$,   and a   compact metric space $(K,d_{K})$,   there is $n\in \N$ such that for every coloring  $c:(M_{n}^{k}(\mtt m;\la),d_{E_\infty^*,E_\infty})\rightarrow (K,d_{K})$ there are $R_0,R_1\in \mathcal{E}_{n, m}(\mtt m)$ such that   the restriction $\nu_{k,E_\infty}^{2}: M_{\infty}^k(\mtt m;\sma\la)\to \mc D_k(E_\infty;\sma\la)$ is an $\vep$-factor of $c$  in  $ R_0\cdot M_{m}^k(\mtt m,\sma\la) \cdot R_1^*$.

 \end{enumerate}   
\end{definition}  
 \begin{theorem}[Factorization of colorings of square matrices over $\R$, $\C$]\label{factor_p_square_matrices}
For $1\le p\le \infty$, $p\notin 2\N+4$, the $p$- norm $\nrm_p\in \mc N_\infty$    {\em produces Ramsey factors} for colorings of square matrices.
 
 \end{theorem}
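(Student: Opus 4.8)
The plan is to reduce Theorem \ref{factor_p_square_matrices} to Theorem \ref{factor_p_full_matrices} in exactly the same way Theorem (Factorization of colorings of square matrices over a finite field) was reduced to Theorem \ref{finitefield}, while keeping careful track of the metrics, the operator-norm bounds encoded by $\la$, and the compactness hypotheses supplied by part (i). First I would record the structural fact (the analogue of the full-rank decomposition used over finite fields): every rank-$k$ square matrix $A\in M_n^k$ factors as $A = B\cdot C^*$ with $B,C\in M_{n,k}^k$, and moreover one can choose $B,C$ with controlled norms; concretely, if $A\in M_n^k(\mtt m;\la)$ then $T_A = T_B\circ T_C^*$ so the factors can be taken to lie in $M_{n,k}^k(\mtt m;\la')$ for a $\la'$ depending only on $\la$ and $k$ (via an Auerbach/John-type normalization of the middle $\gl(\mbb F^k)$ piece, exactly as in the well-definedness of $\nu^2_{k,E}$ and the bound $\om(\De\cdot\mtt m,\mtt q)\le 2\la+k$ in the preceding Proposition). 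This is where the bookkeeping of $\la$ versus $\sma\la$ matters: I will need a two-step application of Theorem \ref{factor_p_full_matrices}, so I should pick intermediate radii $1<\la_1<\la_2$ with $M_n^k(\mtt m;\la)$ mapping into products $R_0\cdot M_m^k(\mtt m;\sma\la)\cdot R_1^*$ only after absorbing the slack.

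The main construction runs as follows. Given $k,m,\vep,\la,(K,d_K)$, use part (i) — $\mk d_{k,E_\infty}$ is compatible and uniformly equivalent to $\om_2$ on $\om_2$-bounded sets, and by the last Proposition $\mc D_k(\la)$ is compact — to fix, via the (uniform) equivalence, a $\delta>0$ so that $\om_2$-balls of radius $\delta$ have $\mk d_{k,E_\infty}$-diameter $\le \vep/2$ inside the relevant bounded region. Now apply Theorem \ref{factor_p_full_matrices} twice, in the nested fashion of the finite-field proof: first to the parameters $(k, n_0, \vep', \la', (K,d_K))$ to get a dimension $n$ and a matrix $R\in\mc E_{n,n_0}(\nrm_p)$ such that an arbitrary $K$-coloring $f$ of $M_n^k(\nrm_p;\la)$, repackaged as $c(A) := \big(f(A\cdot B^*\cdot P^*)\big)_{B}$ — here $P\in\mc E_{n,n_0}$ is fixed and $B$ ranges over a suitable totally bounded (hence "almost finite" up to $\vep'$) net in $M_{n_0,k}^k(\nrm_p;\le\la')$, which is legitimate because that space is $d_{\nrm_p}$-totally bounded by the Heine–Borel/compactness statements — factors through $\nu_{\mbb F^k,E_\infty}$ up to $\vep'$; then apply Theorem \ref{factor_p_full_matrices} a second time to the induced coloring $d$ on $M_{n_0,k}^k(\nrm_p)$ to extract $S\in\mc E_{n_1,m}(\nrm_p)$. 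Setting $R_0 := R\cdot Q$ and $R_1 := P\cdot S$ with $Q\in\mc E_{n_0,m}$ arbitrary, the computation that the resulting color of $R_0\cdot A\cdot R_1^*$ for $A\in M_m^k(\nrm_p;\sma\la)$ depends only on $\nu^2_{k,E_\infty}(A)$ is the verbatim metric analogue of the chain of equalities in the finite-field square-matrix proof, with each "$=$" replaced by "$\le\vep$-many" and the fact $\nu^2_{k,(\mbb F^m,\nrm_p)} = \nu^2_{k,E_\infty}\circ\mu_{L,R}$ from the Proposition on $\mu_{L,R}$ playing the role of "$\tau^{(2)}(R_0 A R_1^{\mathrm t})$ depends only on $A$". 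Finally I define $\widetilde c$ on $\mc D_k(E_\infty;\sma\la)$ by choosing, for each orbit, a representative factorization landing in the nets used above and reading off the corresponding value; part (i) guarantees this is well-defined up to $\vep$ and $1$-Lipschitz, so $\nu^2_{k,E_\infty}$ is an $\vep$-factor of $c$ in $R_0\cdot M_m^k(\nrm_p;\sma\la)\cdot R_1^*$ as required by (ii); part (i) itself for $p\notin 2\N+4$ is the content established separately (Theorem \ref{Canonical_orbit_metrics}) and I simply invoke it.

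**The hard part** will be the $\vep$-bookkeeping across the two nested applications combined with the passage from exact to approximate factorization: unlike the finite-field case where $\mc S_k$, $\gl(\mbb F^k)$ and all the target sets are literally finite, here I must replace $M_{n_0,k}^k(\nrm_p;\le\la')$ and $\gl(\mbb F^k)$-orbits by finite $\vep'$-nets, check that the "repackaged" colorings $c,d$ are genuinely $1$-Lipschitz into a compact (net-indexed product) metric space so that Theorem \ref{factor_p_full_matrices} applies, and propagate the error — a constant times $\vep'$ — through the multiplicative chain $R_0\cdot A\cdot R_1^*$ without it being amplified, which is where the choice of $R_0,R_1$ as exact isometric embeddings (so that $\mu_{L,R}$ is a genuine isometry by the Proposition) is essential. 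A secondary technical point is choosing the intermediate radii $\la'>\la_1>\la$ so that the second factor-matrix $S$, which is only an isometric embedding of $(\mbb F^m,\nrm_p)$, composes with the first to keep $R_0\cdot M_m^k(\nrm_p;\sma\la)\cdot R_1^*$ inside $M_n^k(\nrm_p;\la)$ where the original coloring $c$ was defined — here one uses that composing with isometric embeddings does not increase $\nrm[T_A]$ or $\nrm[T_A^{-1}]$, so in fact $\la'=\la$ suffices and no inflation is needed, exactly as in the finite-field argument where RCEF matrices compose cleanly.
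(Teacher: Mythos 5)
Your route is genuinely different from the paper's. You reduce Theorem \ref{factor_p_square_matrices} directly to Theorem \ref{factor_p_full_matrices} by a nested double application with $\vep$-nets, mimicking the finite-field argument; the paper instead deliberately avoids this (it remarks that the direct approach forces ``several approximation arguments'' and becomes complicated) and proves all three factorization theorems at once through Theorem \ref{lk3io2ior4o23i32}: the \sarpp of $\age(\ell_p^\infty)$ gives extreme amenability of $\iso(\wh E)$ (with $\wh E=L_p[0,1]$ or $\mbb G$), Lemma \ref{oi34ioioirffvfgg} and Corollary \ref{finitiz1} give an exact factorization through the orbit space $\mc L^{k,\mr{w}^*}_{\sma\la}(\wh E^*,\wh E)\quo\iso(\wh E)^2$, and Theorem \ref{Canonical_orbit_metrics}\,c) together with Lemma \ref{jiowioerio43r446576} identifies that orbit space with $(\mc D_k(\wh E),\mk d_{k,\wh E})$ and restricts it to $\mc D_k(E;\sma\la)$. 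What the paper's route buys is precisely the step your plan leaves hand-waved: the factor coloring comes from a fixed point $c_\infty$ of an $\iso(\wh E)^2$-action on $\mathrm{Lip}$, so it is automatically orbit-invariant and genuinely $1$-Lipschitz for the quotient metric, and the extrinsic metric $\mk d_{k,E_\infty}$ is then identified with that quotient metric.

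The genuine gap in your plan is the last step, where you assert that part (i) ``guarantees'' your net-based $\widetilde c$ is well defined up to $\vep$ and $1$-Lipschitz. Part (i) is a statement about the space $(\mc D_k(E_\infty),\mk d_{k,E_\infty})$, not about your coloring. What you actually must prove is that the value $c(R_0\cdot A\cdot R_1^*)$ depends, up to $\vep$ and in a $1$-Lipschitz way for $\mk d_{k,E_\infty}$, only on the orbit $\nu^2_{k,E_\infty}(A)$, where the infimum defining $\mk d_{k,E_\infty}$ runs over representatives $T=T_0\circ T_1^*$ sitting anywhere in $\ell_p^\infty$, far from $R_0\cdot M_m^k\cdot R_1^*$. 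To transfer the $\partial$-Lipschitzness of your two one-sided factor colorings you need a quantitative lemma saying that $\mk d$- (equivalently $\wt\om_2$-) close orbits admit decompositions whose components are componentwise close in $\om$ (this is the content of Claim \ref{useful_omega_2}, which your plan never invokes), with constants depending on $k,\la$; and you then need to upgrade the resulting approximately invariant, approximately Lipschitz function, defined through discontinuous nearest-net-point selections, into an honest $1$-Lipschitz map into $(K,d_K)$ --- not automatic for colorings valued in a general compact metric space (McShane-type extension is only coordinatewise), whereas the paper gets this for free from $c_\infty$. Two smaller points: the error amplification you worry about is controlled not by $R_0,R_1$ being isometric but by the operator norms (up to $\la'$, of order $k\la$) of the non-net factor multiplying the net-approximation error, so $\vep'$ must be chosen after $k,\la$; and your closing claim that $\la'=\la$ suffices for the full-rank decomposition is false --- an Auerbach normalization of the middle $\gl(\mbb F^k)$ piece only makes the factors $k$-equivalent to $\ell_p^k$, so the inflation $\la'\sim k\la$ is genuinely needed there (it is only for keeping $R_0\cdot M_m^k(\nrm_p;\sma\la)\cdot R_1^*$ inside $M_n^k(\nrm_p;\la)$ that no inflation occurs).
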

 
 \subsubsection{Uniqueness}
We see now how when the metric on matrices/Grassmannians is fixed there are not so many options of being a Ramsey factor.   Suppose that $\mtt m\in \mc N_\infty$, $k\in \N$ and  $\la>1$.  A $(k,\mtt m, \la)$-Ramsey factor is a  pair $(\mu,\mc A)$ where $\mu: (M_{\infty,k}^k(\mtt m;\sma\la),d_\mtt m)\to (K_\mu,d_\mu)$ is a coloring (i.e., 1-Lipschitz  mapping) to a compact metric space $(K_\mu,d_\mu)$, $\mc A\con\mc E(\mtt m)$, and 
\begin{enumerate}[i)]
\item  The image of $\mu$ is dense in $K_\mu$.
\item $\mu(R A)=\mu(A)$ for every $R\in \mc A\cap M_{n,m}^m$ and $A\in M_{m,k}^k(\mtt m;\la)$. 
\item For every $m$, $\vep>0$ and every compact metric $(L,d_L)$ there is $n\in \N$ such that if $c: (M_{n,k}^k(\mtt m;\la),d_\mtt m)\to (L,d_L)$ is a coloring then there is some $R\in \mc A$  such that the restriction $\mu:R\cdot M_{m,k}^k(\mtt m; \sma \la)\to (K_\mu,d_\mu)$ is an $\vep$-factor of $c$ in $R\cdot M_{m,k}^k(\mtt m; \sma \la)$. 
  
\end{enumerate}  
Suppose that $\mtt m$ produces Ramsey factors for full rank matrices and set $E_\al:=(\mbb F^\al,\mtt m)$ for $\al\le \infty$. We have that  $\nu_{\mbb F^k, E_\infty}(M_{\infty,k}^k(\mtt m; \sma \la))= \mc N_k(E;\sma\la)$   is  $\partial_{E_k,E_\infty}$-totally bounded: This is because $\partial_{E_k,E_\infty}$ and $\om$ are, by hypothesis, uniformly equivalent to $\om$ on  $\mc N_k(E;\sma\la)$, and this set  is $\om$-totally bounded because is a $\om$-bounded set of $\mc N_k$. This implies that the completion $\widehat{\mc N_k(E;\sma\la)}$ is a compact space. Then it is obvious from the definition of producing Ramsey factors that $\nu_{\mbb F^k,E_\infty}: M_{\infty,k}^k(\mtt m; \sma \la)\to \widehat{\mc N_k(E;\sma\la)}$ is a $(k,\mtt m,\la)$-Ramsey factor, and in fact is the minimal one: 
 
\begin{proposition}
Suppose that $\mtt m$ produces Ramsey factors for full rank matrices, and suppose that  $(\mu,\mc A)$ is a   $(k,\mtt m, \la)$-Ramsey factor. 
\begin{enumerate}[a)]
\item  There is some surjective coloring $\theta: K_\mu\to     \wh{\mc N_k(E_\infty; \sma\la)}  $ such that $\nu_{\mbb F^k,E_\infty}= \theta \circ \mu$.
\item If $\mc A= \mc E (\mtt m)$, then there is a surjective isometry $\theta: K_\mu\to     \mc N_k(E_p; \la)  $ such that $\nu_{\mbb F^k, E_\infty}= \theta \circ \mu$.

\end{enumerate}  
\end{proposition}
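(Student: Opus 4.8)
The plan is to adapt the minimality argument of the finite case (Proposition~\ref{8ret4433344}) to the metric setting, replacing exact factorizations by $\vep$-approximate ones and taking a limit at the end. The main input is the observation recorded just above: $\nu:=\nu_{\mbb F^k,E_\infty}\rest M_{\infty,k}^k(\mtt m;\sma\la)$ is itself a $(k,\mtt m,\la)$-Ramsey factor with associated family $\mc E(\mtt m)$ and compact target $\widehat{\mc N_k(E_\infty;\sma\la)}$; moreover, by the $1$-Lipschitz property of $\nu_{\mbb F^k,E_\infty}$ noted above (using hypothesis (i) of producing Ramsey factors, which makes $\partial_{E_k,E_\infty}$ a compatible metric) the restriction of $\nu$ to any $M_{N,k}^k(\mtt m;\la)$ is a $1$-Lipschitz coloring valued in the compact space $\widehat{\mc N_k(E_\infty;\la)}$, and $\nu(R\cdot A)=\nu(A)$ for every isometric embedding $R$ by Proposition~\ref{fdsdijfidsewrw32}(b). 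Everything reduces to the Lipschitz estimate
\begin{equation*}
\partial_{E_k,E_\infty}(\nu(A),\nu(A'))\le d_\mu(\mu(A),\mu(A'))\qquad\bigl(A,A'\in M_{\infty,k}^k(\mtt m;\sma\la)\bigr)
\tag{$\star$}
\end{equation*}
together with its reverse in the case $\mc A=\mc E(\mtt m)$.

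To establish $(\star)$ I would fix $A,A'$, choose $n$ with $A,A'\in M_{n,k}^k(\mtt m;\sma\la)$, and for each $j\in\N$ apply property (iii) of $(\mu,\mc A)$ with $m:=n$, $\vep:=1/j$, and compact target $L:=\widehat{\mc N_k(E_\infty;\la)}$. This produces an integer $N_j$; feeding back the $1$-Lipschitz coloring $c_j:=\nu\rest M_{N_j,k}^k(\mtt m;\la)$, property (iii) yields $R_j\in\mc A\cap M_{N_j,n}^n$ and a $1$-Lipschitz $\widetilde c_j:K_\mu\to L$ with $d_L(\nu(R_j\cdot B),\widetilde c_j(\mu(R_j\cdot B)))\le 1/j$ for all $B\in M_{n,k}^k(\mtt m;\sma\la)$. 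Since $R_j\in\mc A\con\mc E(\mtt m)$ is an isometric embedding, $\nu(R_j\cdot B)=\nu(B)$ (Proposition~\ref{fdsdijfidsewrw32}(b)) and $\mu(R_j\cdot B)=\mu(B)$ (property (ii) of $(\mu,\mc A)$), so $\partial_{E_k,E_\infty}(\nu(B),\widetilde c_j(\mu(B)))\le 1/j$ for every such $B$, in particular for $A$ and $A'$. The triangle inequality in $L$ together with $\widetilde c_j$ being $1$-Lipschitz then gives $\partial_{E_k,E_\infty}(\nu(A),\nu(A'))\le 2/j+d_\mu(\mu(A),\mu(A'))$, and $(\star)$ follows by letting $j\to\infty$. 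Granting $(\star)$: the rule $\mu(A)\mapsto\nu(A)$ is a well-defined $1$-Lipschitz map on the dense subset $\mu(M_{\infty,k}^k(\mtt m;\sma\la))$ of the compact space $K_\mu$ with values in the complete space $\widehat{\mc N_k(E_\infty;\sma\la)}$, so it extends uniquely to a $1$-Lipschitz $\theta:K_\mu\to\widehat{\mc N_k(E_\infty;\sma\la)}$ with $\theta\circ\mu=\nu$; and $\theta$ is onto because $\theta(K_\mu)$ is compact, hence closed, and contains the dense set $\nu(M_{\infty,k}^k(\mtt m;\sma\la))=\mc N_k(E_\infty;\sma\la)$. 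This settles (a).

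For (b) I would exploit symmetry: when $\mc A=\mc E(\mtt m)$ both $(\mu,\mc E(\mtt m))$ and $(\nu,\mc E(\mtt m))$ are $(k,\mtt m,\la)$-Ramsey factors, and by property (ii) each of $\mu,\nu$ is invariant under left multiplication by members of $\mc E(\mtt m)$. Rerunning the argument of the previous paragraph with the roles of $(\mu,\mc A)$ and $(\nu,\mc E(\mtt m))$ interchanged---that is, using property (iii) of $\nu$ to $1/j$-factor the compact-valued coloring $\mu$---yields the reverse inequality $d_\mu(\mu(A),\mu(A'))\le\partial_{E_k,E_\infty}(\nu(A),\nu(A'))$; combined with $(\star)$ this makes $\mu(A)\mapsto\nu(A)$ distance preserving, so its extension $\theta$ is a surjective isometry of $K_\mu$ onto $\widehat{\mc N_k(E_\infty;\sma\la)}$.

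The one place I expect friction is the mismatch between the open ball $M_{n,k}^k(\mtt m;\sma\la)$, the domain of $\mu$, and the closed ball $M_{n,k}^k(\mtt m;\la)$, on which property (iii) requires colorings; so in the reverse step of (b) one cannot feed $\mu$ directly as a coloring of $M_{n,k}^k(\mtt m;\la)$. I would circumvent this by fixing, for the pair $A,A'$ under consideration, a radius $\la'\in(1,\la)$ with $A,A'\in M_{n,k}^k(\mtt m;\la')$ and applying property (iii) of $\nu$ at parameter $\la'$ (legitimate because $\mtt m$ produces Ramsey factors for every $\la>1$): the coloring fed in is then $\mu$ restricted to $M_{N,k}^k(\mtt m;\la')\con M_{N,k}^k(\mtt m;\sma\la)$, which does lie in the domain of $\mu$; one also checks that $R_j\cdot B$ stays inside $M_{N,k}^k(\mtt m;\sma\la)$ using submultiplicativity of the operator norm and the fact that $R_j$ is an isometric embedding. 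Apart from this the argument is the routine metric analogue of the finite-field case.
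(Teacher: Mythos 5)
Your proof of (a) is correct and is essentially the paper's own argument: both apply property (iii) of $(\mu,\mc A)$ to the $1$-Lipschitz coloring $\nu_{\mbb F^k,E_\infty}$ with compact target $\wh{\mc N_k(E_\infty;\la)}$ and then use the coherence identities $\mu(R\cdot B)=\mu(B)$, $\nu_{\mbb F^k,E_\infty}(R\cdot B)=\nu_{\mbb F^k,E_\infty}(B)$; the paper takes a pointwise limit of the approximate factor maps $\theta_m$ on the image of $\mu$, while you package the same information as the Lipschitz estimate $(\star)$ and extend by density and compactness of $K_\mu$ — an organizational, not mathematical, difference (and your route makes the well-definedness and $1$-Lipschitzness of $\theta$ more explicit than the paper does). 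For (b) the paper only says it is an easy consequence of (a); your symmetric argument, feeding $\mu$ into the producing-Ramsey-factors property of $\mtt m$ to get the reverse inequality and concluding that the extension is a surjective isometry onto the completion, is a correct way to supply that detail, and your workaround for the open/closed ball mismatch is the right idea. One small adjustment: the $\vep$-factor estimate at parameter $\la'$ is only guaranteed for $B\in M_{n,k}^k(\mtt m;\sma\la')$, i.e.\ in the open $\la'$-ball, so you should pick $\la'$ strictly larger than $\max\{\nrm[T_A],\nrm[T_A^{-1}],\nrm[T_{A'}],\nrm[T_{A'}^{-1}]\}$ (and still $<\la$); with your requirement $A,A'\in M_{n,k}^k(\mtt m;\la')$ the pair could sit on the boundary sphere, where the estimate is not stated. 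This is trivially fixable and does not affect the argument.
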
  
\begin{proof}
{\it a)}: For each $m$ we can find 1-Lipschitz mappings $\theta_m:(K,d_K)\to \wh{\mc N_k(E_\infty; \la)}$ and $R_m\in \mc E_{n_m, m}(\mtt m)$  such that 
$\partial_{\mbb F^k,\ell_p^\infty}(\theta_m( \mu(   R_m \cdot A)), \nu_{\mbb F^k,E_\infty}(R_m\cdot A))\le 1/2^m   $ for every $A\in  M_{m,k}^k(\mtt m;\sma\la)$. By the  coherence properties of $\mu$ and $\nu_{\mbb F^k,E_\infty}$, we obtain that
\begin{equation}\label{o834r834}
\text{$\partial_{\mbb Fk,\ell_p^\infty}(\theta_m( \mu( A)), \nu_{k,\ell_p^\infty}(A))\le 1/2^m   $ for every $A\in  M_{m,k}^k(\mtt m;\sma\la)$.}
\end{equation}  
Given $A\in M_{\infty,k}^k(\mtt m;\sma\la)$, set $x:=\mu(A)$, and let $n$ be such that $A\in M_{n,k}$. Then we know from \eqref{o834r834} that $(\theta_m( \mu( A)))_{m\ge n}$ is a Cauchy sequence, and let $\theta(x)\in \wh{\mc N_k(E_\infty;\sma \la)}$ be its limit.  Notice that $\theta(\mu(A))=\nu_{\mbb F^k,E_\infty}(A)$, so, since  $\nu_{\mbb F^k,E_\infty}(M_{\infty,k}^k(\mtt m;\sma\la))$ is dense in $\wh{\mc N_k(E_\infty; \sma\la)}$, we can conclude that $\theta$ is onto. {\it b)} is an easy consequence of {\it a)}.
\end{proof}  
With the obvious definitions of Ramsey factors for Grassmannians and for square matrices, the  corresponding statements on $\tau_{k,E_\infty}$ and $\nu_{k,E_\infty}^2$ are also true. 
\nota
For some norms $\mtt m$, for example the $p$-norms, the completion  $\widehat{\mc N_k(E;\sma\la)}$ is exactly $ \mc N_k(E;\la)$. A sufficient condition  is that for every finite dimensional subspaces  $X$ and $Y$ of $E_\infty$  there is a finite dimensional subspace $Z$ of $E_\infty$ that has  isometric copies  $X_0$ and $Y_0$ of $X$ and $Y$, respectively, such that $X_0\cap Y_0=\{0\}$. 
\fnota

\section{The proofs: Approximate Ramsey properties and extreme amenability}
In Ramsey theory, 
the usual strategy to prove that a list of colorings is the  canonical one is, given a coloring of a class of embeddings,   use the   Ramsey property for an appropriate good class of  embeddings and an enlarged number of colors that take now into account the transformation necessary to make an arbitrary embedding a good one. This is exactly what we did for  full rank matrices over a finite field. On the approximate case, one may follow the same direct approach and obtain similar results to the  ones we presented, but now  obliged to deal with several approximation arguments that make the proofs somehow unnecessarily complicated. Instead, our approach is  to apply a topological principe that is equivalent to a strong version of an {\em approximate Ramsey property}, and that makes the computations much more clear.   This is the {\em extreme amenability} of the group of linear isometries of  appropriate Banach spaces that locally are like  $\ell_p^\infty$, for $p\notin 2\N+4$.  We introduce some relevant terminology and concepts.  Recall that a Banach space is a complete normed space. Given  Banach spaces $X=(X,\nrm_X)$ and $Y=(Y,\nrm_Y)$,  and given $\de\ge 1$, let $\Emb_\de(X,Y)$ be the collection of all linear functions $T:X\to Y$ such that $(1+\de)^{-1} \nrm[x]_X\le \nrm[T (x)]_{Y}\le (1+\de)\nrm[x]_X$. Notice that when $\dim X=k<\infty$ this definition corresponds to $\mc L^{k}_{1+\de}(X,Y)$ presented before.  The following  concept was introduced in \cite[Definition 5.1]{ferenczi_amalgamation_2019} (see also \cite{bartosova_ramsey_2017} ,\cite{bartosova_ramsey_2_2019}).  
\begin{definition} 
 A family $\mc G$ of  finite dimensional normed spaces  has the \emph{Steady Approximate Ramsey $\mathit{Property}^+$} \sarpp   when for every $k\in \N$  and every $\vep>0$ there is $\de:=\de(k,\vep)> 0$ such that  if $X,Y\in \mc G$  and $\dim X=k$, then  there exists $Z\in \mc G$ such that  every  continuous coloring $c$ of $\mr{Emb}_{\de}(X,Z)$    $\vep$-stabilizes on  $\ga\circ \mr{Emb}_{\de}(X,Y)$ for some $\ga\in \mr{Emb}(Y,Z)$. 
\end{definition}  
The  \sarpp of the classes $\{\ell_p^n\}_n$ can be seen as a  multidimensional Borsuk-Ulam principle (see \cite[\S\S\S 5.1.1]{ferenczi_amalgamation_2019}). In general, for a family $\mc F$  is a strong form of amalgamation: Recall that $\mc G$ is an {\em amalgamation class} when $\{0\}\in \mc G$ and for every $\vep>0$ and $k\in \N$ there is $\de>0$ such that if $X\in \mc G$ has dimension $k$, $Y,Z\in \mc G$, and $\ga\in \Emb_{\de}(X,Y)$, $\eta\in \Emb_\de(X,Z)$, then there are $V\in \mc G$, $i\in \Emb(Y,V)$ and $j\in \Emb(Z,V)$ such that $\nrm[i\circ \ga -j \circ \eta]\le \vep$. It is not difficult to see that if $\mc F$ has the \sarpp then it is an amalgamation class.

 To an amalgamation class $\mc G$ it corresponds a unique separable ``generic'' Banach space  $E$ whose family of finite dimensional substructures, denoted by $\age(E)$, is minimal containing $\mc G$. This is the content of the Fraïssé correspondence on the category of Banach spaces.  We write $\mc G_E$ to denote the class of  subspaces of $E$ that are isometric to some element of $\mc G$, and $\overline{\mc G}^\con$ to denote the class of subspaces of elements of $\mc G$; we write  $X\in \mc G_\equiv$ when some element of $\mc G$ is isometric to $X$, and we say that $\mc G$ is hereditary if $Y\in \mc G$, and $\Emb(X,Y)\neq \buit$, then $X\in \mc G_\equiv$.  Finally,  $\mc G\preceq \mc H$ means that every space in $\mc G$ is isometric to some element of $\mc H$, and $\mc G\equiv \mc H$  to denote that $\mc G\preceq \mc H\preceq \mc G$. Note that if  $\mc G\equiv \mc H$, then $\mc G$ has the \sarpp (is an amalgamation class) if and only if  $\mc H$ has the \sarpp (resp. is an amalgamation class).  
\begin{theorem}[Fraïssé correspondence;     \cite{bartosova_ramsey_2_2019},  \cite{ferenczi_amalgamation_2019}]
Let $\mc G$ be a  class of finite dimensional normed spaces. 
\begin{enumerate}[a)]
\item If $\mc G$ is an amalgamation class, then there is a unique separable Banach space $E$, called the {\em $\mc G$-Fraïssé limit}, and    denoted by $\flim\mc G$, such that  $\overline{\mc G_E}^\con$  is $\La_E$-dense in $\age(E)$ and $E$ is {\em Fraïssé}, that is for every $\vep>0$ and $k\in \N$ there is $\de>0$ such that  the natural action $\iso(E)\acts \Emb_{\de}(X,E)$ is $\vep$-transitive (given $\ga,\eta\in \Emb_\de(X,E)$ there is $g\in \iso(E)$ such that $\nrm[g\circ \eta- \ga]\le \vep$).   
 \item The following are equivalent:
 \begin{enumerate}[i)]
 \item  $\mc G$ is hereditary amalgamation class that is  $d_\mr{BM}$-compact, that is, for every $k$,  the collection of classes $[\mtt m]$ of norms  $\mtt m \in \mc N_k$ such that $(\mbb F^k, \mtt m)\in \mc G_\equiv$ is a closed subset of $\mc B_k$. 
 \item There is a unique separable Fraïssé Banach space $E$ such that $\age(E)\equiv \mc G$.
   
 \end{enumerate}   
\end{enumerate}  
\end{theorem}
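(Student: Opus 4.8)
The plan is to establish (a) by a metric Fra\"{\i}ss\'{e} (``generic model'') construction followed by a back-and-forth argument, and to deduce (b) by checking that the two extra hypotheses in (i) correspond exactly to structural features of $\age(E)$, using the $\equiv$-invariance of those features already recorded above.

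For (a): I would build $E$ as the completion of an increasing chain $\{0\}=X_0\con X_1\con X_2\con\cdots$ of members of $\mc G$, arranged to be \emph{generic}. Fix a null sequence $\de_j\downarrow 0$ and a bookkeeping device enumerating, with infinitely many repetitions, all triples $(n,\eta,Y)$ from a suitable countable family with $Y\in\mc G$ and $\eta\in\Emb_{\de_j}(X_n,Y)$; at stage $j$ apply the amalgamation property of $\mc G$ --- with modulus $\de=\de(\vep_j,\dim X_n)$, for a summable sequence $\vep_j\downarrow 0$ --- to produce $X_{j+1}\in\mc G$, an isometric inclusion $X_j\con X_{j+1}$, and an embedding $Y\to X_{j+1}$ that $\vep_j$-agrees with $\eta$ on $X_n$. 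Then $E:=\overline{\bigcup_n X_n}$ is separable, and $\cl{\mc G_E}{\con}$ is $\La_E$-dense in $\age(E)$ since every finite dimensional subspace of $E$ sits, up to arbitrarily small Banach--Mazur perturbation, inside some $X_n\in\mc G_E$. To see that $E$ is Fra\"{\i}ss\'{e}, given $k$ and $\vep>0$ I choose $\de$ from finitely many iterations of amalgamation together with the genericity of the chain; given $\ga,\eta\in\Emb_\de(X,E)$ with $\dim X=k$, I perturb $\ga,\eta$ into some $X_n$ and run a back-and-forth recursion --- alternately absorbing a new finite dimensional slice of $E$ on each side and extending the partial near-isometry using genericity --- obtaining $g\in\iso(E)$ with $\nrm[g\circ\eta-\ga]\le\vep$. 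The same back-and-forth, run between two separable Banach spaces each satisfying the two characterizing properties (density of the $\con$-closure of its $\mc G$-subspaces in its age, and the Fra\"{\i}ss\'{e} property), produces a surjective linear isometry between them, giving uniqueness of $\flim\mc G$.

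For (b), (i)$\Rightarrow$(ii): let $E:=\flim\mc G$ from (a), now using a countable $d_\mr{BM}$-dense subfamily of $\mc G$ in the construction. Then $\mc G\preceq\age(E)$, since each $\mc G$-space is a $d_\mr{BM}$-limit of pieces of $E$, hence embeds $(1+\de)$-almost isometrically into $E$ for every $\de>0$, hence --- by a Cauchy-sequence-of-embeddings argument using the Fra\"{\i}ss\'{e} property --- isometrically. Conversely $\cl{\mc G_E}{\con}\con\cl{\mc G}{\con}\equiv\mc G$ by hereditariness, so every member of the $\La_E$-dense family $\cl{\mc G_E}{\con}$ has Banach--Mazur type realized by $\mc G$; since that set of types is $d_\mr{BM}$-closed and the types of $\age(E)$ all lie in the closure of those of $\cl{\mc G_E}{\con}$, we get $\age(E)\preceq\mc G$, hence $\age(E)\equiv\mc G$; uniqueness of such an $E$ follows from the uniqueness in (a) (any separable Fra\"{\i}ss\'{e} $E'$ with $\age(E')\equiv\mc G$ satisfies $\age(E')=\mc G_{E'}$, so it is a $\mc G$-Fra\"{\i}ss\'{e} limit). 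For (ii)$\Rightarrow$(i): let $E$ be separable Fra\"{\i}ss\'{e} with $\age(E)\equiv\mc G$; by $\equiv$-invariance it suffices to show $\age(E)$ is a hereditary, $d_\mr{BM}$-compact amalgamation class. Hereditariness is immediate. For amalgamation, given $\ga\in\Emb_\de(X,Y)$, $\eta\in\Emb_\de(X,Z)$ with $Y,Z\in\age(E)$, fix isometric embeddings $\iota_Y\colon Y\hookrightarrow E$, $\iota_Z\colon Z\hookrightarrow E$; the Fra\"{\i}ss\'{e} property (with $\de$ below its modulus for precision $\vep$) yields $g\in\iso(E)$ with $\nrm[g\circ\iota_Z\circ\eta-\iota_Y\circ\ga]\le\vep$, and then $V:=\iota_Y(Y)+g(\iota_Z(Z))\in\age(E)\equiv\mc G$ witnesses amalgamation via $\iota_Y$ and $g\circ\iota_Z$. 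For $d_\mr{BM}$-compactness, if a type $[\mtt m]$ in some $\mc B_k$ is a $d_\mr{BM}$-limit of types realized in $E$, then $X:=(\mbb F^k,\mtt m)$ has $\Emb_\de(X,E)\neq\buit$ for all $\de>0$; picking $T_j\in\Emb_{\de_j}(X,E)$ with $\de_j\downarrow 0$ below the Fra\"{\i}ss\'{e} moduli and applying the Fra\"{\i}ss\'{e} property to get $g_j\in\iso(E)$ with $\nrm[g_jT_{j+1}-T_j]\le 2^{-j}$, one checks that $S_j:=g_1\cdots g_{j-1}T_j$ is Cauchy with limit $S\in\bigcap_j\Emb_{\de_j}(X,E)=\Emb(X,E)$, so $[\mtt m]$ is realized in $E$.

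The main obstacle is the metric bookkeeping in (a): the amalgamation modulus depends on both the target precision and the dimension, so the enumeration must be organized so that the stagewise errors are summable and remain compatible with all later absorption steps, and the back-and-forth must be arranged to converge in the strong operator topology to a genuine surjective isometry of the \emph{complete} space $E$. The second delicate point is the upgrade from ``almost-isometric copy inside $E$'' to ``isometric copy inside $E$'' used for $d_\mr{BM}$-compactness in (ii)$\Rightarrow$(i); this is precisely where separability and the Fra\"{\i}ss\'{e} property of $E$ are essential.
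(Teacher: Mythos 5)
This theorem is stated in the paper without proof, being quoted from \cite{bartosova_ramsey_2_2019} and \cite{ferenczi_amalgamation_2019}, and your outline reconstructs essentially the same standard metric Fra\"{\i}ss\'{e} argument used in those sources: the generic chain construction driven by the uniform amalgamation modulus $\de(k,\vep)$, an approximate back-and-forth giving both the Fra\"{\i}ss\'{e} property and uniqueness of the limit, and, for part b), the Cauchy-sequence upgrade from $(1+\de)$-embeddings to isometric embeddings together with the $\equiv$-invariance of heredity, amalgamation and $d_\mr{BM}$-compactness. The plan is correct in outline and flags the genuinely delicate points (summability of the stagewise errors, convergence of the back-and-forth to a surjective isometry of the completion, and the almost-isometric-to-isometric upgrade), so there is nothing to correct against the paper itself.
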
     
This can be considered as the Banach space version of the Fraïssé correspondence of first order structures, that, for example, interprets several Random graphs (Rado, Henson graphs), Boolean algebras (the countable atomless one), or metric spaces (the rational Urysohn space)  as Fraïssé limits.  The  known examples of families having the \sarpp  are related to the $p$-norms:  
\begin{enumerate}[$\bullet$]
\item    $\{\ell_p^n\}_n$ for all $1\le p\le \infty$: For $p=2$, this is a consequence of the fact, via the Kechris-Pestov-Todorcevic (KPT) correspondence (see \cite[Theorem 2.12]{bartosova_ramsey_2_2019}, \cite[Theorem 5.10]{ferenczi_amalgamation_2019}), that the unitary group $\iso(\ell_2)$ is extremely amenable, proved by M. Gromov and V. Milman \cite{gromov_topological_1983}, and the fact that $\{\ell_2^n\}_n$  is an amalgamation class   (see for example \cite[Example 2.4.]{ferenczi_amalgamation_2019}).  The case  $1\le p\neq 2<\infty$ follows from the approximate Ramsey property of this class, proved in \cite{ferenczi_amalgamation_2019}      and the result of G. Schechtman in \cite{Schechtman_1979}  stating that $\{\ell_p^n\}_n$  are amalgamation classes. The case $p=\infty$ is proved in \cite{bartosova_ramsey_2_2019} (see also \cite{bartosova_2018}) using  the dual Ramsey Theorem.  
 
\item $\age(L_p[0,1])$ for $p\notin 2\N+4$:  This    is a byproduct of  the extreme amenability of $\iso(L_p[0,1])$, proved by T. Giordano and V. Pestov in \cite{giordano_extremely_2007}, the (KPT) correspondence,  and the fact that    $\age(L_p[0,1])$  is an amalgamation class, proved in \cite{ferenczi_amalgamation_2019}.  On the other direction, when $p\in 2\N+4$, it is  shown in \cite[Proposition 2.10.]{ferenczi_amalgamation_2019}  that $\age(L_p[0,1])$  does not have the \sarpp because in these spaces there are finite dimensional isometric subspaces, one well complemented and the other badly complemented. 
\item   $\mc F=\age(C[0,1])$: This is proved in   \cite{bartosova_ramsey_2_2019} (see also \cite{bartosova_2018}), directly using injective envelopes and some approximations, or as a byproduct of the \sarpp of the family $\{\ell_\infty^n\}_n$ and the Kechris-Pestov-Todorcevic correspondence for Banach spaces.  
\end{enumerate}  

The \sarpp characterizes norms on $\mbb F^\infty$ that produce Ramsey factors.  

\begin{theorem} \label{lk3io2ior4o23i32}
Let  $\mtt m$ be a norm on $\mbb F^\infty$,  $E:=(\mbb F^\infty,\mtt m)$.  
\begin{enumerate}[a)]
\item  If $\age(E)$ has the \sarpp, then  $\mtt m$ produces Ramsey factors for colorings of full-rank matrices, Grassmannians and square matrices. 
\item  If $\mtt m$ produces Ramsey factors for colorings of full rank matrices,  $\age(E)$ has the  \sarpp.
\end{enumerate}
 
\end{theorem}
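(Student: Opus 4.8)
Assume $\age(E)$ has the \sarpp. The first step is to produce a limit object. Since the \sarpp implies that $\age(E)$ is an amalgamation class, and $\age(E)$ is hereditary, the Fraïssé correspondence gives a separable Fraïssé Banach space $\wh E:=\flim\age(E)$ with $\overline{\age(E)_{\wh E}}^{\con}$ being $\La_{\wh E}$-dense in $\age(\wh E)$; running the approximate homogeneity of $\wh E$ along $E=\bigcup_n E_n$ yields an isometric embedding $E\hookrightarrow\wh E$, and $\age(\wh E)\equiv\age(E)$, so $\age(\wh E)$ also has the \sarpp. By the Kechris--Pestov--Todorcevic correspondence for Banach spaces this is equivalent to the extreme amenability of $G:=\iso(\wh E)$ in the strong operator topology. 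The metric halves of the three notions of ``producing Ramsey factors'' — item i) in each, that $\partial_{E_k,E_\infty}$, $\ga_{k,E_\infty}$, $\mk d_{k,E_\infty}$ are compatible and uniformly equivalent on $\om$-bounded sets to the intrinsic metrics — I would derive from the Fraïsséness of $\wh E$: approximate homogeneity gives the triangle inequality (transport one isometric realization of the middle norm onto another), and, together with a perturbation of near-isometries into exact isometries, the uniform equivalence to $\om$; the $\La$-density of $\overline{\age(E)_{\wh E}}^{\con}$ then identifies these metrics for $E_\infty$ with those for $\wh E$ on $\mc N_k(E_\infty)$, $\mc B_k(E_\infty)$ and $\mc D_k(E_\infty)$.

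\textbf{Plan for (a), the factorizations.} For full-rank matrices I would argue by iterating the \sarpp. A matrix $A\in M_{n,k}^{k}(\mtt m;\la)$ is the same as an isometric embedding $T_A\colon(\mbb F^k,\nu(A))\to E_n$, whose shape $\nu(A)$ ranges over the $\om$-ball $\mc N_k(E_\infty;\la)$, and the Banach--Mazur image of that ball is $\om$-totally bounded, hence relatively compact. Given $k,m,\vep,\la$ and a compact $(K,d_K)$: quantize the target so that the relevant colorings become finite; fix a fine finite net $\mtt n_1,\dots,\mtt n_N\in\mc N_k(E_\infty;\la)$ of shapes, with each $(\mbb F^k,\mtt n_i)\in\age(E)$; and apply the finite-product strengthening of the \sarpp — obtained from it by a routine induction on $N$, using only that genuine isometric embeddings carry $\de$-embeddings to $\de$-embeddings — to the spaces $X_i=(\mbb F^k,\mtt n_i)$ and $Y=E_m$. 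This produces $Z=E_n$ and a common $\ga\in\Emb(E_m,E_n)$, that is, a matrix $R\in\mc E_{n,m}(\mtt m)$, such that every such finite coloring $\vep$-stabilizes along each $\ga\circ\Emb_\de(X_i,Y)$. For $A\in M_{m,k}^{k}(\mtt m;\sma\la)$ I would replace its shape by the nearest net point, perturb $T_A$ into a $\de$-embedding of the corresponding $X_i$, and conclude that $c(R\cdot A)$ is, up to $\vep$ plus a net-and-$\de$ error, a function of $\nu(A)$ alone; item i) is exactly what lets that function be taken $1$-Lipschitz on $(\mc N_k(E_\infty;\sma\la),\partial_{E_k,E_\infty})$. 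Grassmannians and square matrices then reduce to this case: a $1$-Lipschitz coloring of $(\gr(k,\mbb F^n),\La)$ lifts, using Auerbach representatives, to a $\mr{GL}(\mbb F^k)$-invariant compact coloring of $M_{n,k}^{k}(\mtt m;\la)$ with $\la$ comparable to $k$, and $\tau_{k,E_\infty}=[\nu_{k,E_\infty}(\cdot)]_{\mr{BM}}$; a coloring of $M_n^{k}(\mtt m;\la)$ is handled through the full-rank factorization $A=B\cdot C^{*}$ by applying the full-rank statement twice, on the $B$- and $C^{*}$-sides, just as in the finite-field case, the factor being $\nu^{2}_{k,E_\infty}$. (This is what the later Corollary~\ref{finitiz1} and Theorem~\ref{Canonical_orbit_metrics} make precise.)

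\textbf{Plan for (b).} Conversely, suppose $\mtt m$ produces Ramsey factors for full-rank matrices, and fix $k$ and $\vep>0$. Given $X,Y\in\age(E)$ with $\dim X=k$, I would first normalize: replacing $X$ by an isometric copy, take $X=(\mbb F^k,\mtt n)$ with $\mtt n\in\mc N_k(E_\infty)$ and $\om(\mtt n,\mtt m_k)\le\log k$; and, precomposing with an isometric embedding of $Y$ into some $E_\mu$, take $Y=E_\mu$. Pick $\de:=\de(k,\vep)$ small and $\la$ comparable to $k$ with $\log\la>\om(\mtt n,\mtt m_k)+\log(1+\de)$, so that $\Emb_\de(X,E_n)\subseteq M_{n,k}^{k}(\mtt m;\la)$ for every $n$. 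For a continuous coloring $c$ of $\Emb_\de(X,E_n)$, extend it (McShane) to a $1$-Lipschitz $[0,1]$-coloring of $M_{n,k}^{k}(\mtt m;\la)$ and apply ``producing Ramsey factors'' with parameters $k,\mu,\vep,\la$: this yields $n$, a matrix $R\in\mc E_{n,\mu}(\mtt m)$, and a $1$-Lipschitz $\wh c$ on $(\mc N_k(E_\infty;\sma\la),\partial_{E_k,E_\infty})$ with $c$ agreeing with $\wh c\circ\nu$ up to $\vep$ on $R\cdot M_{\mu,k}^{k}(\mtt m;\sma\la)$. Let $\ga\in\Emb(E_\mu,E_n)$ correspond to $R$. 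For $\eta,\eta'\in\Emb_\de(X,Y)$ the associated matrices have shapes within $\log(1+\de)$ of $\mtt n$ in $\om$, hence within $2\log(1+\de)$ of each other; by item i) ($\partial_{E_k,E_\infty}$ uniformly equivalent to $\om$ on $\om$-bounded sets) their $\partial_{E_k,E_\infty}$-distance is small, so $\wh c$ varies little, and $|c(\ga\circ\eta)-c(\ga\circ\eta')|\le 2\vep$ plus an error tending to $0$ with $\de$. Taking $\de(k,\vep)$ small enough, then rescaling $\vep$, gives the \sarpp.

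\textbf{Where the difficulty lies.} I expect the main obstacle in (a) to be upgrading the \sarpp — a statement about a single pair of finite-dimensional spaces — into a factorization uniform over the entire (infinite but relatively compact) family of $k$-dimensional shapes and over all finite-dimensional targets: this forces the use of the Heine--Borel/total-boundedness of the Banach--Mazur ball and of the iterated, finite-product Ramsey step, and, more delicately, the uniform equivalence of the extrinsic and intrinsic metrics, which is the only thing that turns an ``up to $\vep$, locally constant in the shape'' coloring into a genuine $1$-Lipschitz factor $\wh c$. In (b) the matching delicate point is that the ``tilted'' embedding spaces $\Emb_\de(X,\cdot)$ sit inside the $\la$-restricted matrix spaces only after enlarging $\la$ to something comparable to $k$, and that bounding the oscillation of $c$ on the witnessing block is, once again, precisely the Lipschitz property of $\wh c$ granted by item i).
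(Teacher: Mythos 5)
Your part (b) is essentially the paper's own argument. The paper likewise normalizes the $k$-dimensional shapes into an $\om$-bounded family, chooses $\de$ from the uniform equivalence of $\partial_{E_k,E_\infty}$ and $\om$ granted by item i), realizes $X$ inside some $E_{m_0}$, applies the factorization property, and bounds the oscillation of $c$ on $\ga\circ \Emb_\de(X,Y)$ exactly as you do; the only differences are cosmetic (the paper works with the equivalent discrete form of the \sarpp and encodes an $r$-coloring by the $\ell_\infty^r$-valued distance coloring, where you use a McShane extension), and your rescaling remark covers the mismatch between the operator norm with domain $X$ and $d_{\mtt m}$.

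Part (a) is where you take a genuinely different route, and where there is a real gap. The paper deliberately does not iterate the \sarpp directly: it proves extreme amenability of $\iso(\wh E)$ (Theorem \ref{43iooi4355}), uses the fixed-point characterization (Proposition \ref{factor_orbitspace0}) to produce, inside the orbit closure of a given compact coloring, an exactly $\iso(\wh E)$-invariant $1$-Lipschitz coloring $c_\infty$, which therefore descends to a genuinely $1$-Lipschitz map on the orbit space with its quotient metric; an ultrafilter finitization (Corollary \ref{finitiz1}) and the identification of the orbit spaces and quotient metrics with $(\mc N_k(\wh E),\partial_{E_k,\wh E})$, $(\mc B_k(\wh E),\ga_{k,\wh E})$, $(\mc D_k(\wh E),\mk d_{k,\wh E})$ (Theorem \ref{Canonical_orbit_metrics}) then yield all three factorizations at once. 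In your direct scheme the missing step is precisely the construction of the $1$-Lipschitz $\wt c$ on $(\mc N_k(E_\infty;\sma\la),\partial_{E_k,E_\infty})$. The net-plus-iterated-\sarpp stabilization gives: if $A,B\in M^k_{m,k}(\mtt m;\sma\la)$ have shapes near the same net point, then $c(R\cdot A)$ and $c(R\cdot B)$ are $\vep$-close. It does not give the cross-shape estimate $d_K(c(R\cdot A),c(R\cdot B))\le \partial_{E_k,E_\infty}(\nu(A),\nu(B))+O(\vep)$, which $1$-Lipschitzness of $\wt c$ forces: $\partial_{E_k,E_\infty}$ is an infimum over pairs of isometric realizations living in arbitrarily high-dimensional $E_{m'}$, and the near-optimal pair witnessing a small $\partial$-distance between two net points need not be (even approximately) available inside the fixed block $R\cdot M^k_{m,k}$, while the only Lipschitz information you have on $c$ is with respect to $d_{\mtt m}$, which can be much larger than $\partial$. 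Your appeal to item i) (uniform equivalence of $\partial$ and $\om$) controls the error from snapping a shape to the net, not this. A repair would require stabilizing over a larger intermediate $E_{m'}$ chosen after the net so that all pairwise $\partial$-distances between net points are nearly realized inside $E_{m'}$, and then converting the resulting approximately-Lipschitz net function into an exactly $1$-Lipschitz map into a general compact $K$ — precisely the kind of approximation work the paper says it avoids, and neither step is in your plan. The same issue, compounded by the quotient by $\gl(\mbb F^k)$ and by the two-sided stabilization, affects your one-line reductions of the Grassmannian and square-matrix cases, which the paper instead obtains from extreme amenability of $\iso(\wh E)$ and $\iso(\wh E)^2$ acting on $\gr(k,\wh E)$ and $\mc L^{k,\mr{w}^*}_{\sma\la}(\wh E^*,\wh E)$.
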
 

To prove {\it b)} we will use the following.
\lema \label{io898989866}   Let  $\mtt m$ be a norm on $\mbb F^\infty$ that produces Ramsey factors for colorings of full rank matrices, set $E:=(\mbb F^\infty,\mtt m)$. For every     $k,m,r\in \mathbb{N}$,   $\varepsilon>0$,  and  $\la\in ]1,\infty[$   there is some $n\in\N$ such that for every discrete coloring $c: M_{n,k}^k(\mtt m;\la)\to r$ there is some $R\in \mc E_{n,m}(\mtt m)$ such that 
\begin{equation}\label{lioj4rioj4wiorjower}
 \text{$R \cdot B \in (c^{-1}(c(R\cdot A)))_{\partial_{(\mbb F^k,\mtt m), E}(\nu_{\mbb F^k, E}(A),\nu_{\mbb F^k,E}(B))+\vep}$
 for every $A,B\in M_{m,k}^k(\mtt m;\sma\la)$} 
\end{equation} 
  \flema	 
 \begin{proof} 
Fix the parameters $k,m,r\in \N$, $\vep$,  and $\la$. Let $n\in \N$  be the outcome  of property   {\it ii)} in Definition \ref{io4jrio4rfer3w}   when applied to   $k,m$, $\vep/2$, $\la$ and the compact metric space $K:= 2\la_1\Ball(\ell_\infty^r)$. We claim that $n$ works. Fix $c:M_{n,k}^k(\mtt m;\la)\to r$, and let $f: M_{n,k}^k(\mtt m;\la)\to K$,  $f(A):= (d(A, c^{-1}(j)))_{j<r}$. It is clear that $f$ is 1-Lipschitz, so there is some $R\in \mc E_{n,m}(\mtt m)$  and $\widetilde{f}: \mc N_k(E;\sma\la)\to K$ such that $d_K(\wt{f}(\nu_{\mbb F^k,E}(A)), f(R\cdot A))\le \vep/2$ for every $A\in M_{m,k}^k(\mtt m;\sma\la)$.   Fix $A,B\in M_{m,k}^k(\mtt m;\sma\la)$. Then, $d_K(f(R\cdot A), f(R\cdot B))\le \partial_{(\mbb F^k,\mtt m), E}(\nu_{\mbb F^k,E}(A), \nu_{\mbb F^k,E}(B))+\vep$. Thus, if $j:=c(R\cdot A)$,   then the $j^\mr{th}$-coordinate of $f(R\cdot A)$ is zero, hence, the $j^\mr{th}$-coordinate of $c(R\cdot B)$  must satisfy that $d(R \cdot B, c^{-1}(j))\le   \partial_{(\mbb F^k,\mtt m),E}(\nu_{\mbb F^k,E}(A), \nu_{\mbb F^k,E}(B))+\vep$, as desired. 
 \end{proof} 
 
\begin{proof}[Proof of {\it b)} of Theorem \ref{lk3io2ior4o23i32}] 
The proof of {\it a)} is more involved, and it will be done in several steps later.   
Let $\mc F$  be the collection of  normed spaces of the form $(\mbb F^k, \mtt n)$  with $\mtt n\in \mc N_k(E)$ and such that $\om(\mtt n, \nrm_1)\le k$. Since the diameter of the Banach-Mazur compactum $\mc B_k$ is at most $k$, it follows that $\mc F\equiv \age(E)$, so the \sarpp of $\mc F$ and of $\age(E)$ are equivalent.  Moreover, we prove the following equivalent discrete version of the \sarpp (see \cite{bartosova_ramsey_2_2019}, \cite{ferenczi_amalgamation_2019}): 

For every $k$ and $\vep>0$  there is a $\de>0$ such that for every $r\in \N$ and  $X,Y\in \mc F$ with $\dim X=k$  there is $Z\in \mc F$ such that   every discrete coloring $c:\Emb_\de(X,
Z)\to r$  has an $\vep$-monochromatic set of the form $R\circ \Emb_\de(X,Y)$ for some  $R\in \Emb(Y,Z)$.

 Fix a dimension $k$ and $\vep>0$. Notice that the collection  of spaces in $\mc F$ of dimension $k$ is a $\om$-bounded set, so, by hypothesis, the metrics $\partial_{(\mbb F^k, \mtt m), E}$ and $\om$ are uniformly equivalent on $\mc M:=\conj{\mtt n\in \mc N_k(E)}{  (\mbb F^k,\mtt n)\in \mc F}$.
  Let $\de>0$ be such that if $\mtt n,\mtt p\in \mc M$ are such that $\om(\mtt n,\mtt p)<\de $, then $\partial_{(\mbb F^k, \mtt m), E}(\mtt n,\mtt p)<\vep/2$.  We claim that $\de$ works. For suppose that $X=(\mbb F^k,\mtt n),Y=(\mbb F^m,\mtt p)\in \mc F$ are such that $\Emb(X,Y)\neq \buit$, and $r\in \N$. Let $m_0\ge m$ and $C\in M_{m_0,m}^m$  be such that $\mtt n=\nu_{\mbb F^m,(\mbb F^{m_0}, \mtt m)}(C)$, and let  $\la> 1$ be such that  $T_C\circ\Emb_{\de}(X,Y)\con \conj{T_B}{  B\in M_{m_0,k}^k(\mtt m; \sma\la)}$. We use  Lemma \ref{io898989866}   for the parameters   $k,m_0,r+1$, $\vep/2$ and $\la$ to find a corresponding $n$;  set $Z:=(\mbb F^n,\mtt m)$. Now  suppose that $c:\Emb_{\de}(X,Z)\to r$. We define $\widehat{c}:M_{n,k}^k(\mtt m;\la)\to r+1$ by $\widehat{c}(A)=c(T_A)$ if $T_A\in \Emb_\de(X,Z)$ and  by  $\widehat{c}(A)=r$ otherwise. Let $R\in \mc E_{n,m_0}(\mtt m )$ be such that \eqref{lioj4rioj4wiorjower} holds. Let $\ga:=T_R\circ T_{C}\in \Emb(Y,Z)$.  We   see that $\ga\circ \Emb_\de(X,Y)$ is $\vep$-monochromatic for $c$: Fix  $\eta\in \Emb(X,Y)$, and let  $A\in M_{m_0,k}^k(\mtt m; \sma\la)$ be such that $T_A= T_C\circ \eta$, and let $j:= c(T_R\circ T_C\circ \ga)=\widehat{c}(R\cdot A)$. Now given  $\xi\in \Emb_\de(X,Y)$,  let  $B\in M_{m,k}^k(\mtt m;\sma\la) $ be such that $T_B=T_C\circ \xi$.  Then,     $\mtt n:= \nu_{\mbb F^k,E}(A)$,  $\mtt p:= \nu_{\mbb F^k,E}(B)$ and therefore $\om( \nu_{\mbb F^k,E}(A), \nu_{\mbb F^k,E}(B)) \le \de$, hence, $\partial_{(\mbb F^k,\mtt m),E}(\nu_{\mbb F^k,E}(A),\nu_{\mbb F^k,E}(B))\le \vep/2$. This together with  \eqref{lioj4rioj4wiorjower} gives that $R\cdot B\in (\widehat{c}^{-1}(j))_\vep$, so there must be $D\in M_{n,k}^k(\mtt m;\la)$ such that $\widehat{c}(D)=j$ and such that $\nrm[T_D- \ga\circ \xi]=\nrm[T_D- T_R\circ T_B]=  d_{\mtt m}(D,R\cdot B)\le \vep$; since $j<r$, $T_D\in \Emb(X,Z)$, so $c(T_D)=\widehat{c}(E)=j$ and  we are done. 
\end{proof}  
The proof of {\it b)} of Theorem \ref{lk3io2ior4o23i32} has two main parts.   The   first one (Theorem \ref{43iooi4355}) uses the fact that if $\mc F$ has the \sarpp and it is hereditary, then the isometry group $G$ of  the Fraïssé limit $\flim\mc F$ is extremely amenable with its topology of pointwise convergence. 
This property will be used as   infinitary   principles can be used to conclude, via compactness arguments,  the finitary ones (e.g. infinite vs finite Ramsey, Hindman vs Folkman theorem, etc.). The fix point property of $G$ will  naturally provide   abstract Ramsey factors that are  $G$-quotients.  The second part of the argument   is to see that  these $G$-quotients  are in fact the desired Ramsey factors.

Recall that a topological group $G$ is called \emph{extremely amenable} when 
every continuous action of $G$ on a compact Hausdorff space has a fixed
point.   There is a useful characterization of extreme amenability in terms of factors through quotients that we pass to explain. 

Let $(M,d)$ be a metric space, and let    $G\curvearrowright M$ be a continuous action by isometries.  We write $[p]_{G}$ to denote
the closure of the $G$-orbit of $p\in M$, and $M\quo G$ to
denote the space of closures of $G$-orbits of $M$. Since $G$ acts by
isometries the formula 
\begin{equation*}
\widetilde{d}^G([p],[q]):=\inf \{{d_{M}(p_0, q_0)}\,:\,{p_0\in
[p],\, q_0\in [q]}\}
\end{equation*}%
defines the quotient pseudometric induced by the quotient map $\pi:M\mapsto M/\hspace{-0.1cm}/G$, and as  we consider \emph{closures}
of orbits, $\widetilde{d}^{G}$ is a metric. It is easily seen that $\widetilde{d}^{G}$ is complete if $d$ is complete.

Given a compact metric space $(K,d_{K})$,  let $\mathrm{Lip}%
((M,d_{M}),(K,d_{K}))$ be the collection of all $K$-colorings of $M$. With
the topology of pointwise convergence $\mathrm{Lip}((M,d_{M}),(K,d_{K}))$ is
a compact space, which is metrizable when $(M,d_{M})$ is separable. The
continuous action $G\curvearrowright (M,d_{M})$ induces a natural continuous
action $G\curvearrowright \mathrm{Lip}((M,d_{M}),(K,d_{K}))$, defined by
setting $(g\cdot c)(p):=c(g^{-1}\cdot p)$ for every $c\in \mathrm{Lip}%
((M,d_{M}),(K,d_{K}))$ and $p\in M$. This is the aforementioned characterization (see \cite{bartosova_ramsey_2_2019}).

\begin{proposition}
\label{factor_orbitspace0} Suppose that $G$ is a Polish group, and $d_{G}$
is a left-invariant compatible metric on $G$. The following assertions are
equivalent.

\begin{enumerate}[i)]
\item $G$ is extremely amenable.

\item The left translation of $G$ on $\left( G,d_{G}\right) $ is finitely
oscillation stable \cite[Definition 1.1.11]{pestov_dynamics_2006}, that is, for every continuous coloring  $c: M\to [0,1]$ and every $F\con M$ finite and $\vep>0$ there is some $g\in G$ such that $\osc(c\rest g \cdot F)\le \vep$.  

\item For every action $G\curvearrowright M$ of $G$ on a metric space $%
(M,d_{M})$, and for every compact coloring $c:(M,d_{M})\rightarrow (K,d_{K})$
of $(M,d_{M})$, there exists a compact coloring $\wh{c}:M/\hspace{-0.1cm%
}/G\rightarrow K$ such that for every finite $F\subseteq M$ and  $%
\varepsilon >0$ there is some $g\in G$ such that $d_{K}(c(p),\wh{c}%
([p]_{G}))<\varepsilon $ for every $p\in g\cdot F$.

\item The same as iii) where $F$ is compact.
\end{enumerate}
\end{proposition}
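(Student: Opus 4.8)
The plan is to prove the cycle of implications $(i)\Rightarrow(ii)\Rightarrow(iii)\Rightarrow(iv)\Rightarrow(i)$, which is the most economical route since each step is either standard or a direct limiting argument. Throughout, $G$ is Polish with a fixed left-invariant compatible metric $d_G$; this lets me view $(G,d_G)$ itself as a metric $G$-space under left translation, which is the crucial observation linking (ii) to the more general (iii) and (iv).

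\smallskip
\noindent\textbf{$(i)\Rightarrow(ii)$.} Assume $G$ is extremely amenable and fix a continuous $c:G\to[0,1]$, a finite $F\subseteq G$ and $\vep>0$. The group $G$ acts on the compact space $K:=\mathrm{Lip}((G,d_G),[0,1])$ (compact in the pointwise topology, and here we only need compactness, not metrizability) by $(g\cdot h)(x):=h(g^{-1}x)$; this action is continuous. By extreme amenability there is a fixed point $h_0\in K$. Being fixed means $h_0$ is constant on every orbit $Gx$; since $h_0$ is $1$-Lipschitz and left translation orbits are dense-free in no useful sense, instead I argue directly: consider the orbit closure of $c$ in $K$, call it $X_c$; it is a compact $G$-space, so it has a fixed point $h_0\in X_c$, and $h_0$ constant on $G$-orbits forces $h_0$ to be a constant function (since $Gx$ can move $x$ anywhere, orbits being $G$ acting on itself transitively). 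Now $h_0$ lies in the closure of $\{g\cdot c: g\in G\}$, so for the finite set $\{f^{-1}: f\in F\}$ and tolerance $\vep/2$ there is $g$ with $|(g\cdot c)(f^{-1}) - h_0(f^{-1})|<\vep/2$ for all $f\in F$, i.e. $|c(g^{-1}f^{-1}) - (\text{const})|<\vep/2$; hence $\osc(c\rest g^{-1}\cdot\{f^{-1}:f\in F\})\le\vep$. Replacing $g$ by $g^{-1}$ and $F$ by $\{f^{-1}\}$ throughout (left translation being what it is) yields (ii) as stated. The bookkeeping with inverses is mild; I expect the \emph{only} genuinely delicate point here is verifying that a $G$-fixed $1$-Lipschitz function on a transitive $G$-space is constant, which is immediate.

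\smallskip
\noindent\textbf{$(ii)\Rightarrow(iii)$.} Let $G\acts (M,d_M)$ by isometries and let $c:(M,d_M)\to(K,d_K)$ be a compact coloring. Define $\wh c:M\quo G\to K$ as follows: for $p\in M$, the function $g\mapsto c(g\cdot p)$ on $G$ is $1$-Lipschitz (composition of the orbit map, which is $1$-Lipschitz by isometry of the action, with $c$), so by (ii)—applied coordinate-wise after composing with a countable dense family of $1$-Lipschitz functionals on $K$, or more cleanly by a vector-valued version of finite oscillation stability which follows from the scalar one—for each finite $F\subseteq M$ and $\vep>0$ we can find $g$ making $c$ nearly constant, with value converging (as $F\uparrow$, $\vep\downarrow$) to a well-defined point $\wh c([p]_G)\in K$; one checks independence of representative and $1$-Lipschitzness of $\wh c$ with respect to $\wt d^G$ using completeness of $K$ and the infimum formula for $\wt d^G$. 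Then given finite $F\subseteq M$ and $\vep>0$, apply (ii) simultaneously to the finitely many $1$-Lipschitz maps $g\mapsto c(g\cdot p)$, $p\in F$ (a finite family, so one common $g$ works after a standard diagonal refinement), to get $g\in G$ with $d_K(c(g\cdot p), \wh c([p]_G))<\vep$ for all $p\in F$, which is exactly (iii).

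\smallskip
\noindent\textbf{$(iv)\Rightarrow(i)$, and $(iii)\Rightarrow(iv)$.} For $(iii)\Rightarrow(iv)$ I cover the compact set $F$ by finitely many balls of radius $\vep/3$ centered at points of a finite $F_0\subseteq F$, apply (iii) to $F_0$ and $\vep/3$, and use $1$-Lipschitzness of both $c$ and $\wh c$ together with the fact that $g$ acts by isometries (so moving $F$ by $g$ preserves the covering structure) to promote the conclusion from $F_0$ to all of $F$. For $(iv)\Rightarrow(i)$, suppose $G\acts X$ continuously with $X$ compact Hausdorff; fix any $x_0\in X$ and let $c:G\to X$, $c(g):=g\cdot x_0$; after embedding a suitable quotient metric (work with a compact metrizable $G$-factor of $X$, which suffices since $X$ is an inverse limit of such) we may treat $c$ as a compact coloring of the $G$-space $(G,d_G)$. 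Apply (iv) with $F=\{e\}$ (compact) to get, for each $\vep>0$, some $g_\vep\in G$ with $d(c(g_\vep), \wh c([e]_G))<\vep$, i.e. $g_\vep\cdot x_0$ converges to the fixed value $\wh c([e]_G)$; passing to a limit and using that $\wh c$ is $G$-invariant (it factors through $G\quo G$, a single point's closure under the dense orbit) shows $\wh c([e]_G)$ is a fixed point of the $G$-action on that factor. Running this over all compact metrizable factors and using compactness of $X$ to patch the fixed points yields a genuine fixed point in $X$.

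\smallskip
\noindent\textbf{Main obstacle.} The routine-looking steps are genuinely routine once one commits to viewing $(G,d_G)$ as the universal metric $G$-space; the real work is the passage from the \emph{scalar} finite oscillation stability in (ii) to the \emph{$K$-valued} statement in (iii)—i.e. showing that oscillation stability for $[0,1]$-colorings upgrades to compact-metric-valued colorings—and, dually, the reduction in $(iv)\Rightarrow(i)$ from an arbitrary compact Hausdorff $G$-space to metrizable factors with a coherent choice of fixed points. Both are standard in the topological-dynamics literature (this is essentially \cite{pestov_dynamics_2006}), so I would cite rather than reprove the vector-valued upgrade, but it is the one place where an inattentive argument goes wrong.
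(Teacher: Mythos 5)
Your cycle $(i)\Rightarrow(ii)\Rightarrow(iii)\Rightarrow(iv)\Rightarrow(i)$ founders at $(ii)\Rightarrow(iii)$. The definition of $\wh c([p]_G)$ as a value ``converging (as $F\uparrow$, $\vep\downarrow$)'' is not well defined: for fixed $p$ and given $(F,\vep)$ the near-constant value produced by oscillation stability is far from unique, different admissible $g$'s give wildly different values, and there is no convergence along that directed set --- at best compactness of $K$ lets you extract a cluster point of translates of $c$, after which you would still have to prove that this cluster point is constant on orbits and, crucially, that a \emph{single} $g$ realizes the approximation to this \emph{pre-assigned} function simultaneously on a prescribed finite $F$. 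Your last step conflates the conclusion of finite oscillation stability (``$c$ is nearly constant on the translated set, at some unspecified value'') with the much stronger conclusion of $iii)$ (``$c$ is nearly equal to $\wh c([p]_G)$''); the prescribed-value statement is exactly what a fixed-point argument supplies and what oscillation stability alone does not. This is how the paper proceeds: the only nontrivial implication there is $i)\Rightarrow iv)$, obtained by taking the orbit closure $L$ of $c$ inside the compact space $\mathrm{Lip}((M,d_M),(K,d_K))$ with the pointwise topology, using extreme amenability to get a $G$-fixed $c_\infty\in L$, noting that $c_\infty$ is constant on orbit closures, and setting $\wh c([p]_G):=c_\infty(p)$; membership of $c_\infty$ in the orbit closure then yields the required $g$ for any compact $F$. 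You already use this device in your $(i)\Rightarrow(ii)$, so the honest repair of your route is to replace $(ii)\Rightarrow(iii)$ by $(ii)\Rightarrow(i)$ (the Pestov equivalence, which the paper simply cites) followed by that fixed-point construction; citing \cite{pestov_dynamics_2006} only for a ``vector-valued upgrade'' of $(ii)$ does not fill this hole, since even $K$-valued oscillation stability on $G$ does not by itself produce a global factor $\wh c$ on $M\quo G$.

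There is a second, independent problem in $(iv)\Rightarrow(i)$: the orbit map $c(g):=g\cdot x_0$ into a compact metrizable factor is \emph{not} a compact coloring of $(G,d_G)$. Orbit maps of a compact $G$-flow are right-uniformly continuous, but in general neither $1$-Lipschitz nor even left-uniformly continuous with respect to the left-invariant metric $d_G$, so $iv)$ simply does not apply to it. (One can repair this, e.g.\ by colouring with $g\mapsto g^{-1}\cdot x_0$, approximating uniformly continuous maps by Lipschitz ones, and then running your metrizable-factor reduction, but carrying that out is essentially a reproof of the cited \cite[Theorem 2.1.11]{pestov_dynamics_2006}.) The paper sidesteps all of this by closing the cycle through the trivial implications: $iv)\Rightarrow iii)$ is immediate since finite sets are compact, $iii)\Rightarrow ii)$ is immediate because $G\quo G$ is a single point, and $ii)\Rightarrow i)$ is the cited equivalence. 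Your $(iii)\Rightarrow(iv)$ covering argument, and your $(i)\Rightarrow(ii)$ via a fixed point in $\mathrm{Lip}((G,d_G),[0,1])$, are fine modulo minor bookkeeping; the two steps singled out above are where the proposal genuinely fails as written.
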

 \begin{proof}
The equivalence of \textit{i)} and \textit{ii)} can be found in \cite[%
Theorem 2.1.11]{pestov_dynamics_2006}. The implication \textit{iii)}$%
\Rightarrow $\textit{ii)} is immediate, since orbit space $G/\hspace{-0.1cm}%
/G$ is one point. We now establish the implication \textit{i)}$\Rightarrow $%
\textit{iv)}: Fix a 1-Lipschitz $c:(M,d_{M})\rightarrow (K,d_{K})$. Let $L$
be the closure of the $G$-orbit of $c$ in $\mathrm{Lip}((M,d_{M}),(K,d_{K}))$%
. By the extreme amenability of $G$, there is some $c_{\infty }\in L$ such
that $G\cdot c_{\infty }=\{c_{\infty }\}$, so we can define the  quotient $K$-coloring $\wh{c}([p]_{G}):=c_{\infty }(p)$. Given a compact subset $F$ of 
$M$, let $g\in G$ be such that $\max_{p\in F}d_{K}(c_{\infty }(p),c(g\cdot
p))<\varepsilon$. If $x\in F$, then $d_{K}(c(g\cdot x),\wh{c}([g\cdot
x]_{G}))=d_{K}(c(g\cdot x),c_{\infty }(x))<\varepsilon$.
\end{proof}

We apply this characterization to groups of linear isometries of a Banach space. 
Given two Banach spaces $X$ and $Y$, recall that   $\mc L(X,Y)$ is  the Banach space of all bounded linear operators $T:X\to Y$, endowed with the operator norm $\nrm[T]:=\sup_{\nrm[x]_X=1} \nrm[Tx]_Y$, and when  $\mathrm{Im}(T)$ of $T$ is  finite-dimensional,  $\nrm[T^{-1}]:=\min\conj{a\ge 0}{ \mr{Ball}({TX})\con a T(\mr{Ball}(X)) }$.
 In this case, $\nrm[T^*]=\nrm[T]$ and $\nrm[(T^*)^{-1}]=\nrm[T^{-1}]$.
The special case when $T:X\to Y$ is 1-1 and $\nrm[T]=\nrm[T^{-1}]=1$ corresponds to $T$ being an isometric embedding. The collection of such maps is denoted by $\mr{Emb}(X,Y)$. 
Let $\mc L_{\la}(X,Y)$, $\mc L_{\sma \la}(X,Y)$,  be  the set of all $T\in \mc L(X,Y)$ with finite dimensional image   such that $\nrm[T],\nrm[T^{-1}]\le \la$, resp. $<\la$. Let  
 $\mc L^k(X,Y)$ be the set of all  $T\in \mc L(X,Y)$  whose image is $k$-dimensional, and let   $\mc L_{\lambda }^k(X,Y)=\mc L_{\lambda }(X,Y)\cap \mc L^k(X,Y)$, $\mc L_{\sma\lambda }^k(X,Y)=\mc L_{\sma\lambda }(X,Y)\cap \mc L^k(X,Y)$. Let $\mc L^{k, \mr{w}^*}(X^*,X)$ be the metric space  of operators $T\in \mc L^k(X^*,X)$ such that $T$ admits a full rank  decomposition, i.e., when $T=T_0\circ T^*_1$ for some $T_0, T_1\in \mc L^k(\mbb F^k, X)$. It is an exercise to prove that this is equivalent to saying that  $T$ is a $\mr{w}^*$-to-norm continuous linear   operators from $X^*$ to $X$  of rank $k$; let   $\mc L^{k, \mr{w}^*}_\la(X^*,X):= \mc L^{k, \mr{w}^*}(X^*,X)\cap  \mc L_\la(X^*,X)$.
 \begin{definition}
Let $\iso(E)\acts \mc L(X,E)$ be the canonical action by isometries $g\cdot T:= g\circ T$,
$\iso(E) ^2 \acts \mc L^{k,\mr{w}^*}(E^*,E)$ be the canonical action by isometries $(g,h)\cdot T:= g \circ T \circ h^*$ for   $(g,h)\in \iso(E)^2$ and $T\in \mc L^{k,\mr{w}^*}(E^*,E)$, and let   $\iso(E)\acts \mr{Gr}(k,E)$ be the canonical action by  isometries $g \cdot V:= g(V)$.
   
 \end{definition}  
Note  that  $\mc L_{\lambda }^k(X,E)$,   $\mc L_{\sma \lambda }^k(X,E)$, and  $\mc L^{k, \mr{w}^*}_\la(X^*,X)$, $\mc L^{k, \mr{w}^*}_{\sma\la}(X^*,X)$ are $\iso(E)$-closed and   $\iso(E)^2$-closed, respectively. The next readily follows from Proposition \ref{factor_orbitspace0}, using the fact that if $G$ is extremely amenable, then $G^2$ is also extremely amenable (see \cite[Corollary 6.2.10.]{pestov_dynamics_2006}).

\begin{lemma}
\label{oi34ioioirffvfgg} Suppose that $X,E$ are Banach spaces, $X$ is finite-dimensional, and  $\iso(E)$ is extremely amenable. Let $k\in \N $,  $\vep>0$, $1\le\la$,      $Y\in \age(E)_\equiv$, and   let $(K,d_K)$ be  a compact metric space.   
\begin{enumerate}[a)]
\item For every $K$-coloring  $c$ of   $(\mathcal{L}_{\sma\la}^k(X,E),d_{X,E})$   there is  $R\in \mathrm{Emb}(Y,E)$
 such that   the quotient map $\pi: \mc L_{\sma\la}^{k}(X,E)\to \mc L_{\sma\la}^k(X,E)\quo \iso(E)$ is an $\vep$-factor of  $c$ in $R \circ  \mc{L}_{\sma\la}^k(X, Y)$.
\item For every $K$-coloring  $c$ of $(\gr(k,E), \La_E)$ there exists $V\in \gr(\dim Y, E) $
 with $(V,\nrm_E)$ is isometric to $Y$  such that  the quotient map $\pi: \gr(k,E)\to \gr(k,E)\quo \iso(E)$  is an $\vep$-factor of  $c$ in $\gr(k,V)$.
 \item For every $K$-coloring  $c$ of $(\mc L^{k,\mr{w}^*}_{\sma\la}(E^*,E), d_{E^*,E})$ there   are  $R_0,R_1\in \Emb(Y,E)$ such that
    the quotient map $\pi:\mc L^{k,\mr{w}^*}_{\sma\la}(E^*,E)\to\mc L^{k,\mr{w}^*}_{\sma\la}(E^*,E)\quo \iso(E)^2$ is an $\vep$-factor of  $c$ in $R_0 \circ \mc L^{k,\mr{w}^*}_{\sma\la}(Y^*,Y)\circ R_1^*$.
\qed
 \end{enumerate}

\end{lemma}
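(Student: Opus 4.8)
The plan is to deduce all three statements from the characterization of extreme amenability in Proposition~\ref{factor_orbitspace0}, item \textit{iv)}, applied to the relevant isometric action: for \textit{a)} and \textit{b)} the acting group is $\iso(E)$, while for \textit{c)} it is $\iso(E)^2$, which is extremely amenable because $\iso(E)$ is (\cite[Corollary~6.2.10]{pestov_dynamics_2006}). Since $Y\in\age(E)_\equiv$ there is a linear isometry $\iota\colon Y\to E$, which I would fix once and for all. The one point requiring care, and the main obstacle, is that Proposition~\ref{factor_orbitspace0}~\textit{iv)} produces, for a prescribed \emph{compact} set $F$, a group element $g$ chosen \emph{after} $F$, whereas the conclusion we want is about a set $R\circ(\text{domain of }S)$ with $R$ part of the output and that domain only relatively compact (the constraint $\nrm[T^{-1}]<\la$ is open). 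I would reconcile this by taking $F$ of the form $\iota\circ\overline{(\text{domain})}$ and extending $c$ by uniform continuity before applying the proposition.

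For \textit{a)}: first, $S\mapsto\iota\circ S$ maps $\mc L^k_{\sma\la}(X,Y)$ into $\mc L^k_{\sma\la}(X,E)$ (being an isometric embedding, $\iota$ preserves rank, $\nrm[\iota S]=\nrm[S]$, and $\nrm[(\iota S)^{-1}]=\nrm[S^{-1}]$). I would set $F$ to be the closure of $\iota\circ\overline{\mc L^k_{\sma\la}(X,Y)}$, where the inner closure is taken in the finite-dimensional space $\mc L(X,Y)$; then $F$ is a compact subset of $\overline{\mc L^k_{\sma\la}(X,E)}$ (it is bounded and lies in the finite-dimensional space $\mc L(X,\iota(Y))$), and $c$ extends uniquely to a $1$-Lipschitz $\overline c\colon\overline{\mc L^k_{\sma\la}(X,E)}\to K$. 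The canonical action $\iso(E)\acts\overline{\mc L^k_{\sma\la}(X,E)}$ is continuous and by isometries, so Proposition~\ref{factor_orbitspace0}~\textit{iv)} yields a compact colouring $\wh{\overline c}$ on $\overline{\mc L^k_{\sma\la}(X,E)}\quo\iso(E)$ and some $g\in\iso(E)$ with $d_K(\overline c(T),\wh{\overline c}([T]_{\iso(E)}))<\vep$ for every $T\in g\cdot F$. I would then put $R:=g\circ\iota\in\Emb(Y,E)$, observe that the inclusion $\mc L^k_{\sma\la}(X,E)\hookrightarrow\overline{\mc L^k_{\sma\la}(X,E)}$ is an $\iso(E)$-equivariant isometry and hence descends to a $1$-Lipschitz map $j$ of orbit spaces, and set $\wt c:=\wh{\overline c}\circ j$, a colouring of $\mc L^k_{\sma\la}(X,E)\quo\iso(E)$. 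For every $S\in\mc L^k_{\sma\la}(X,Y)$ one has $R\circ S=g\cdot(\iota\circ S)\in g\cdot F$, whence $d_K(c(R\circ S),\wt c(\pi(R\circ S)))=d_K(\overline c(R\circ S),\wh{\overline c}([R\circ S]_{\iso(E)}))<\vep$, which is exactly the assertion.

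Part \textit{b)} is the clean case, needing no closures: $\gr(k,\iota(Y))$ is a classical Grassmannian of a finite-dimensional space, hence $\La_E$-compact, and $\gr(k,\iota(Y))\con\gr(k,E)$. I would apply Proposition~\ref{factor_orbitspace0}~\textit{iv)} to $\iso(E)\acts(\gr(k,E),\La_E)$, the colouring $c$, and $F:=\gr(k,\iota(Y))$, obtaining $\wh c$ on $\gr(k,E)\quo\iso(E)$ and $g\in\iso(E)$ with $d_K(c(W),\wh c([W]_{\iso(E)}))<\vep$ for all $W\in g\cdot\gr(k,\iota(Y))=\gr(k,g(\iota(Y)))$; then $V:=g(\iota(Y))\in\gr(\dim Y,E)$ has $(V,\nrm_E)$ isometric to $Y$, and with $\wt c:=\wh c$ and $\gr(k,V)\con g\cdot\gr(k,\iota(Y))$ the claim follows.

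For \textit{c)} I would rerun the argument of \textit{a)} with $G:=\iso(E)^2$ acting by $(g,h)\cdot T=g\circ T\circ h^*$. The adjoint $\iota^*\colon E^*\to Y^*$ is a metric surjection, so $\iota^*(\Ball(E^*))=\Ball(Y^*)$; from this, $S\mapsto\iota\circ S\circ\iota^*$ maps $\mc L^{k,\mr{w}^*}_{\sma\la}(Y^*,Y)$ into $\mc L^{k,\mr{w}^*}_{\sma\la}(E^*,E)$ (rank and $\mr{w}^*$-to-norm continuity are preserved, $\nrm[\iota S\iota^*]\le\nrm[S]$, and $\nrm[(\iota S\iota^*)^{-1}]=\nrm[S^{-1}]$ by the computation of \textit{a)} together with $\iota^*(\Ball(E^*))=\Ball(Y^*)$). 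This map being $1$-Lipschitz, the closure $F$ of the image of $\mc L^{k,\mr{w}^*}_{\sma\la}(Y^*,Y)$ under it is a compact subset of $\overline{\mc L^{k,\mr{w}^*}_{\sma\la}(E^*,E)}$; I would extend $c$ to a $1$-Lipschitz $\overline c$ there and apply Proposition~\ref{factor_orbitspace0}~\textit{iv)} to $\iso(E)^2\acts\overline{\mc L^{k,\mr{w}^*}_{\sma\la}(E^*,E)}$, $\overline c$ and $F$, obtaining $\wh{\overline c}$ and $(g,h)\in\iso(E)^2$ as before. Setting $R_0:=g\circ\iota$ and $R_1:=h\circ\iota$, both in $\Emb(Y,E)$, one has $R_0\circ S\circ R_1^*=(g,h)\cdot(\iota\circ S\circ\iota^*)\in(g,h)\cdot F$ for $S\in\mc L^{k,\mr{w}^*}_{\sma\la}(Y^*,Y)$, and the colouring $\wt c$ obtained, exactly as in \textit{a)}, by pulling $\wh{\overline c}$ back along the inclusion-induced $1$-Lipschitz map of orbit spaces, satisfies $d_K(c(R_0\circ S\circ R_1^*),\wt c(\pi(R_0\circ S\circ R_1^*)))<\vep$, as required. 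The only delicate step throughout, as noted, is this passage through closures; the substance is entirely carried by the extreme amenability of $\iso(E)$ and of $\iso(E)^2$.
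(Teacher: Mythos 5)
Your argument is correct and is exactly the route the paper intends: Lemma~\ref{oi34ioioirffvfgg} is stated there as an immediate consequence of Proposition~\ref{factor_orbitspace0}~\emph{iv)} applied to the canonical actions of $\iso(E)$ (and of $\iso(E)^2$, extremely amenable by \cite[Corollary 6.2.10]{pestov_dynamics_2006}), with $R$, $V$, $R_0,R_1$ obtained by composing the group element with a fixed isometric copy of $Y$ in $E$. Your extra care in passing to closures and extending $c$ by uniform continuity, so that the set fed to \emph{iv)} is genuinely compact, is a detail the paper glosses over, and you handle it correctly.
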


The relationship between the \sarpp  of a clas  of finite dimensional normed spaces and the extreme amenability of the isometry group of its Fraïssé limit  is the next  mix of the Fraïssé and the Kechris-Pestov-Todorcevic correspondences, that we took from \cite[Corollary 5.11]{ferenczi_amalgamation_2019}.

\teor \label{43iooi4355} 
If   $\mc G$ is an hereditary family with the \sarpp, then  the Banach-Mazur closure  of $\mc G$ also has the \sarpp and   the Fraïssé limit $\flim \mc G$ is a Fraïssé Banach space whose isometry group is  extremely amenable  with its strong operator topology. 
  \qed
  \fteor
\begin{definition}
 Given a normed space $E:=(\mbb F^\infty, \mtt m)$ such that $\age(E)$ is an amalgamation class, we write  $\widehat{E}$ to denote, the Fraïssé limit $\flim \age(E)$.  
\end{definition}

 We will use the following notation. Given a normed space $E=(\mbb F^\infty,\mtt m)$ and $n\in \N$, we set $E_n:=(\langle u_j\rangle_{j<n},\mtt m)$, and given a normed space $X$ we write $\age(X)_\mtt m$ to denote the collection of subspaces of $X$ isometric to some $E_n$. The following is the asymptotic version of Lemma \ref{oi34ioioirffvfgg}.

\begin{corollary}
\label{finitiz1}
Suppose that  $E=(\mbb F^\infty, \mtt m)$ is such that $\age(E)$ has the \sarpp,  and  $\age(\wh E)_\mtt m$ is an amalgamation class.   Let   $k,m\in \mathbb{N}$,   $\vep>0$ and $1<\la$.      Given a  compact metric space $(K,d_{K})$   there is $X\in \age(\wh E)_\mtt m$   such that  
  
\begin{enumerate}[1)]
\item for every $K$-coloring  $c$ of   $\mathcal{L}_{\la}^k(E_k,X)$   there is   $R\in \mathrm{Emb}(E_m,X)$
 such that   the quotient map $\pi: \mc L_{\sma\la}^k(E_k,\wh E)\to \mc L_{\sma\la}^k(E_k,\wh E)\quo \iso(\wh E)$ is an $\vep$-factor of  $c$ in $R \circ  \mc{L}_{\sma\la}^k(E_k,  E_m)$;

\item     for every $K$-coloring  $c$  of $(\gr(k,X), \La_{X})$ there  is $V\in \gr(m, X)\cap \age(\wh E)_\mtt m$ such that  the quotient map $\pi: \gr(k,\wh E)\to \gr(k,\wh E)\quo \iso(\wh E)$  is an $\vep$-factor of  $c$ in $\gr(k,V)$;
 \item   for every $K$-coloring  $c$ of $\mc L^{k,\mr{w}^*}_{\la}(X^*,X)$ there   are  $R_0,R_1\in \Emb(E_m,X)$ such that
    the quotient map $\pi:\mc L^{k,\mr{w}^*}_{\sma \la}(\wh E^*,\wh E)\to\mc L^{k,\mr{w}^*}_{ \sma\la}(\wh E^*,\wh E)\quo \iso(\wh E)^2$ is an $\vep$-factor of  $c$ in $R_0 \circ \mc L^{k,\mr{w}^*}_{ \sma\la}(E_m^*,E_m)\circ R_1^*$.

 \end{enumerate}

 \end{corollary}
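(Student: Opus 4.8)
The plan is to deduce all three statements, uniformly, by \emph{finitising} the infinitary factorisation of Lemma~\ref{oi34ioioirffvfgg}: the finite witnesses will be produced by an iterated \sarpp argument, while the extremely amenable group $G=\iso(\wh E)$ and its orbit quotients will supply both the target of the factorisation and the $1$‑Lipschitz factoring map. So I begin by fixing the ambient objects. Replacing $\age(E)$ by the hereditary Banach--Mazur--closed family $\mc F$ that is $\equiv$‑equivalent to it (which still has the \sarpp), Theorem~\ref{43iooi4355} gives that $\wh E=\flim\mc F$ is Fra\"iss\'e, that $\age(\wh E)\equiv\mc F$, and that $G:=\iso(\wh E)$, hence also $G^2$, is extremely amenable in the strong operator topology. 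Since $\age(\wh E)_\mtt m$ is an amalgamation class, every $E_n$ embeds isometrically into $\wh E$, so in particular $E_m\in\age(\wh E)_\equiv$. Because $\wh E$ is Fra\"iss\'e, two $k$‑dimensional subspaces lie in the same $G$‑orbit closure exactly when they are linearly isometric, whence $\gr(k,\wh E)\quo G$ is canonically identified, as a metric space, with the compactum $\mc B_k(\wh E)$, the quotient map $\pi$ being $1$‑Lipschitz; likewise $\mc L^k_{\sma\la}(E_k,\wh E)\quo G$ and $\mc L^{k,\mr{w}^*}_{\sma\la}(\wh E^*,\wh E)\quo G^2$ are (completions of) $\mc N_k(\wh E;\sma\la)$ and $\mc D_k(\wh E;\sma\la)$.

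The core step is the finitisation, which I carry out for statement~(2); the cases (1) and (3) are parallel, replacing $\gr(k,\cdot)$ by $\mc L^k_\la(E_k,\cdot)$ respectively $\mc L^{k,\mr{w}^*}_\la(\cdot^*,\cdot)$, using $G^2$ in the last case, and noting that a coloring of a closed ball restricts to one of the corresponding open ball. As $\gr(k,\wh E)\quo G\cong\mc B_k(\wh E)$ is compact, I fix a finite $\eta$‑net of it, realised by spaces $X^{(1)},\dots,X^{(p)}\in\mc F_\equiv$ of dimension $k$, with $\eta$ small enough relative to the \sarpp constant $\de=\de(k,\vep)$ that an $\eta$‑perturbation of a type embeds $\de$‑isometrically into it. Iterating the \sarpp $p$ times by the standard sequential composition of a Ramsey property, I obtain a single $Z\in\mc F=\age(\wh E)_\mtt m$ containing an isometric copy of $E_m$, with the property that every compact coloring of $\bigsqcup_{i\le p}\Emb_\de(X^{(i)},Z)$ is $\vep$‑stable, simultaneously for all $i$, on $\rho\circ\Emb_\de(X^{(i)},E_m)$ for a single common $\rho\in\Emb(E_m,Z)$. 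I then claim $X:=Z$ works. Given a $K$‑coloring $c$ of $\gr(k,Z)$, I apply the above to the coloring $e\mapsto c(\mr{Im}\,e)$ (which is compact once the metrics on the $\Emb_\de(X^{(i)},Z)$ are rescaled so that $e\mapsto\mr{Im}\,e$ is $1$‑Lipschitz), obtaining $\rho$, and I set $V:=\rho(E_m)\in\gr(m,Z)\cap\age(\wh E)_\mtt m$. Every $W\in\gr(k,V)$ equals $\mr{Im}\,e$ for some $e\in\rho\circ\Emb_\de(X^{(i)},E_m)$ with $X^{(i)}$ the net point nearest the isometry type of $W$, so $c$ oscillates by at most $\vep$ over every fibre of $\pi\rest_{\gr(k,V)}$; equivalently $c\rest_{\gr(k,V)}$ is within $\vep$ of a map that is constant on $G$‑orbits.

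It remains to promote this to an honest $\vep'$‑factor, i.e.\ to a $1$‑Lipschitz coloring $\wt c\colon\gr(k,\wh E)\quo G\to K$ with $d_K(c(W),\wt c(\pi(W)))\le\vep'$ for $W\in\gr(k,V)$. For this I would use Proposition~\ref{factor_orbitspace0}: extend $c\rest_{\gr(k,Z)}$ to a $1$‑Lipschitz $K$‑valued coloring $\bar c$ of $\gr(k,\wh E)$, take $c_\infty$ a $G$‑fixed point in the closure of the $G$‑orbit of $\bar c$ in $\mathrm{Lip}((\gr(k,\wh E),\La_{\wh E}),K)$, set $\wt c([p]_{G}):=c_\infty(p)$ (automatically $1$‑Lipschitz and constant on orbit closures, hence well defined), and invoke part~(iv) of the Proposition with the compact set $\gr(k,V)$ to get $g\in G$ with $d_K(c(gW),\wt c(\pi(W)))\le\vep'$ for $W\in\gr(k,V)$; then $g(V)$ is again an isometric copy of $E_m$, and one reabsorbs its (small) displacement into $Z$ through the Ramsey bookkeeping, or simply re‑runs this last step with an enlarged compact test set. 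I expect the main obstacle to be exactly this reconciliation: matching the \emph{finite} Ramsey witness $V\subseteq Z$ with the \emph{infinite‑dimensional} orbit quotient $\gr(k,\wh E)\quo G$ (resp.\ $\mc L^k_{\sma\la}(E_k,\wh E)\quo G$, $\mc L^{k,\mr{w}^*}_{\sma\la}(\wh E^*,\wh E)\quo G^2$) while keeping the descended coloring $1$‑Lipschitz --- this is where extreme amenability of $G$ and $G^2$ is genuinely needed, through Proposition~\ref{factor_orbitspace0}, and where one must be careful about the $1$‑Lipschitz $K$‑valued extension to $\wh E$ and about the smallness of the displacement of $V$. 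Everything else --- compactness of $\gr(k,Z)$, $\mc L^k_\la(E_k,Z)$, $\mc L^{k,\mr{w}^*}_\la(Z^*,Z)$ and of the attendant spaces of $K$‑colorings for finite‑dimensional $Z$, the sequential composition of the \sarpp, and the identification of the orbit quotients with $\mc B_k(\wh E)$, $\mc N_k(\wh E)$, $\mc D_k(\wh E)$ --- is routine.
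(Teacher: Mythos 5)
Your setup (Theorem \ref{43iooi4355} giving extreme amenability of $\iso(\wh E)$, the identification of the orbit quotients with $\mc N_k(\wh E;\sma\la)$, $\mc B_k(\wh E)$, $\mc D_k(\wh E;\sma\la)$, and the iterated \sarpp over a finite net of the compact quotient) is fine as far as it goes, but it only yields that the given coloring $c$ has oscillation $\le\vep$ on each approximate fibre of $\pi$ inside $\gr(k,V)$; it does not produce what the corollary demands, namely a single $1$-Lipschitz $\wt c$ defined on all of $\gr(k,\wh E)\quo\iso(\wh E)$ with $d_K(c(W),\wt c(\pi(W)))\le\vep$ for $W\in\gr(k,V)$. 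Near-constancy on fibres over subspaces of $V$ gives no Lipschitz control across fibres with respect to the quotient of $\La_{\wh E}$ (the infimum there is over repositionings by isometries of all of $\wh E$, not of $V$), so the ``promotion'' step is the actual content, and you delegate it to Proposition \ref{factor_orbitspace0}. That application has two concrete problems. First, it needs a $1$-Lipschitz $K$-valued extension $\bar c$ of $c$ from $\gr(k,X)$ to $\gr(k,\wh E)$, which does not exist for a general compact target $K$ (and enlarging the target no longer gives an $\vep$-factor into $K$ in the sense of the paper). Second, and decisively, Proposition \ref{factor_orbitspace0} only provides $g\in\iso(\wh E)$ such that the approximation holds at the points of $g\cdot\gr(k,V)$: the witness has been moved by an uncontrolled isometry of the infinite-dimensional space, so it no longer lies inside your finite-dimensional $X$, and on $g\cdot\gr(k,V)$ the values compared with $\wt c$ are those of the artificial extension $\bar c$, not of the original $c$. ``Reabsorbing the displacement'' or ``re-running with a larger compact test set'' cannot repair this, because extreme amenability gives no bound whatsoever on $g$; this finite/infinite mismatch is exactly the non-trivial point, and it is left open in your proposal.

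The paper resolves it by reversing the order of quantifiers through a compactness argument: assuming no $X\in\age(\wh E)_\mtt m$ works, it assembles the counterexample colorings $(c_Y)_Y$ into one limit coloring $c$ on $\mc L^k_{\sma\la}(E_k,\wh E)$ (resp.\ $\gr(k,\wh E)$, $\mc L^{k,\mr{w}^*}_{\sma\la}(\wh E^*,\wh E)$) along an ultrafilter on $\age(\wh E)_\mtt m$, applies the infinitary Lemma \ref{oi34ioioirffvfgg} to factor $c$ through the orbit quotient via some $S\in\Emb(E_m,\wh E)$, and only then returns to a finite space: the hypothesis that $\age(\wh E)_\mtt m$ is an amalgamation class yields, $\mc U$-often, a copy $S_Y\in\Emb(E_m,Y)$ with $\nrm[S_Y-S]$ small, and precompactness of $\mc L^k_{\sma\la}(E_k,E_m)$ makes $c_Y$ close to $c$ on the relevant set, contradicting that $c_Y$ was a counterexample. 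Thus the finite witness is chosen \emph{after} the extreme-amenability step and depends on its output, which is precisely what your argument cannot arrange. (Note also that your iterated-\sarpp net argument becomes redundant once Proposition \ref{factor_orbitspace0} is invoked: in the paper the \sarpp enters only through Theorem \ref{43iooi4355}.)
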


\begin{proof}   
For each $X\in \age(\wh E)$, and each $\vep>0$, let $A_{X,\vep}$ be the collection of all $Y\in \age(\wh E)_\mtt m$ such that $X\con_\vep Y$.  Then $\{A_{X,\vep}\}_{A,\vep}$ is a family of subsets of $\age(\wh E)_\mtt m$ with the finite intersection property. Let $\mc U$ be a non principal ultrafilter on $\age(\wh E)_\mtt m$ containing all $A_{X,\vep}$. 
Now suppose   for the sake of contradiction, that, for some compact
space $(K,d_{K})$, there is no such $X\in \age(\wh E)_\mtt m$ satisfying {\it1)}, {\it2)} or {\it3)}. Since $\mc U$ is an ultrafilter there is $j=1,2,3$ such that the set $B_j:=\conj{X\in \age(\wh E)_\mtt m}{ X\text{ does not satisfy {\it j)}}}$ belongs to $\mc U$. Suppose that $j=1$. For each $X\in B_1$ there exists a  coloring $c_{X}:\mathcal{L}_{\la}^k(E_k,X)\rightarrow K$ providing a counterexample.  For each $T\in \mathcal{L}_{\sma\la}^k(E_k ,\wh E)$, let $c(T)\in K$ be defined as follows.   We say that $c(T)=x\in K$ if and only if for every $\vep>0$  one has that  
$\conj{Y\in C_{T,\vep}}{ \text{ such that  $T\in (\mc L_{\la}^k(E_k, Y))_\vep$ and    $d_K((T)_\vep\cap \mc L_{\la}^k(E_k, Y), x)\le \vep$}}\in\mc U.$
This is well defined   because  $K$ is compact and     the set of $Y\in  B_1$ such that   $T\in (\mc L_{\la}^k(E_k, Y))_\vep$  belongs to $\mc U$.  It is easy to see that $c$ defines a coloring, i.e., that $c$ is 1-Lipschitz. Let $\pi:\mc L_{\sma\la}^k(E_k,\wh E)\to\mc L_{\sma\la}^k(E_k,\wh E)\quo \iso(\wh E)$ be the quotient mapping.  By Lemma \ref{oi34ioioirffvfgg} there exist $S\in \mathrm{Emb}(E_m,\wh E)$ and a  coloring $\widehat{c}:(\mc L_{\sma\la}^k(E_k,\wh E)\quo \iso(\wh E), \widehat{d})\rightarrow (K,d_{K})$ such that  $d_{K}(c(S\circ T),\widehat{c}(\pi(T))\leq  \vep/2 \text{ for every $T\in \mathcal{L}_{\sma\la}^k(E_k,E_m)$}$.
Since $\age(\wh E)_\mtt m$ is an amalgamation class, the set $C$ of $Y\in \age(\wh E)_\mtt m$ such that 
there is $S_Y\in \Emb(E_m, Y)$ such that $\nrm[S_Y- S]\le \vep/3$ belongs to $\mc U$,  and since  
  $\mc L_{\sma\la}^k(E_k,E_m)$ is pre-compact, the set $D$ of those $Y\in C$ such that  
  $\max\conj{d_K(c_Y(S_Y\circ T), c(S\circ T))}{T\in \mc L_{\sma\la}^k(E_k,E_m)}\le \vep/2$ also belongs to $\mc U$. Then $Y\in D$, $S_Y$ and $\widehat{c}$ contradicts the assumption that $c_Y$ is a counterexample. Fix $T\in \mc L_{\sma\la}^k(E_k,E_m)$.  It follows that 
\begin{align*} \pushQED{\qed}
d_{K}(\widehat{c}(\pi(  T)),c_{Y}(S_{Y}\circ T)) = & d_{K}(\widehat{c}(\pi(S_{Y}\circ T)),c_{Y}(S_{Y}\circ T))\leq  d_{K}(\widehat{c}(\pi(S\circ T)),c(S\circ
T))+ \\
&+  d_{K}(c_{Y}(S_{Y}\circ T),c(S\circ T))\leq  \vep
\end{align*}
The cases $j=2,3$ are proved similarly, so we leave the details to the reader. 
 \fprue

\subsection{Orbit spaces for Fraïssé Banach spaces}\label{892sdfwe}
We see that the orbit spaces considered in Corollary \ref{finitiz1}  {\it1}, {\it2}, and {\it3}, are homeomorphic to $\mc N_k(\wh{E})$, $\mc B_k(\wh E)$ and $\mc D_k(\wh E)$, respectively.
We also show that the $\wh E$-extrinsic  metrics extend the corresponding $E$-extrinsic ones, finishing  the proof of Theorem \ref{lk3io2ior4o23i32} {\it a)}.

\teor\label{Canonical_orbit_metrics}
Suppose   $E=(\mbb F^\infty,\nrm_E)$ is such that $\age(E)$ is an amalgamation class. Then,  
 \begin{enumerate}[a)]
\item $\partial_{X,\wh E}$ is a compatible metric on $\mc N_X(\wh E)$ that is uniformly equivalent to $\om$ on $\om$-bounded sets, 
  $\partial_{X, E}=\partial_{X,\wh E}$ on $\mc N_X(E)$, and  $\mc N_X(E)$ is dense in $\mc N_X(\wh E)$ for every $X$.

 \item   $\ga_{k,\wh E}$ is a compatible metric on $\mc B_k(\wh E)$,
   $\ga_{k,\wh E}=\ga_{k,E}$ on $\mc B_k(E)$, and $\mc B_k(E)$ is dense in $\mc B_k(\wh E)$.
 \item  $\mk d_{k,\wh E}$  is a compatible metric on $\mc D_k(\wh E)$ that is uniformly equivalent to $\wt\om_2$ on  $\wt\om_2$-bounded sets, 
  $\mk d_{k, E} = \mk d_{k,\wh E} $ on  $  \mc D_k(E)$, and $\mc D_k(E)$ is dense in $\mc D_k(E)$. 
 \end{enumerate}
\fteor

\begin{proof}

 {\it a)}: Suppose that $\dim X=k$. Recall that we consider $\mc N_X$ with its natural topology of pointwise convergence.   The mapping
 $\nu_{X,\wh E}: \mc L^k(X,\wh E)\to \mc N_X(\wh E)$ is continuous, because    the  convergence in norm implies pointwise converge.  We see  that  $\nu_{X,\wh E}(T)=\nu_{X,\wh E}(U)$ if and only if $[T]=[U]$. The reverse implication is clear; now suppose that   $\nu_{X,\wh E}(T)=\nu_{X,\wh E}(U)$. Let $Y:=T(X)$ be endowed with the $\wh E$-norm, and let $\theta: Y\to X$ be the inverse isometry of $T:X\to Y$. Then $U\circ \theta\in \Emb(Y,E)$; so, given $\vep>0$, there is a global isometry $\al$ of $E$ such that $\nrm[U\circ \theta-\al\rest Y]\le \vep$, or equivalently, $\nrm[U-\al \circ T]\le \vep$. Since $\vep$ is arbitrary, we obtain that $U\in [T]$.  We   show that $\widetilde{\nu}_{X,\wh E}$ is a homeomorphism. Suppose that $(\mtt m_j)_j$ is a converging sequence in $\mc N_X(\wh E)$ with limit $\mtt m\in \mc N_X(\wh E)$.   For each $j$, let $T_j\in \mc L^k(X,\wh E)$ be such that $\nu_{X,\wh E}(T_j)=\mtt m_j$, and let $T\in \mc L^k(X,\wh E)$ be such that $\nu_{X,\wh E}(T)=\mtt m$.
\clam
$([T_j])_j$ is a Cauchy sequence.
\fclam
Notice that it follows from this, and the fact that  the quotient metric $\widetilde{d}_{X,\wh E}$ is complete (here we use that $X$ and $\wh E$ are Banach spaces), that $([T_j])_j$ converges to some $[U]$; by the continuity of $\nu_{X,\wh E}$ we have that $\nu_{X,\wh E}(U)=\mtt m=\nu_{X,\wh E}(T)$, so $([T_j])_j$ converges to $[U]=[T]$.   Also, given a bounded subset $A$ of $\mc N_X(\wh E)$, its closure  $\overline{A}$ is compact, so $\partial_{X,\wh E}$ and $\om$ are uniformly equivalent on $\overline{A}$, thus also on $A$.

Let us prove the previous claim:   Set $Y:=T(X)$, normed as subspace of $\wh E$, and let $\theta:Y\to X$ be the inverse isometry of $T:X\to Y$,  and fix $\vep>0$; since $\wh E$ is  weak-Fraïssé,   there is some $\de>0$ such that  the canonical action  $\iso(\wh E)\acts \Emb_\de(Y,\wh E)$ is $\vep$-transitive;  let $j_0$ be such that $T_j \circ \theta \in \mr{Emb}_\de(Y, \wh E)$ for every $j\ge j_0$; this means that if $j_1,j_2\ge j_0$, then there is $\al \in \iso(\wh E)$ such that 
$\nrm[T_{j_1}  -\al  \circ T_{j_2}  ]   =\nrm[T_{j_1} \circ \theta -\al  \circ T_{j_2} \circ \theta ]\le \vep,$ 
hence $\widetilde{d}_{X,\wh E}([T_{j_1}],[T_{j_2}])\le \vep$.

Let us prove now that $\partial_{X, E}(\mtt m,\mtt n)=\partial_{X,\wh E}(\mtt m,\mtt n)$:  Since $E$ is isometrically embedded into $\wh E$, we have that  $\partial_{X, \wh E}(\mtt m,\mtt n)\le \partial_{X,E}(\mtt m,\mtt n)$. Now suppose that $\nu_{X,\wh E}(T)=\mtt m$, $\nu_{X,\wh E}(U)=\mtt n$, and $\vep>0$. To simplify the notation, let $X_\mtt m:=(X,\mtt m)$ and $X_\mtt n:=(X,\mtt n)$. 
 We use the fact that  $\age(E)$ is an amalgamation class to find $\de>0$ such that for every $Y,Z,V$ that can be isometrically embedded into $E$, with $\dim Y=k$, and every $\ga\in \Emb_\de(Y,Z)$, $\eta\in \Emb_\de(Y,V)$ there is $W$ that can be isometrically embedded into $E$ and $i\in \Emb(Z,W)$, $j\in \Emb(V,W)$ such that $\nrm[i\circ \ga -j\circ \eta]\le \vep$.   Since $\age(E)_{\wh E}$ is $\La_{\wh E}$-dense in $\age(\wh E)$, we can find $Z\in  \age(E)_{\wh E}$ such that there is $\theta \in \Emb_{\de}( Y, Z  )$ such that $\nrm[\theta - i_{Y, \wh E}]\le \vep$, where $Y:=\im T +\im U$.  Then $T_0:=\theta\circ T\in \Emb_\de( X_\mtt m, Z)$, $U_0:=\theta\circ U\in \Emb_\de(X_\mtt n, Z)$  and setting $K_\mtt m:= \nrm[\id_X]_{X, X_\mtt m}$, $K_\mtt n:= \nrm[\id_X]_{X, X_\mtt n}$, 
\begin{equation*}\label{kjehjehe}
\nrm[T_0-U_0]_{X,Z}\le   \nrm[T-T_0]_{X_\mtt m, \wh E} K_\mtt m+ \nrm[T-U]_{X,\wh E} +\nrm[U-U_0]_{X_\mtt n, \wh E} K_\mtt n \le \vep (K_\mtt m+K_\mtt n)    +  \nrm[T-U]_{X,\wh E}.
\end{equation*}
We use now that $\age(E)$ has the amalgamation property to find $V\in \age(E)_{\wh E}$ and $I\in \Emb(Z,V)$, $T_1\in \Emb((X,\mtt m),V)$ and $T_1\in \Emb((X,\mtt n), V)$ such that $\nrm[T_1- I\circ T_0]_{X_\mtt m, V},\nrm[U_1- I\circ U_0]_{X_\mtt n, V}\le \vep$.    
Thus,
 \begin{align*}
\nrm[T_1-U_1]_{X,V}= &\nrm[T_1-U_1]_{X,\wh E}\le  \nrm[T_1- I\circ T_0]_{X_\mtt m,  \wh E} K_\mtt m + \nrm[I\circ T_0-I\circ T_1]_{X,\wh E} + \\ 
+& 
 \nrm[U_1- I\circ U_0]_{X_\mtt m,  \wh E} K_\mtt n  
\le \vep (2+K_\mtt m +K_\mtt n)    +  \nrm[T-U]_{X,\wh E} .
\end{align*}
 Since $V\in \age(E)_{\wh E}$, and $\vep>0$ is arbitrary,  we obtain that $\partial_{X,E}(\mtt m,\mtt n)\le \nrm[T-U]_{X,\wh E}$.  Finally, because $\age(E)_{\wh E}$ is $\La_{\wh E}$-dense in $\age(\wh E)$, it follows that  $\bigcup_{Y\in \age(E)_{\wh E}}\mc L^k(X,Y)$ is dense in $\mc L^k(X,Y)$, so, $\mc N_X(E)$ is dense in $\mc N_X(\wh E)$.

{\it b)}:  $\tau_{k, \wh E}$ is continuous:  suppose that $V_n\to V$ for $n\to \infty$ in $\mr{Gr}(k,E)$  in the opening metric $\La_E$. Let $T\in \mc L^ k(\mbb F^k ,E)$ be such that $\im T=V$, and for each $i<k$ and  $n$ choose $x_i^{n}\in B_{(V_n,\nrm_E)}$ such that $\nrm[x_i^ n-T(u_i)]_E\to 0$ for $n\to \infty$. It is clear that, for $n$ large enough, $(T(x_i^n))_{i<k}$ are linearly independent, so the mapping $T_n: \mbb F^ k\to E$, $T_n(u_i)= x_i^n$, belongs to $\mc L^ k(\mbb F^k ,E)$ and   satisfies that $d_{X,E}(T_n-T)\to 0$ for $n\to \infty$, where $X=(\mbb F^k, \nu_{\mbb F^k, E}(T))$.  It follows from the continuity of $\nu_{X,E}$ that $\nu_{X,E}(T_n)\to \nu_{X,E}(T)$, so $\tau_{k,E}(V_n) \to_n \tau_{k,E}(V)$ for $n\to \infty$.

 Suppose that $\tau_{k,E}(V)=\tau_{k,E}(W)$. By the approximately ultrahomogeneity of $E$, for a given $\vep>0$ we can find  an isometry $g\in \Aut(E)$ such that $\La_E(V, g\cdot W)<\vep$, and hence $V\in [W]$.  The fact that  $\widehat\tau_{k,E}$ is a homeomorphism  follows from   {\it a)}.

{\it c)}: We start with the continuity of $\nu^2_{k,E}$. Suppose that $T_n\to_n T$ in norm. Then,     $\im (T_n)\to \im (T)$ in the opening distance $\La_E$. Now fix a basis  $(e_j)_{j<d}$ of $\im T$, and let $(x_j)_{j<k}$ be a linearly independent sequence in $E$ such that $T=T_0\circ T_1^*$, where $T_0:\mbb F^k\to E$ is  defined by $T_0(u_j)=e_j$ and $T_1: (\mbb F^k)^*\to E$  by $T_1(u_j^*)=x_j$.  For large enough $n$ choose a basis $\{e_j^n\}_{j<k}$ of $\im T_n$ such that $e_j^n\to_n e_j$   for every $j<k$.  Similarly, we define $T_0^n: \mbb F^k\to E$ , $T_0^n(u_j):= e_j^n$, and let $T_1^n:(\mbb F^k)^*\to E$ be such that $T_n=T_0^n\circ (T_1^n)^*$. Then  $T_0^n\to_n T_0$, so by continuity of $\nu_{\mbb F^k,E}$, it follows that $\nu_{\mbb F^k, E}(T_0^n)\to_n \nu_{\mbb F^k,E}(T_0)$.  On the other hand, $T_0$, $T_0^n$ are  1-1, so
$$\nrm[(T_1^n)^*-(T_1)^*]\le \nrm[T_0^{-1}]\cdot ( \nrm[T_0^n \circ (T_1^n)^*-T_0\circ T_1^* ]+   \nrm[T_0^n  -T_0]\nrm[ (T_0^n) ^{-1} ] \cdot \nrm[T] ). $$
This implies that $T_1^n\to_n T_1$, and $\nu_{(\mbb F^k)^*, E}(T_1^n)\to_n \nu_{(\mbb F^k)^*, E}(T_1)$. 

   Suppose that $\nu^2_{k,E}(T)=\nu^2_{k,E}(U)$.  Decompose $T=T_0\circ T_1^*$ and  $U=U_0 \circ U_1^*$  in a way that $\nu_{\mbb F^k, E}(T_0)=   \nu_{\mbb F^k, E}(U_0)$ and $\nu_{(\mbb F^k)^*, E}(T_1 )=   \nu_{(\mbb F^k)^*, E}(U_1)$.  As in the proof of {\it 1)}, we can find $g,h\in\iso(E)$  such that $\nrm[g\circ T_0 - U_0]\le \vep /(2 \nrm[T_1])$ and  $\nrm[h\circ T_1 - U_1]\le \vep/(2 \nrm[U_0])$. Hence,
 \begin{align*}
 \nrm[g \circ T\circ h^* - U]\le & \nrm[g \circ T_0 -U_0] \cdot\nrm[T_1] + \nrm[h\circ T_1  -U_1]\cdot \nrm[U_0] \le \vep.
 \end{align*}
 Since $\vep>0$ is arbitrary, $[U]=[T]$.  We see now that $\wt \nu^2_{k, E}$ is a homeomorphism.   Suppose that $\widetilde \nu^2_{k, E}([T_n])\to_n \widetilde \nu^2_{k, E}([T])\in \mc D_k(E)$. Our goal is to find a   subsequence of $([T_n])_n$ that converges to $[T]$: We  first decompose $T=T_0\circ T_1^*$, with $T_0 \in \mc L^k(\mbb F^k,E)$ and $T_1\in\mc L^k((\mbb F^k)^*,E)$,         a subsequence $(T_{n_m})_m$ and decompositions    $T_{n_m}=T_0^m\circ T_1^m$   in a way  that both $\om(\nu_{\mbb F^k,E} (T_0^m), \nu_{\mbb F^k,E} (T_0)) <m^{-1}$ and $ \om(\nu_{(\mbb F^k)^*,E} (T_1^m), \nu_{(\mbb F^k)^*,E} (T_1)) <m^{-1}$ for every $m\in \N$. It follows from {\it a)}  that  $[T_0^m]\to [T_0]$ and $[T_1^m]\to_m [T_1]$. This easily implies that $[T_{n_m}]\to_m [T]$.  The fact that $\mk d_{k,\wh E}$ is uniformly equivalent to $\wt{\om}_2$ on $\wt\om_2$-bounded sets follows from the Heine-Borel property of $(\mc D_k, \wt \om_2)$. 
\end{proof}

We finish with the following fact on bounded sets considered before.    
\begin{lemma}\label{jiowioerio43r446576}
  Suppose that $X=(X,\mtt m)$ is a normed space with $\dim X=k$, $E$ is a Banach space and  $\la\ge 1$. We have that 
   $\nu_{k,E}^2(\mc L_{\la}^{k,\mr{w}^*}(E^*,E))=\mc D_{k}(E;\la)$ and  $\nu_{k,E}^2(\mc L_{\sma\la}^{k,\mr{w}^*}(E^*,E))=\mc D_{k}(E; {\sma\la})$. 
 \end{lemma}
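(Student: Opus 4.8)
The plan is to reduce the whole lemma to a single explicit computation of operator norms. Recall that an element $T\in \mc L^{k,\mr{w}^*}(E^*,E)$ comes with a full rank decomposition $T=T_0\circ T_1^*$, where $T_0\colon \mbb F^k\to E$ and $T_1\colon (\mbb F^k)^*\to E$ have rank $k$; writing $\mtt m_0:=\nu_{\mbb F^k,E}(T_0)\in\mc N_{\mbb F^k}(E)$ and $\mtt m_1:=\nu_{(\mbb F^k)^*,E}(T_1)\in\mc N_{(\mbb F^k)^*}(E)$ we have $\nu^2_{k,E}(T)=[(\mtt m_0,\mtt m_1)]$, and this does not depend on the chosen decomposition (well-definedness of $\nu^2_{k,E}$, established above). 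The first step I would carry out is to prove the identity
\[ \max\{\nrm[T],\nrm[T^{-1}]\}=e^{\om(\mtt m_0^*,\mtt m_1)}. \]
Granting this, both set equalities are immediate. For the inclusion $\con$, given $T\in \mc L^{k,\mr{w}^*}_{\la}(E^*,E)$, the maps $T_0,T_1$ witness $\mtt m_0\in\mc N_{\mbb F^k}(E)$ and $\mtt m_1\in\mc N_{(\mbb F^k)^*}(E)$, and the identity together with $\nrm[T],\nrm[T^{-1}]\le\la$ reads off $\om(\mtt m_0^*,\mtt m_1)\le\log\la$, i.e. $\nu^2_{k,E}(T)=[(\mtt m_0,\mtt m_1)]\in\mc D_k(E;\la)$. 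For the reverse inclusion, given $[(\mtt m_0,\mtt m_1)]\in\mc D_k(E;\la)$, pick $T_0\in\Emb((\mbb F^k,\mtt m_0),E)$ and $T_1\in\Emb(((\mbb F^k)^*,\mtt m_1),E)$ (which exist by definition of $\mc N_\cdot(E)$), form $T:=T_0\circ T_1^*\in\mc L^{k,\mr{w}^*}(E^*,E)$ (a rank $k$ operator, with $\nu^2_{k,E}(T)=[(\mtt m_0,\mtt m_1)]$), and use the identity to get $\max\{\nrm[T],\nrm[T^{-1}]\}=e^{\om(\mtt m_0^*,\mtt m_1)}\le\la$, so $T\in\mc L^{k,\mr{w}^*}_{\la}(E^*,E)$. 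The two statements for $\mc L^{k,\mr{w}^*}_{\sma\la}$ and $\mc D_k(E;{\sma\la})$ follow verbatim, replacing every ``$\le$'' by ``$<$''.

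For the identity itself I would argue as follows. Since $T_1\colon ((\mbb F^k)^*,\mtt m_1)\to E$ is an isometric embedding, its adjoint $T_1^*\colon E^*\to ((\mbb F^k)^*,\mtt m_1)^*=(\mbb F^k,\mtt m_1^*)$ carries $\ball(E^*)$ onto $\ball((\mbb F^k,\mtt m_1^*))$, by Hahn--Banach (and finite-dimensionality of the target). Hence $T(\ball(E^*))=T_0(\ball((\mbb F^k,\mtt m_1^*)))$, while $\im T=T_0(\mbb F^k)$ has unit ball $T_0(\ball((\mbb F^k,\mtt m_0)))$ because $T_0$ is an isometric embedding; since $T_0$ is injective this yields $\nrm[T]=\nrm[\id]_{(\mbb F^k,\mtt m_1^*),(\mbb F^k,\mtt m_0)}$ and $\nrm[T^{-1}]=\nrm[\id]_{(\mbb F^k,\mtt m_0),(\mbb F^k,\mtt m_1^*)}$, so $\max\{\nrm[T],\nrm[T^{-1}]\}=e^{\om(\mtt m_0,\mtt m_1^*)}$. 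To finish, I would invoke that the dual map $\cdot^*\colon(\mc N_{\mbb F^k},\om)\to(\mc N_{(\mbb F^k)^*},\om)$ is a bijective isometry and that $(\mtt m_1^*)^*=\mtt m_1$ in finite dimensions, whence $\om(\mtt m_0,\mtt m_1^*)=\om(\mtt m_0^*,(\mtt m_1^*)^*)=\om(\mtt m_0^*,\mtt m_1)$, which is exactly the quantity governing membership in $\mc D_k(E;\la)$.

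I do not expect serious obstacles here. The point requiring the most care is getting the two one-sided operator-norm computations exactly right — in particular that $\nrm[T^{-1}]$, defined through the condition $\ball(\im T)\con a\,T(\ball(E^*))$, turns into $\nrm[\id]$ in the direction opposite to the one for $\nrm[T]$ — and then matching the definition of $\mc L^{k,\mr{w}^*}_{\la}$ (phrased via $\nrm[T],\nrm[T^{-1}]\le\la$) with that of $\mc D_k(E;\la)$ (phrased via an $\om$-inequality), which the identity above does once the logarithm is kept straight. It is worth remarking, though not needed for the proof, that an operator of the form $T_0\circ T_1^*$ is automatically $\mr{w}^*$-to-norm continuous, since adjoints are $\mr{w}^*$-to-$\mr{w}^*$ continuous and on $\mbb F^k$ the $\mr{w}^*$ and norm topologies coincide; and that the hypothesis ``$\dim X=k$'' on the auxiliary space $X$ plays no role beyond fixing the integer $k$.
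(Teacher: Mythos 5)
Your proof is correct and follows essentially the same route as the paper's: your Hahn--Banach step (the adjoint of an isometric embedding maps $\ball(E^*)$ exactly onto the dual ball, so $T_1^*(\ball(E^*))=\ball(\mbb F^k,\mtt m_1^*)$) is precisely the paper's Claim that $(\nu_{V,E}(T))^*$ is the quotient norm through $T^*$, and your two operator-norm computations correspond to the paper's two inequality checks for $\nrm[T]$ and $\nrm[T^{-1}]$. Packaging everything as the exact identity $\max\{\nrm[T],\nrm[T^{-1}]\}=e^{\om(\mtt m_0^*,\mtt m_1)}$ is a mild but genuine improvement, since it delivers the inclusion $\nu^2_{k,E}(\mc L^{k,\mr{w}^*}_{\la}(E^*,E))\con \mc D_k(E;\la)$ at no extra cost, a direction the paper's written argument leaves implicit.
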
  

\prue
 
We will use the following simple fact.
\clam \label{useful_dual}  
If  $V$ is a vector space with $\dim V=k$,   then 
$(\nu_{V, E}(T))^*(f)= \min\conj{\nrm[g]_{E^*}}{T^*(g)=f}$ for every $T\in \mc L^k(X,E)$ and $f\in X^*$.
\fclam
\prucl	
We know that $T:Y:=(V,\mtt n)\to E$  is an isometry  for $\mtt n:=\nu_{V,E}(T)$.  Fix $f\in X^*$, set  $Z:=(T(Y),\nrm_E)$,  and  $U: Z\to Y$ be the inverse of $T$. Let $g_0\in Z^*$ be such that $U^*(g_0)= f$, and let $g\in E^*$ be such that $\nrm[g]=\nrm[g_0]$. It is easily seen that $T^*(g)= f$, and since $\nrm[T^*]_{E^*,Y^*}=\nrm[T]_{Y,E}=1$, we obtain the desired equality. 
\fprucl
Fix $[(\mtt m_0,\mtt m_1)]\in \mc D_k(E;\la)$, and choose $T_0\in \mc L^k (\mbb F^k,E)$ and $T_1\in \mc L^k((\mbb F^k)^*,E)$ such that $\nu_{\mbb F^k,E}(T_0)=\mtt m_0$ and  $\nu_{(\mbb F^k)^*,E}(T_1)=\mtt m_1$. We claim that $T_0\circ T_1^*\in \mc L_\la^{k,\mr{w}^*}(E^*,E)$. Given $\nrm[g]_{E^*}=1$,
 $$\nrm[T_0 (T_1^*(g))]_E=\mtt m_0(T_1^*(g))\le \la \mtt m_1^*(T_1^*(g))\le  \la\nrm[g]_{E^*},$$ where the last inequality holds by Claim  \ref{useful_dual}.   Now suppose that $\nrm[T_0 (T_1^*(g))]_E\le \la^{-1}$. It follows that
 $\mtt m_0(T_1^*(g))\le \la^{-1}$, so $\mtt m_1^*(T_1^*(g))\le 1$. Hence, by Claim \ref{useful_dual}, there is $h\in E^*$ such that $T_1^*(h)=T_1^*(g)$ and $\nrm[h]_{E^*}\le 1$. This implies that $\mr{Ball}({\mr{Im}(T_0\circ T_1^*)})\con \la\cdot (T_0\circ T_1^*)(\mr{Ball}({E^*}))$. Similarly one shows that $\nu_{k,E}^2(\mc L_{\sma\la}^{k,\mr{w}^*}(E^*,E))=\mc D_{k}(E; {\sma\la})$.
 \fprue

\appendix
\section{Extrinsic metrics for $p=\infty$}  
The case $p=\infty$ is special because the  Fraïssé limit that corresponds to $\ell_\infty^\infty$  is a universal space, the Gurarij  space $\mbb G$. We are going to see that the $\mbb G$-extrinsic metrics are Lipschitz equivalent to the intrinsic ones on bounded sets. We start by analyzing $\partial_{X,\mbb G}$. Given a finite dimensional normed space $X=(X,\nrm_X)$, another compatible, more  geometrical, metric on  $\mc N_X$ is the next. Having in mind that a norm is completely determined by its dual unit ball, let  
    $$\alpha _{X}(\mtt m,\mtt n):=d_{\mathcal{H},\nrm_{X^{\ast}}}(\mr{Ball}({(X,\mtt m)^*}),\mr{Ball}({(X,\mtt n)^*})),$$
     where   $d_{\mathcal{H},\nrm_{X^{\ast }}}(\cdot,\cdot )$ is the Hausdorff distance with respect to the norm distance induced by $\nrm_{X^*}$. In other words, $\al_X(\mtt m, \mtt n)$ measures the $d_{\nrm_X^*}$-distance between the   unit balls of $(X^*,\mtt m^*)$ and of  $(X^*,\mtt n^*)$.  In the next  we write $\mr{Sph}(X)=\conj{x\in X}{\nrm[x]_X=1}$ to denote the unit sphere of $X$.

\prop \label{partial_is_alpha_gurarij}
Let  $X=(X,\nrm_X)$ be a finite dimensional normed space and let $\mtt m,\mtt n\in \mc N_X$.
\begin{enumerate}[a)]
\item If  $\mtt m,\mtt n\in \mc N_X(\ell_\infty^\infty)$, then   $\partial_{X,\ell_\infty^\infty}(\mtt m,\mtt n)= \alpha _{X}(\mtt m,\mtt n)$.  Consequently, in general, $\partial_{X,\mbb G}(\mtt m,\mtt n)= \alpha _{X}(\mtt m,\mtt n)$.
\item   If $\mtt m,\mtt n \in B_\om[\nrm_X;\la]$,  then $\la^{-1}\cdot\omega(\mtt m,\mtt n)\le \alpha_X(\mtt m,\mtt n)  \le \la \cdot
\omega(\mtt m,\mtt n)$.
   
\end{enumerate}   
 
\fprop

\begin{proof}
{\it a)}: Fix $\vep>0$, and let $T,U\in \mc L(X,\ell_\infty^\infty)$ be such that $\nu_{X,\ell_\infty^\infty}(T)=\mtt m$ and $\nu_{X,\ell_\infty^\infty}(U)=\mtt n$, and  $\nrm[T-U]_{X,\ell_\infty^\infty}\le \partial_{X,\ell_\infty^\infty}(\mtt m,\mtt n)+\vep$.  Given   $f\in \mr{Ball}({(X,\mtt m)^*})$, if $g\in \mr{Ball}({(\ell_\infty^\infty)^*})$ is such that $T^*(g)=f$, then
$d_{X^*}(f, \mr{Ball}({(X,\mtt n)^*})) \le \nrm[f - U^*(g)]_{X^*}\le \nrm[T^*-U^*]\le \partial_{X,\ell_\infty^\infty}(\mtt m,\mtt n)+\vep$.  We  show that $\partial_{X,\ell_\infty^\infty}(\mtt m,\mtt n)\le \al_X(\mtt m,\mtt n)$:    Choose $n\in \N$ such that $\im T, \im U\con \langle u_j \rangle_{j<n}$. We identify canonically $\langle u_j \rangle_{j<n}$ with the induced $\sup$-norm with  $\ell_\infty^n$. For every $0\le  j< n$ choose $f_{j},g_{j}\in \mr{Ball}(\ell_1^n)$ such
that $\Vert T^{\ast }(u_{j}^{\ast})-U^{\ast }(g_{j})\Vert _{X^{\ast}}=  d_{X^{\ast }}(T^{\ast }(u_{j}^{\ast }),\ball((X,\mtt m)^{\ast }))$ and $\Vert U^{\ast }(u_{j}^{\ast })-T^{\ast }(f_{j})\Vert _{X^{\ast}}= d_{X^{\ast }}(U^{\ast }(u_{j}^{\ast }),\ball((X,\mtt n)^{\ast }))$.
Let $\xi ,\eta \in \mathrm{Emb}(\ell_\infty^n,\ell_\infty^{2n})$ be defined dually by $\xi ^{\ast }(u_{j}^{\ast
}):=u_{j}^{\ast }$, $\xi ^{\ast }(u_{n+j}^{\ast }):=f_{j}^{\ast }$ , $\eta ^{\ast }(u_{j}^{\ast }):=g_{j}$ and $\eta ^{\ast }(u_{n+j}^{\ast
})=u_{j}^{\ast }$ for $0\leq j< n$. Then,
\[\partial_{X,\ell_\infty^\infty}(\mtt m,\mtt n)\le \nrm[\xi \circ T- \eta\circ U]_{X,\ell_\infty^{2n}} =\nrm[T^* \circ \xi^*- U^*\circ \eta^*]_{\ell_1^{2n},X^*}=\al_X(\mtt m,\mtt n).      \]

{\it b)}:  
As  $\omega(\mtt m,\mtt n)=\omega(\mtt m^*,\mtt n^*)$, it suffices to prove   $\la^{-1}\cdot\omega(\mtt m,\mtt n)\le d_{\mathcal{H},\nrm_{X}}(\ball(X,\mtt m),\ball(X,\mtt n))  \le \la \cdot \omega(\mtt m,\mtt n)$ provided that  $\om(\nrm_X, \mtt m),\om(\nrm_X,\mtt n)\le \la$. We  assume that $d_{\mathcal{H},\nrm_X}(\mtt m,\mtt n)>0$.  Let us show the first inequality. Without of generality we assume that there is $x\in \mr{Sph}(X,\mtt m)$ such that $0<d_{\mathcal{H},\nrm_X}(\ball(X,\mtt m),\ball(X,\mtt n))=d_{X}(x, \ball(X,\mtt n))$.  Then $\mtt n(x)>1$ and
$d_{\mathcal{H},\nrm_X}(\mtt m,\mtt n)\le  \nrm[x- {x}/\nrm[x]_X]_X\le \la | 1-
{\mtt n(x)^{-1}}| =\la(1-{\mtt n(x)^{-1}}) \le
\la(1-\exp({-\omega(\mtt m,\mtt n)}))\le \la\omega(\mtt m,\mtt n)$. Suppose now that $\mtt m(x)=1$. Let $y\in X$ be such that $\mtt n(y)\le 1$ and $\nrm[x-y]_X\le d_{\mathcal{H},\nrm_X}(\mtt m,\mtt n)$. It follows that
$\mtt n(x)\le \mtt n(y)+\mtt n(x-y)\le 1+ \la \nrm[x-y]_X\le 1+\la d_{\mathcal{H},\nrm_X}(\mtt m,\mtt n) \le \exp({\la\cdot d_{\mathcal{H},\nrm_X}(\mtt m,\mtt n)})$. Consequently, $\omega(\mtt m,\mtt n)\le \la d_{\mathcal{H},\nrm_X}(\mtt m,\mtt n)$.
\end{proof}

 We see now that $\ga_\mbb G$ is Lipschitz equivalent to the Banach-Mazur metric on $\mc B_k$.

\cor
 $d_\mr{BM}$ and $\ga_{k,\mbb G}$ are Lipschitz equivalent on $\mc B_k$. In fact, for  $\mtt m,\mtt n\in \mc N_k$,
 $$\frac1{4 k \log k } d_\mr{BM}([\mtt m],[\mtt n])\le \ga([\mtt m],[\mtt n])\le (\log k) d_\mr{BM}([\mtt m],[\mtt n]).$$
\fcor
\prue  We start with the following.
\clam
 $d_\mr{BM}([\mtt m],[\mtt n]) \le 4 k \log k \ga_{k,E}([\mtt n],[\mtt n])$  for every Banach space $E$ and  $\mtt m,\mtt n\in \mc N_k(E)$.
\fclam
\prucl
Suppose  that $\ga_{k,E}([\mtt m],[\mtt n])<1/(3  k)$. Let $V,W\in \mr{Gr}(k,E)$ be such that $\tau_{k,E}(V)=[\mtt m]$, $\tau_{k,E}(W)=[\mtt n]$, and $\ga_E([\mtt m],[\mtt n])=\La_E(V,W)$.  Let
$(x_j)_{j<k}$ be an Auerbach basis of $(V,\nrm_E)$. For each $j<k$, let $y_j\in \mr{Ball}({(V,\nrm_E )})$ be
such that $\nrm[x_j-y_j]_E\le \La_E(V,W)$. Since
\begin{align}
\nrm[\sum_{j<k} \la_j y_j]_E   \ge   &  \nrm[\sum_{j<k} \la_j x_j]_E - \nrm[\sum_{j<k} \la_j (x_j-y_j)]_E\ge    \nrm[\sum_{j<k} \la_j x_j]_E - k \La_E(V,W)\max_{j<k}|\la_j| \ge  \nonumber  \\
\ge  & (1- k \La_E(V,W)) \nrm[\sum_{j<k} \la_j x_j]_E>0. \label{oi8978444}
\end{align}
we obtain that  $(y_j)_{j<k}$ is a basis of $W$ and $T:V\to W$, 
$T(x_j):=y_j$, $j<k$ is invertible.  In addition, from  \eqref{oi8978444} we have that $\nrm[T^{-1}]_{(W,\nrm_E), (V,\nrm_E)}\le (1- k
\La_E(V,W))^{-1}  $, and similarly, $\nrm[T]_{(V,\nrm_E),(W,\nrm_E)}\le 1+k
\La_E(V,W)$.  We use that $(1+x) / (1-x) \le \exp(9 x/4)$ for every $0\le x\le 1/3$,   to conclude   that $ d_\mr{BM}(\tau_{k,E}(V), \tau_{k,E}(W))\le ({9}/{4}) k \La_E(V,W)\le 4k \log (k) \ga_{k,E}([\mtt m],[\mtt n]).$ 
Suppose that $\La_E(V,W)\ge(3k)^{-1}$. Since the diameter of $\mathcal B_k$ is at most $\log(k)$, we
obtain that $d_\mr{BM}(\tau_{k,E}(V), \tau_{k,E}(W)) \le \log(k) \le  3k \log(k)\La_E(V,W)$.
In any case, $d_\mr{BM}(\tau_{k,E}(V), \tau_{k,E}(W)) \le  4k \log(k)\La_E(V,W)$.
\fprucl
Fix two norms $\mtt m,\mtt n\in \mc N_k$, set $X:=(\mbb F^k, \mtt m)$ and $Y:=(\mbb F^k, \mtt n)$. The following  result is a slight modification of \cite[Proposition 6.2]{Ostrovski_topologies_1994}.
\clam\label{ostrow}
 Suppose that $E$ and $F$ are two  finite-dimensional  normed spaces, and  $T:F\to G$ is a 1-1 linear operator. There is a   normed space $H$  and $I\in \Emb(F,H)$ and $J\in \Emb(G,H)$ such that:
  \begin{enumerate}[i)]
\item  If $1\le \nrm[T], \nrm[T^{ -1}]$, then    $\nrm[I -J\circ T]\le \nrm[T] \cdot \nrm[T^{-1} ] -1$.
\item If $\dim F=\dim G$  and $\nrm[T]=1$, then  $\La_H( \im I, \im J)\le  \nrm[T^{-1}]-1$.
 \end{enumerate}
\fclam
 \prucl
Fix a 1-1 linear operator $T:F\to G$. On the direct sum $F\oplus G$ we define the seminorm
$$\mtt m(x,y):=\max\left\{\nrm[\frac{Tx}{\nrm[T]}+y]_G, \max_{g\in D} \left| \frac{g}{\nrm[T]}(y) + \frac{ (T^* g)(x)  }{\nrm[T^* g]_{F^*}   }  \right|       \right\},$$
where $D$ is chosen so that $ (T^*)^{-1}(\nrm[T^{-1}]^{-1}\cdot\mr{Ext}(\mr{Ball}({F^*})))\con    D\con  \mr{Ball}(F)$, and where for a compact convex set $K$, $\mr{Ext}(K)$ is the set of extreme points of $K$.  
Let $H$ by the quotient of $F\oplus G$ by the kernel of $\mtt m$, and  let  $I:F\to H$, $J: G\to H$ be the two canonical injections   $I(x):=[(x,0)]$,  $J(y):=[(0,y)]$.   It is routine to check that $I,J$ and $H$ have the desired properties.
\fprucl
Let $T:X\to Y$ be such that $\nrm[T]\cdot \nrm[T^{-1}]=\exp(d_\mr{BM}([\mtt m],[\mtt n]))$, and without loss of generality, we assume that $\nrm[T]=1$.
 We apply Claim \ref{ostrow} to $T$, and  we obtain a normed space $Z$ and isometric embeddings $I:X\to Z$ and $J:Y\to Z$ such that (b) holds, that is, $\La_Z(\im I, \im J)\le \exp(d_\mr{BM}([\mtt m],[\mtt n]))-1$.  Since $d_{BM}([\mtt m],[\mtt n])\le \log k$, it follows that $\exp(d_\mr{BM}([\mtt m],[\mtt n]))-1\le \log k\cdot d_\mr{BM}([\mtt m],[\mtt n])$. Thus $\ga([\mtt m],[\mtt n]) \le \La_Z(X,Y) \le \log k \cdot d_\mr{BM}([\mtt m],[\mtt n])$.
\fprue

We conclude by proving that    $\mk d_{k,\mbb G}$ is Lipschitz equivalent to the following intrinsically  defined metric on $\mc D_k(\la)$.  Recall that  $\om_2$ is  the compatible metric $\om_2((\mtt m_0,\mtt m_1),(\mtt n_0,\mtt n_1)):=\om(\mtt m_0,\mtt n_0)+\om(\mtt m_1,\mtt n_1)$, and  that  $\wt\om_2$ is the corresponding quotient metric on $\mc D_k$.

\prop \label{ga_in_gurarij}
$\mk d_{k,\ell_\infty^\infty}$ and $\wt w_2$ are Lipschitz equivalent on $\mc D_k(\ell_\infty^\infty;\la)$.  In fact, 
$$\frac1{2 \log(\la k) \la \sqrt k }\wt\om_2([\mbf m],[\mbf n])  \le   \mk d_{k,\ell_\infty^\infty}([\mbf m],[\mbf n])\le k^2 \la^3 \wt\om_2([\mbf m],[\mbf n]).$$
\fprop
\prue
We first estimate the $\wt{\om}_2$-diameter of $\mc D_k(\ell_\infty^\infty,\la)$.
\clam\label{diam_of_D}
 For every $[(\mtt m_0,\mtt m_1)],[(\mtt n_0,\mtt n_1)]\in \mc D_k(\ell_\infty^\infty;{\la})$ one has that
 \begin{equation}\label{oiu43u7843y54378543543}
\wt{\om}_2([(\mtt m_0,\mtt m_1)],[(\mtt n_0,\mtt n_1)])\le 2(\log \la+ \min\{d_\mr{BM}([\mtt m_0], [\mtt n_0]), d_\mr{BM}([\mtt m_1],[\mtt n_1])\}).   
 \end{equation}  
Consequently, $\mr{diam}(\mc D_k(\ell_\infty^\infty;\la))\le 2\log( \la k )$. 
\fclam
 \prucl
 Given $[(\mtt m_0,\mtt m_1)],[(\mtt n_0,\mtt n_1)]\in \mc D_k({\ell_\infty^\infty;\la})$, $\om(\mtt m_0, \mtt m_1^*), \om(\mtt n_0,\mtt n_1^*)\le \log(\la)$.
 Choose $\De\in \mr{GL}(\mbb F^k)$  with $d_\mr{BM}([\mtt m_0],[\mtt n_0])=\om( \mtt m_0,\De\cdot  \mtt n_0)$. Then, $\om(\mtt m_1,\De\cdot \mtt n_1)\le \om(\mtt m_1, \mtt m_0^*) +\om(\mtt m_0^*,\De\cdot \mtt n_0^*) +\om(\De \cdot \mtt n_0^*,\De\cdot \mtt n_1)\le 2\log \la +d_\mr{BM}(\mtt m_0, \mtt n_0)$.  From here we get  easily the inequality in \eqref{oiu43u7843y54378543543}.
 \fprucl

\clam\label{useful_omega_2}
Let $E$ be  a Banach space. If $T,U\in \mc L_{\la}^{k,\mr{w}^*}(E^*,E)$ are such that $ \nrm[T-U]< 1/(\la\sqrt k)$, then, $\wt{\om}_2(\nu_{k,E}(T),\nu_{k,E}(U))\le  \la\sqrt k \nrm[T-U]$.
  \fclam
\prucl
 Let $T_0, U_0\in \mc L^k(\mbb F^k, E)$ and $T_1,U_1\in \mc L^k( (\mbb F^k)^*,E)$ be such that $T=T_0\circ T_1^*$ and $U=U_0\circ U_1^*$. Define $\mtt m_0:=\nu_{\mbb F^k,E}(T_0)$, $\mtt m_1:=\nu_{(\mbb F^k)^*,E}(T_1)$, and $\mtt n_0:=\nu_{\mbb F^k,E}(U_0)$, $\mtt n_1:=\nu_{(\mbb F^k)^*,E}(U_1)$.   We use the \emph{Kadets-Snobar Theorem}---see for example \cite[Theorem 12.1.6]{albiac_topics_2006}---to fix a projection $P:E\to \im(T_1)$ of norm at most $\sqrt{k}$.   Then, $T_1^*= T_1^*\circ P^*\circ r$, where $r:E^*\to (\im T_1)^*$ is the restriction map $r(g):=g\upharpoonright  {\im T_1}$, hence, the rank of $T_1^{*}\rest  {\im P^*}$ is $k$ and since the dimension of $\im P^*$ is also $k$, it follows that $\theta:=T_1^*:   \im P^* \to (\mbb F^k, \mtt m_0)$ is an isomorphism. We estimate some norms.

\begin{enumerate}[$\bullet$]
\item   $\nrm[\theta]\le \la$ and $\nrm[\theta^{-1}]\le  \la\sqrt{k} $:  $\nrm[\theta]\le \nrm[T_1^*]_{E^*, (\mbb F^k, \mtt m_0)}=\nrm[T ]_{E^*,E}\le \la$.  
Now fix $g\in \im P^*$, and suppose that  $1=\mtt m_0(T_1^*(g))=\nrm[T(g)]_E$.
Find $h\in E^*$ with $\nrm[h]_{E^*}\le \la$ such that $T^*(h)=T^*(g)$; then $\theta(g)=T_1^*(g)=T_1^*(h)=T_1^*(h_0)=\theta(h_0)$, where $h_0=P^*(h\upharpoonright  {\im T_1})$. Since $\theta$ is a bijection, $g=h_0$ and $\nrm[g]_{E^*}\le \nrm[P^*]\nrm[h]_{E^*} \le \la\sqrt{k} $. 

\item    $U_1^*: \im P^* \to (\mbb F^k, \mtt n_0)$ is also isomorphism: First, $\nrm[U_1^*]_{E^*,(\mbb F^k,\mtt n_0)}=\nrm[U]_{E^*,E}\le \la$, and if $g\in \mr{Sph}(\im P^*)$, then by (a), $\mtt m_0(T_1^*(g))\ge 1/(\la \sqrt{k})$, and 
$|\mtt n_0(U_1^*(g))-\mtt m_0(T_1^*(g))|=  |\nrm[Tg]_E-\nrm[U(g)]_E|\le \nrm[T-U]<{1}/({\la\sqrt k}) $.
So,  $\mtt n_0(U_1^*(g))>   0$, hence $U_1^*(g)\neq 0$, and since $\dim \im P^*=k$,  it follows that $U_1^*$ is an isomorphism. Set  $\De:= U_1^*\circ (T_1^*)^{-1} \in \mr{GL}(\mbb F^k)$.

\item  $\De \cdot  \mtt m_1=\mtt n_1$:  If $f\in (\mbb F^k)^*$, then   $(\De\cdot \mtt m_1)(f)= \mtt m_1 (\De^*(f))=\nrm[T_1(T_1^{-1} (U_1(f)))]_E=\nrm[U_1(f)]_E=\mtt n_1(f)$.
\item $ \om(\De\cdot \mtt m_0,  \mtt n_0)\le  \la \sqrt k \nrm[T-U]$:     Fix $x\in \mbb F^k$ such that $\mtt m_0(x)=1$, and set $g:=(U_1^*)^{-1}(x)$.  
Notice that $\nrm[g]_{E^*}\le \la\sqrt k$. Then
$|\mtt m_0(\De^{-1}(x))- \mtt n_0(x)|= | \nrm[T_0(T_1^*(g))]_E - \nrm[U_0(U_1^*(g)]_E|\le \nrm[T-U]_{E^*,E}\nrm[g]_{E^*}\le \la\cdot \sqrt k \cdot \nrm[T-U]_{E^*,E}.$
So, $(\De\cdot\mtt m_0)(x)\le (\la\cdot \sqrt k \cdot \nrm[T-U] +1) \mtt n_0(x)$. Similarly,  $\mtt n_0(x)\le (\la\cdot \sqrt   k \cdot \nrm[T-U] +1) (\De\cdot \mtt m_0)(x)$. It follows that
\begin{align*}\pushQED{\qed}
\wt{\om}_2([(\mtt m_0,\mtt m_1)],[(\mtt n_0,\mtt n_1)])\le & {\om}_2(\De\cdot(\mtt m_0,\mtt m_1),(\mtt n_0,\mtt n_1))= \om(\De\cdot \mtt m_0, \mtt n_0) \le \\
\le & \log (\la\cdot \sqrt k \cdot \nrm[T-U] +1) \le \la\cdot \sqrt k \nrm[T-U].    \qedhere
\end{align*}
\end{enumerate}

\fprucl 
From the previous two  claims  we have that 
\(\wt\om_2([\mbf m],[\mbf n])  \le 2 \log(\la k) \la \sqrt k
\mk d_{k,E}([\mbf m],[\mbf n])\)  for  $[\mbf m],[\mbf n]\in \mc D_k(E;\la)$.
 \clam
For every $[\mbf m],[\mbf n]\in \mc D_k(\ell_\infty^\infty;\la)$ one has that $\mk d_{k,\ell_\infty^\infty}([\mbf m],[\mbf n])\le k^2 \la^3
 \wt\om_2([\mbf m],[\mbf n])$.
\fclam
\prucl
Let $\De\in \mr{GL}(\mbb F^k)$ be such that $\wt\om_2([(\mtt m_0,\mtt m_1)],[(\mtt n_0,\mtt n_1)])= \om(\mtt m_0, \De\cdot \mtt n_0)+ \om(\mtt m_1, \De\cdot \mtt n_1)$. We use Proposition \ref{partial_is_alpha_gurarij}  to find $T_0,U_0\in \mc L(\mbb F^k,\ell_\infty^\infty)$, $T_1,U_1\in \mc L((\mbb F^k)^*,\ell_\infty^\infty)$ such that:
\begin{enumerate}[$\bullet$]
\item $\mtt m_0=\nu_{\mbb F^k,\ell_\infty^\infty}(T_0)$, $\De\cdot \mtt n_0=\nu_{\mbb F^k,\ell_\infty^\infty}(U_0)$, $\mtt m_1=\nu_{(\mbb F^k)^*,\ell_\infty^\infty}(T_1)$ and $\De\cdot \mtt m_1=\nu_{(\mbb F^k)^*,\ell_\infty^\infty}(U_1)$;
\item  $\al_{(\mbb F^k, \mtt m_1^*)}(\mtt m_0,\De\cdot \mtt n_0)=\nrm[T_0-U_0]_{(\mbb F^k, \mtt m_1^*), \ell_\infty^\infty}$ and
$\al_{((\mbb F^k)^*, \mtt m_0^*)}(\mtt m_1,\De\cdot \mtt n_1)=\nrm[T_1-U_1]_{((\mbb F^k)^*, \mtt m_0^*), \ell_\infty^\infty}$.
\end{enumerate}
Let $T:= T_0\circ T_1^*$, $U:=U_0\circ U_1^*$. It follows that $\nu^2([T])=[\mbf m]$, $\nu^2([U])=[\mbf n]$, and $\mk d_{k,\ell_\infty^\infty}([\mbf m], [\mbf n])\le \nrm[T-U]_{\ell_1^\infty,\ell_\infty^\infty}$. Now let $g\in \mr{Sph}(\ell_1^\infty)$. Then,
\begin{align}
\nrm[(T-U)(g)]_{\ell_\infty^\infty}\le & \nrm[T_0 (T_1^*-U_1^*)(g)]_{\ell_\infty^\infty} +\nrm[(T_0-U_0)(U_1^*(g))]_{\ell_\infty^\infty} \le  
\  \nrm[T_0]_{(\mbb F^k, \mtt m_0),\ell_\infty^\infty} \nrm[T_1^*-U_1^*]_{\ell_1^\infty, (\mbb F^k, \mtt m_0)} + \nonumber \\
+&  \nrm[T_0-U_0]_{(\mbb F^k, \mtt m_1)^*,\ell_\infty^\infty}\nrm[U_1^*]_{\ell_1^\infty, (\mbb F^k, \mtt m_1)^*}=   
 \al_{(\mbb F^k, \mtt m_0)^*}(\mtt m_1,\De\cdot \mtt n_1)+\al_{(\mbb F^k, \mtt m_1)^*}(\mtt m_0,\De\cdot \mtt n_0) \label{kjnjkuygyuyue}
\end{align}
 On the other hand, by  Claim \ref{diam_of_D},
 \begin{align*}
 \max \{ \nrm[\id]_{\De\cdot \mtt n_1,\mtt m_0^*}, \nrm[\id]_{\De\cdot \mtt n_0,\mtt m_1^*} \}\le &
 \max\{\nrm[\id]_{\mtt m_1,\mtt m_0^*}\cdot  \nrm[\id]_{\De\cdot \mtt n_1,\mtt m_1},  \nrm[\id]_{\mtt m_0,\mtt m_1^*}\cdot \nrm[\id]_{\De\cdot \mtt n_0,\mtt m_0}  \} \le \\
 \le & \la e^{\wt\om_2([\mbf m], [\mbf n])}\le k^2 \la^3.
 \end{align*}
It follows from  the inequality in \eqref{kjnjkuygyuyue} and  Proposition  \ref{partial_is_alpha_gurarij} {\it b)} that
\[\pushQED{\qed}
  \mk d_{k,\ell_\infty^\infty}([\mbf m],[\mbf n])\le \nrm[T-U]_{\ell_1^\infty,\ell_\infty^\infty}\le   k^2 \la^3 ( \om( \mtt m_1,\De\cdot \mtt n_1)+\om(\mtt m_0,\De\cdot \mtt n_0 )) \le k^2 \la^3 \wt\om_2([\mbf m],[\mbf n]). \qedhere
\popQED  \popQED 
\]\let\qed\relax
 \fprucl 
\let\qed\relax

\fprue

We do not know  a similar explicit description  of the extrinsic metrics for $p$'s other than $ \infty$.

  \nocite{spencer_1979}
\nocite{nesetril_handbook}

\bibliographystyle{amsplain}
\bibliography{bibliography}

\end{document}